\newcommand{\impli}{\Rightarrow}
\newcommand{\Nat}{\mathbb{N}}
\newcommand{\N}{\mathbb{N}}
\newcommand{\erre}{\mathbb{R}}
\newcommand{\sub}{\subseteq}
\newcommand{\tenp}{\hat\otimes_\pi}
\newcommand{\WDP}{\mathcal{WDP}}
\def\epsilon{\varepsilon}
\newtheorem{theo}{Theorem}[section]
\newtheorem{lem}[theo]{Lemma}
\newtheorem{pro}[theo]{Proposition}
\newtheorem{cor}[theo]{Corollary}
\newtheorem{defi}[theo]{Definition}
\newtheorem{rem}[theo]{Remark}
\newtheorem{exa}[theo]{Example}
\newtheorem{theorem}[theo]{Theorem}
\newtheorem{proposition}[theo]{Proposition}
\theoremstyle{definition}
\newcommand{\iten}{\ensuremath{\widehat{\otimes}_\varepsilon}}
\newcommand{\pten}{\ensuremath{\widehat{\otimes}_\pi}}
\numberwithin{equation}{section}
\title{Topological properties in tensor products of Banach spaces}
\author[A. Avil\'es]{Antonio Avil\'es}
\address{Universidad de Murcia, Departamento de Matem\'{a}ticas, Facultad de Matem\'{a}ticas, 30100 Espinardo (Murcia), Spain.}
\email{avileslo@um.es}
\author[G. Mart\'inez-Cervantes]{Gonzalo Mart\'inez-Cervantes}
\address{Universidad de Alicante, Departamento de Matem\'aticas, Facultad de Ciencias, 03080 Alicante, Spain.}
\email{gonzalo.martinez@ua.es}
\author[J. Rodr\'iguez]{Jos\'e Rodr\'{i}guez}
\address{Departamento de Ingenier\'{i}a y Tecnolog\'{i}a de Computadores,
Facultad de Inform\'{a}tica, Universidad de Murcia, 30100 Espinardo (Murcia), Spain.}  
\email{joserr@um.es}
\author[A. Rueda Zoca]{Abraham Rueda Zoca}
\address{Universidad de Murcia, Departamento de Matem\'{a}ticas, Facultad de Matem\'{a}ticas, 30100 Espinardo (Murcia), Spain.}
\email{abraham.rueda@um.es}
\subjclass[2020]{46B26, 46B28, 46B50}
\keywords{Projective tensor product; injective tensor product; property~(C); weakly Lindelöf determined Banach space; weakly compactly generated
Banach space}
\thanks{The research is partially supported by grants MTM2017-86182-P 
(funded by MCIN/AEI/10.13039/501100011033 and ``ERDF A way of making Europe'') and 
20797/PI/18 (funded by {\em Fundaci\'on S\'eneca}).
A. Rueda Zoca was also partially supported by: grant PGC2018-093794-B-I00 
(funded by MCIN/AEI/10.13039/501100011033 and ``ERDF A way of making Europe''), 
grant FJC2019-039973 (funded by MCIN/AEI/10.13039/501100011033) and grants A-FQM-484-UGR18 and FQM-0185 (funded by {\em Junta de Andaluc\'{i}a}).}
\begin{document}

\begin{abstract}
Given two Banach spaces~$X$ and~$Y$, we analyze when the projective tensor product $X\pten Y$ 
has Corson's property~(C) or is weakly Lindelöf determined (WLD), subspace of a weakly compactly generated (WCG) space
or subspace of a Hilbert generated space. For instance, we show that: (i)~$X\pten Y$ is WLD if and only if both $X$ and $Y$ are WLD
and all operators from $X$ to~$Y^*$ and from $Y$ to~$X^*$ have separable range; (ii) $X\pten Y$ is subspace of a WCG space
if the same holds for both $X$ and~$Y$ under the assumption that every operator from $X$ to~$Y^*$ is compact;
(iii) $\ell_p(\Gamma)\pten \ell_q(\Gamma)$ is subspace of a Hilbert generated space for any $1< p,q<\infty$ such that $1/p+1/q<1$
and for any infinite set~$\Gamma$.
We also pay attention to the injective tensor product $X\iten Y$. In this case, the stability of property~(C) and 
the property of being WLD turn out to be closely related to the condition that all regular Borel probability measures on the dual ball have countable Maharam type. 
Along this way, we generalize a result of Plebanek and Sobota that if $K$ is a compact space such that $C(K\times K)$ has property~(C), then 
all regular Borel probability measures on~$K$ have countable Maharam type. This generalization provides a consistent negative answer to a question of
Ruess and Werner about the preservation of the $w^*$-angelicity of the dual unit ball under injective tensor products.   
\end{abstract}

\maketitle

\section{Introduction}\label{section:Intro}

The projective tensor product $\ell_2 \pten \ell_2$ is not reflexive, because it contains an isometric copy of~$\ell_1$; in fact, 
a copy is spanned by the sequence $(e_n\otimes e_n)_{n\in \N}$, where $(e_n)_{n\in \N}$ is the usual basis of~$\ell_2$ (see, e.g., \cite[Example~2.10]{rya}). 
More generally, given two reflexive Banach spaces $X$ and~$Y$, their projective tensor product
$X \pten Y$ is reflexive whenever every operator from $X$ to~$Y^*$ is compact, and the converse holds provided $X$ or $Y$ has the approximation property
(see, e.g., \cite[Theorems~4.19 and~4.21]{rya}).
This fact and Pitt's theorem imply that, given $1<p,q<\infty$ and a non-empty index set~$\Gamma$, 
the space $\ell_p(\Gamma)\pten \ell_q(\Gamma)$ is reflexive if and only if $1/p+1/q < 1$. 
Actually, the argument for $\ell_2 \pten \ell_2$ can be adapted to deduce 
that $\ell_p(\Gamma)\pten \ell_q(\Gamma)$ contains an isometric copy of~$\ell_1(\Gamma)$ whenever
$1/p+1/q \geq 1$ (see Proposition~\ref{pro:embedding-lp-lq}), which for 
uncountable~$\Gamma$ implies that $\ell_p(\Gamma)\pten \ell_q(\Gamma)$ even fails other Banach space properties,
much weaker than being reflexive, that have been thoroughly studied over the years, like being weakly compacty generated (WCG),
weakly Lindelöf determined (WLD) or having Corson's property~(C).

The point is that it is difficult to handle weak compactness in projective tensor products. The following result goes back to
\cite[Theorem~16]{die8}:

\begin{theo}\label{theo:Diestel}
Let $X$ and $Y$ be Banach spaces such that either $X$ or $Y$ has the Dunford-Pettis property. Then
$W_X\otimes W_Y$ is relatively weakly compact in $X\pten Y$ whenever $W_X \sub X$ and $W_Y \sub Y$ are 
relatively weakly compact. Consequently, $X\pten Y$ is WCG whenever $X$ and $Y$ are WCG.
\end{theo}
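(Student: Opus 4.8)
The plan is to exploit the identification of the dual of the projective tensor product with a space of operators, namely $(X\pten Y)^*=\mathcal{L}(X,Y^*)$ under the pairing $\langle x\otimes y,T\rangle=\langle Tx,y\rangle$, and to reduce relative weak compactness to a sequential statement via the Eberlein--\v{S}mulian theorem. By the isometry $X\pten Y\cong Y\pten X$ I may assume that $X$ has the Dunford--Pettis property. Since $W_X$ and $W_Y$ are relatively weakly compact, given any sequence $(x_n\otimes y_n)_n$ in $W_X\otimes W_Y$ I can, after passing to a subsequence, assume that $x_n\to x$ weakly in $X$ and $y_n\to y$ weakly in $Y$ for some $x\in X$ and $y\in Y$. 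The crux is then to prove that $x_n\otimes y_n\to x\otimes y$ weakly in $X\pten Y$; this yields relative weak sequential compactness of $W_X\otimes W_Y$, and hence, by Eberlein--\v{S}mulian, relative weak compactness.

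To establish the weak convergence I would fix $T\in\mathcal{L}(X,Y^*)$ and decompose
\[
\langle Tx_n,y_n\rangle-\langle Tx,y\rangle=\langle T(x_n-x),y_n\rangle+\langle Tx,y_n-y\rangle.
\]
The second summand tends to $0$ because $Tx\in Y^*$ is fixed and $y_n\to y$ weakly. For the first summand I pass to the adjoint and write $\langle T(x_n-x),y_n\rangle=\langle T^*y_n,x_n-x\rangle$, where $T^*|_Y\colon Y\to X^*$ is bounded, hence weak-to-weak continuous, so that $T^*y_n\to T^*y$ weakly in $X^*$. Splitting once more,
\[
\langle T^*y_n,x_n-x\rangle=\langle T^*y_n-T^*y,x_n-x\rangle+\langle T^*y,x_n-x\rangle,
\]
the last term vanishes in the limit since $x_n-x\to 0$ weakly, and the remaining term tends to $0$ precisely by the Dunford--Pettis property of $X$, applied to the weakly null sequences $(T^*y_n-T^*y)_n$ in $X^*$ and $(x_n-x)_n$ in $X$.

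The step where the Dunford--Pettis property is genuinely needed, and which I expect to be the main obstacle, is the treatment of this first summand: for an arbitrary (not necessarily weakly compact) operator $T$, the pairing of the weakly null sequence $(T^*y_n-T^*y)_n$ in $X^*$ with the weakly null sequence $(x_n-x)_n$ in $X$ need not converge to $0$ in general, and it is exactly the sequential characterization of the Dunford--Pettis property that forces it to do so. Everything else is bookkeeping, the only delicate point being to invoke Eberlein--\v{S}mulian so that checking sequences suffices.

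Finally, for the consequence I would argue that if $X$ and $Y$ are WCG, witnessed by weakly compact sets $K_X\sub X$ and $K_Y\sub Y$ with dense linear spans, then $K_X\otimes K_Y$ is relatively weakly compact in $X\pten Y$ by the first part, so its weak closure is a weakly compact set. Its linear span contains every elementary tensor $x\otimes y$ with $x\in\operatorname{span}(K_X)$ and $y\in\operatorname{span}(K_Y)$ by bilinearity, and this algebraic tensor product of dense subspaces is dense in $X\pten Y$; hence $\overline{K_X\otimes K_Y}^{\,w}$ generates $X\pten Y$, and the latter is therefore WCG.
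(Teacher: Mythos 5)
Your proof is correct. The paper does not prove Theorem~\ref{theo:Diestel} itself (it cites Diestel), but its proof of the generalization --- Proposition~\ref{TheoAlmostDiestel} combined with Remark~\ref{rem:WDP}(iii) --- follows essentially the same route you take: an Eberlein--\v{S}mulian reduction to sequences followed by an application of the sequential Dunford--Pettis property to the weakly null sequences $(T^*(y_n-y))_n$ in $X^*$ and $(x_n-x)_n$ in $X$, the only difference being that the paper packages this step through the notion of $Y$-limited sets, whereas you apply the Dunford--Pettis property directly.
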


A more involved result by Talagrand (see \cite[Th\'{e}or\`{e}me 5.1(v)]{tal1}) states that 
if $X$ and $Y$ are weakly $\mathcal{K}$-analytic (resp., weakly $\mathcal{K}$-countably determined) Banach spaces such that either $X$ or $Y$ has the Dunford-Pettis property, then $X \tenp Y$ is weakly $\mathcal{K}$-analytic (resp., weakly $\mathcal{K}$-countably determined).

It is also natural to consider such type of questions for the injective tensor product $X\iten Y$ of two Banach spaces~$X$ and~$Y$.
While reflexivity is not preserved in general (for instance, its is easy to check that
$\ell_2\iten \ell_2$ contains an isometric copy of~$c_0$; cf. \cite[Theorem 16.73]{fab-ultimo}),
the analogue of Theorem~\ref{theo:Diestel} for injective tensor products is valid for arbitrary Banach spaces
(see \cite[Theorem~2.1]{rue-wer}), and the same can be said about Talagrand's results above (see \cite[Th\'{e}or\`{e}me 5.1(iv)]{tal1}).  
It is also worth mentioning a result of Pol saying that the space $C(K,Y)=C(K)\iten Y$ has property~(C)
whenever $K$ is Eberlein compact and $Y$ has property~(C) (see \cite[Section~4]{pol}).

In this paper we study several topological properties for the projective and injective tensor products of Banach spaces. Namely, we focus on
property~(C) and the following classes of Banach spaces: WLD spaces, subspaces of WCG spaces and subspaces of Hilbert generated spaces.  
The paper is organized as follows.

In Section~\ref{section:Preliminaries} we fix the terminology and include some preliminaries on spaces of operators, tensor products and Banach spaces.

In Section~\ref{section:C} we discuss the impact of property~(C) in projective tensor products. It turns out that
if $X$ and $Y$ are Banach spaces such that $X\tenp Y$ has property~(C)
and $X$ has the bounded approximation property or the separable complementation property, then every operator from~$X$ to~$Y^*$ 
has $w^*$-separable range (Corollary~\ref{cor:SCPcct}). As an application, we get Kalton's
result~\cite{kal74} that $\mathcal{L}(X)$ cannot be reflexive unless the Banach space $X$ is separable (Corollary~\ref{cor:Kalton}).

In Section~\ref{section:WLDpten} we analyze the property of being WLD in projective tensor products. 
A complete characterization is obtained, namely: given two Banach spaces~$X$ and~$Y$, the space
$X\pten Y$ is WLD if and only if $X$ and $Y$ are WLD and every operator from~$X$ to~$Y^*$ and from~$Y$ to~$X^*$
has (norm) separable range (Theorem~\ref{theorem:caractWLD}).
This allows to elucidate when Lebesgue-Bochner spaces $L_1(\mu,Y)=L_1(\mu)\pten Y$ are WLD (Corollary~\ref{cor:Lebesgue-Bochner}).

In Section~\ref{section:WCGpten} we consider the property of being subspace of a WCG space
in projective tensor products. In the spirit of Theorem~\ref{theo:Diestel}, we prove that  
if $X$ and $Y$ are Banach spaces such that $X$ is subspace of a WCG space, $Y$ is WCG and
either $X$ has the dual quantitative Dunford-Pettis property or $Y$ has the direct
quantitative Dunford-Pettis property of Kacena, Kalenda and Spurn\'{y}~\cite{kac-alt}
(both properties are fulfilled by all $\mathcal{L}_1$ spaces and all $\mathcal{L}_\infty$ spaces), then 
$X \pten Y$ is subspace of a WCG space (Theorem~\ref{theo:pten-subspaceWCG-qDP}).
The same conclusion holds if $X$ and $Y$ are subspaces of WCG spaces and every operator from~$X$ to~$Y^*$ is compact
(Corollary~\ref{cor:pten-subspaceWCG-qDPalloperators}).

In Section~\ref{section:Hilbertpten} we pay attention to the property of being subspace of a Hilbert generated space
in projective tensor products. We prove that, for any non-empty index set~$\Gamma$, 
the spaces $c_0(\Gamma)\pten c_0(\Gamma)$, $c_0(\Gamma) \pten \ell_q(\Gamma)$ (for any $1<q<\infty$) and 
$\ell_p(\Gamma)\pten \ell_q(\Gamma)$ for any $1<p,q<\infty$ with $1/p+1/q<1$ are subspaces of Hilbert generated spaces 
(Theorem~\ref{theo:lp-pten-lq-sHG} and Corollary~\ref{cor:c0-pten-c0-sHG}).

In Section~\ref{section:WLDiten} we address similar questions for injective tensor products. Some properties like being WCG, 
Hilbert generated or subspace of such spaces are easily seen to be stable under injective tensor products, hence
we focus on WLD spaces and property~(C). Given two Banach spaces~$X$ and~$Y$, the injective tensor product
$X\iten Y$ is shown to be WLD if and only if $X$ and $Y$ are WLD and every integral operator from~$X$ to~$Y^*$
and from~$Y$ to~$X^*$ has (norm) separable range (Theorem~\ref{theo:caraWLDinject}).
This happens if $X$ and $Y$ are WLD and either $(B_{X^*},w^*)$ or $(B_{Y^*},w^*)$ has property~(M)
(i.e., every regular Borel probability measure on it has separable support), see Corollary~\ref{cor:WLDitenM}. 
A Banach space~$X$ is WLD and $(B_{X^*},w^*)$ has property~(M) if and only if $X\iten X$ is WLD (Corollary~\ref{cor:WLDiten-noM}).
As to property~(C), we prove that if a Banach space $X$ has the bounded approximation property or the separable complementation property
and $X\iten X$ has property~(C), then every regular Borel probability measure on $(B_{X^*},w^*)$ has countable Maharam type
(Corollary~\ref{cor:PropertyCiten-MS}). This generalizes a result of Plebanek and Sobota~\cite{ple-sob}
who proved the same statement when $X=C(K)$ for some compact space~$K$. It also provides
a consistent negative answer to a question of Ruess and Werner~\cite{rue-wer} about the 
preservation of the $w^*$-angelicity of the dual unit ball under injective tensor products
(Remark~\ref{rem:CH}).  

Finally, in Section~\ref{section:Questions} we collect several open questions related to our work.

\section{Preliminaries}\label{section:Preliminaries}

Our topological spaces are assumed to be Hausdorff and our locally convex spaces and Banach spaces are assumed to be
over the real field. The cardinality of a set $\Gamma$ is denoted by~$|\Gamma|$ and the symbol
$\omega_1$ stands for the first uncountable ordinal. By a {\em compact space} we mean a compact topological space. 
Given $1<p<\infty$ we denote by $p^*$ its Hölder conjugate, i.e., $1/p+1/p^*=1$.
Given a subset $D$ of a locally convex space~$E$, the linear subspace of~$E$ generated by~$D$ is denoted by 
${\rm span}(D)$ and its closure by $\overline{{\rm span}}(D)$. We write 
${\rm co}(D)$ (resp., $\overline{{\rm co}}(D)$) to denote the convex hull (resp., closed convex hull) of~$D$.
By a {\em subspace} of a Banach space we mean a norm closed linear subspace.
The topological dual of a Banach space~$X$ is denoted by~$X^*$ and
we write $w^*$ (resp., $w$) to denote the weak$^*$-topology (resp., weak topology) on~$X^*$ (resp., $X$). 
The evaluation of $x^*\in X^*$ at $x\in X$ is denoted by either $x^*(x)$ or $\langle x^*,x\rangle$.
We write $B_X=\{x\in X:\|x\|\leq 1\}$ to denote the closed unit ball of~$X$. 
A {\em Markushevich basis} in~$X$ is a biorthogonal system $\{(x_i,x_i^*):i\in I\}\sub X\times X^*$ 
such that $X=\overline{{\rm span}}(\{x_i:i\in I\})$ and $\{x_i^*:i\in I\}$ separates the points of~$X$. 
Given two sets $C_1,C_2 \sub X$, its Minkowski sum is $C_1+C_2:=\{x_1+x_2:\, x_1\in C_1, \, x_2\in C_2\}$.

All unexplained terminology can be found in standard references
like~\cite{fab-ultimo} and \cite{fab-alt-JJ} (Banach spaces) and~\cite{rya} (tensor products). 
The survey paper \cite{ziz} is a good source of information on non-separable Banach spaces.

\subsection{Spaces of operators}\label{subsection:Operators}

Given two Banach spaces $X$ and $Z$, we write $\mathcal{L}(X,Z)$
to denote the Banach space of all operators (i.e., linear and continuous maps) from~$X$ to~$Z$,
equipped with the operator norm. The space $\mathcal{L}(X,Z)$ can be equipped with several locally 
convex topologies weaker than the norm topology. The {\em strong operator topology (SOT)}  
on $\mathcal{L}(X,Z)$ is the one for which the sets 
$$
	\{T\in \mathcal{L}(X,Z): \, \|T(x)\|<\epsilon\} 
	\quad \text{where $x\in X$ and $\epsilon>0$}
$$ 
are a subbasis of open neighborhoods of~$0$. Therefore, a net $(T_\alpha)$ in $\mathcal{L}(X,Z)$ is SOT-convergent 
to~$T\in \mathcal{L}(X,Z)$ if and only if $T_\alpha(x)\to T(x)$ in norm for every $x\in X$. If in addition 
$Z=Y^*$ for some Banach space~$Y$, then the {\em weak$^*$ operator topology (W$^*$OT)} 
on $\mathcal{L}(X,Y^*)$ is the locally convex topology for which the sets 
$$
	\{T\in \mathcal{L}(X,Y^*): \, |\langle T(x),y\rangle|<\epsilon\}
	\quad\text{where $x\in X$, $y\in Y$ and $\epsilon>0$}
$$ 
are a subbasis of open neighborhoods of~$0$. Therefore, in this case a net $(T_\alpha)$ in $\mathcal{L}(X,Y^*)$ is W$^*$OT-convergent 
to~$T\in \mathcal{L}(X,Y^*)$ if and only if $T_\alpha(x)\to T(x)$ in the weak$^*$-topology for every $x\in X$. 

We will consider the following subspaces of $\mathcal{L}(X,Z)$: 
\begin{eqnarray*}
	\mathcal{K}(X,Z) & = & \{T\in \mathcal{L}(X,Z): \, \text{$T$ is compact}\}, \\
	\mathcal{W}(X,Z) & = & \{T\in \mathcal{L}(X,Z): \, \text{$T$ is weakly compact}\}, \\
	\mathcal{DP}(X,Z) & = & \{T\in \mathcal{L}(X,Z): \, \text{$T$ is Dunford-Pettis}\}, \\
	\mathcal{S}(X,Z) & = & \{T\in \mathcal{L}(X,Z): \, \text{$T$ has separable range}\}.
\end{eqnarray*}
As usual, we write $\mathcal{L}(X)$, $\mathcal{K}(X)$ and so on to denote $\mathcal{L}(X,X)$, $\mathcal{K}(X,X)$, etc.

\subsection{The projective tensor product}\label{subsection:pten}

Given two Banach spaces~$X$ and~$Y$, we denote by $\mathcal{B}(X,Y)$
the Banach space of all continuous bilinear maps $S:X\times Y \to \erre$, equipped with the norm
$\|S\|=\sup\{|S(x,y)|:\, x \in B_X,\, y \in B_Y\}$. Each element of~$\mathcal{B}(X,Y)$ induces
a linear functional (denoted in the same way) in the algebraic tensor product $X\otimes Y$.
The {\em projective tensor product} of~$X$ and~$Y$, denoted by $X\pten Y$, is the completion
of $X\otimes Y$ when equipped with the norm
$$
	\|u\|=\sup\{|S(u)|: \, S\in \mathcal{B}(X,Y), \, \|S\|\leq 1\}, \quad u \in X\otimes Y.
$$
Thus each $S \in \mathcal{B}(X,Y)$ defines an element of $(X\pten Y)^*$ and, in fact,
this correspondence is an isometric isomorphism from $\mathcal{B}(X,Y)$ onto~$(X \pten Y)^*$.
For each $S\in \mathcal{B}(X,Y)$ we define $S_X\in \mathcal{L}(X,Y^*)$ and $S_Y\in \mathcal{L}(Y,X^*)$ by
$$
	S_X(x)(y)=S_Y(y)(x):=S(x,y) \quad
	\text{for all $x\in X$ and $y\in Y$}.
$$
The map $S \mapsto S_X$ (resp., $S\mapsto S_Y$) is an isometric isomorphism from
$\mathcal{B}(X,Y)$ onto $\mathcal{L}(X,Y^*)$ (resp., $\mathcal{L}(Y,X^*)$). Under these identifications,
the weak$^*$-topology of $(X\pten Y)^*$ coincides with the W$^*$OT-topology on bounded subsets
of $\mathcal{L}(X,Y^*)$ (resp., $\mathcal{L}(Y,X^*)$).

\subsection{The injective tensor product}\label{subsection:iten}

Let $X$ and~$Y$ be Banach spaces. For each $x^*\in X^*$ and for each $y^*\in Y^*$
we have $x^*\otimes y^*\in \mathcal{B}(X,Y)$ defined by
$$
	(x^*\otimes y^*)(x,y):=x^*(x)y^*(y)
	\quad
	\text{for all $x\in X$ and $y\in Y$.}
$$
The {\em injective tensor product} of~$X$ and~$Y$, denoted by $X\iten Y$, is the completion
of $X\otimes Y$ when equipped with the norm
$$
	\|u\|=\sup\{|(x^*\otimes y^*)(u)|: \, x^*\in B_{X^*}, \, y^*\in B_{Y^*}\}, \quad u \in X\otimes Y.
$$
The identity map on $X\otimes Y$ can be extended to an operator from $X\pten Y$ to $X\iten Y$ with norm~$1$ and dense range.
Each element of $(X \iten Y)^*$ can be identified with some $S\in\mathcal{B}(X,Y)$ for which $S_X$ (equivalently,~$S_Y$) 
is {\em Pietsch integral}, i.e., it factors as  
$$
	\xymatrix@R=3pc@C=3pc{X
	\ar[r]^{S_X} \ar[d]_{U} & Y^*\\
	L_\infty(\mu)  \ar[r]^{I}  & L_1(\mu) \ar[u]_{V} \\
	}
$$
for some finite measure~$\mu$, where $I$ is the formal inclusion operator and $U$ and $V$ are operators.
The norm of~$S$ as an element of~$(X\iten Y)^*$ is the {\em Pietsch integral norm} $\|S_X\|_{\rm int}$ of~$S_X$, 
which is defined as the infimum of the quantities $\|U\|\|V\|\mu(\Omega)$ over all factorizations as above. Clearly, $\|S_X\|\leq \|S_X\|_{{\rm int}}$.

The linear subspace of $\mathcal{L}(X,Y^*)$ (resp., $\mathcal{L}(Y,X^*)$) consisting of all Pietsch integral operators
will be denoted by $\mathcal{I}(X,Y^*)$ (resp., $\mathcal{I}(Y,X^*)$). Under the identifications above,
the weak$^*$-topology of $(X\iten Y)^*$ coincides with the W$^*$OT-topology on $\|\cdot\|_{{\rm int}}$-bounded subsets
of $\mathcal{I}(X,Y^*)$ (resp., $\mathcal{I}(Y,X^*)$).

\subsection{Weakly compactly generated and Hilbert generated spaces}\label{subsection:WCG}
We refer the reader to \cite[Chapter~13]{fab-ultimo} and \cite[Sections~6.2 and~6.3]{fab-alt-JJ}
for complete information on these topics. A Banach space $X$ is said to be {\em weakly compactly generated (WCG)}
if there is a weakly compact set $G \sub X$ such that $X=\overline{{\rm span}}(G)$. 
Separable spaces and reflexive spaces are WCG. The class of WCG Banach spaces
is not closed under subspaces (this was first discovered by Rosenthal, see~\cite{ros-J-8}).
A Banach space $X$ is subspace of a WCG space if and only if $(B_{X^*},w^*)$ is Eberlein. 
Recall that a compact space~$K$ is said to be {\em Eberlein} if it is homeomorphic to a weakly compact subset of 
a Banach space or, equivalently, to a weakly compact subset of $c_0(\Gamma)$
for some non-empty set~$\Gamma$.

The Davis-Figiel-Johnson-Pe{\l}czy\'{n}ski factorization procedure
applies to deduce that a Banach space $X$ is WCG if and only if there exist a reflexive Banach space~$Y$
and an operator from~$Y$ to~$X$ with dense range. If~$Y$
can be chosen to be a Hilbert space, then $X$ is said to be {\em Hilbert generated}. The class of Hilbert generated
spaces includes all separable spaces and $L_1(\mu)$ for any finite measure~$\mu$. It is neither closed under subspaces,
as Rosenthal's aforementioned counterexample to the heredity problem for WCG spaces shows. 
A Banach space $X$ is subspace of a Hilbert generated space if and only if $(B_{X^*},w^*)$ is uniform Eberlein. 
Recall that a compact space~$K$
is said to be {\em uniform Eberlein} if it is homeomorphic to a weakly compact subset of a Hilbert space.
Every super-reflexive Banach space is subspace of a Hilbert generated space, but there are reflexive spaces
which are not.

\subsection{Weakly Lindelöf determined spaces}\label{subsection:WLD}

The reader is referred to \cite[Section~14.5]{fab-ultimo}, \cite[Sections~5.4 and~5.5]{fab-alt-JJ}
and \cite[Chapter~7]{fab-J} for complete information on this topic.
Given a non-empty set~$\Gamma$, the topology of pointwise convergence on $\mathbb{R}^\Gamma$
is denoted by $\tau_p(\Gamma)$. We denote by $\ell_\infty^c(\Gamma)$ the subspace of~$\ell_\infty(\Gamma)$ consisting
of all bounded functions $f:\Gamma \to \erre$ having countable support (i.e., the set $\{\gamma\in \Gamma:f(\gamma)\neq 0\}$ is countable)
and we write 
$$
	\Sigma([-1,1]^\Gamma):=[-1,1]^\Gamma \cap \ell_\infty^c(\Gamma)=B_{\ell_\infty^c(\Gamma)}.
$$
A compact space~$K$ is said to be {\em Corson} if it embeds homeomorphically into $(\Sigma([-1,1]^\Gamma),\tau_p(\Gamma))$ 
for some non-empty set~$\Gamma$. Every Eberlein compact space is Corson.
A Banach space~$X$ is said to be {\em weakly Lindelöf determined (WLD)} if there exist
a non-empty set~$\Gamma$ and an injective operator $\Phi: X^*\to \ell_\infty^c(\Gamma)$ which is
$w^*$-to-$\tau_p(\Gamma)$ continuous. This is equivalent to the fact that $(B_{X^*},w^*)$ is Corson.
The class of WLD spaces is closed under subspaces and is strictly larger than the class of subspaces 
of WCG spaces (see, e.g., \cite[Section~8.4]{fab-J}).
Every WLD Banach space~$X$ admits a Markushevich basis and, if $\{(x_i,x_i^*):i\in I\}\sub X\times X^*$
is any Markushevich basis in~$X$, then for each $x^*\in X^*$ the set $\{i\in I:x^*(x_i)\neq 0\}$ is countable.

\subsection{Corson's property~(C)}\label{subsection:C}

A Banach space~$X$ is said to have {\em Corson's property~(C)} if every family of convex closed subsets of~$X$ with empty intersection
has a countable subfamily with empty intersection. Pol~\cite{pol} showed that this is equivalent to the fact that $(B_{X^*},w^*)$ has convex countable tightness
(see, e.g., \cite[Theorem~14.37]{fab-ultimo}), in the following sense:

\begin{defi} 
A convex subset $C$ of a locally convex space $E$ is said to have {\em convex countable tightness} if for every convex set $D \sub C$ and for every 
$x\in \overline{D}$ there is a countable set $D_0\sub D$ such that $x\in \overline{D_0}$.
\end{defi}

Every WLD Banach space has property~(C) (see, e.g. \cite[Theorem~5.37]{fab-alt-JJ}), but the converse fails in general
(see, e.g., \cite[Theorem~14.39]{fab-ultimo}).

\subsection{Measures on compact spaces}\label{subsection:Measures}

Given a compact space~$K$, we denote by $P(K)$ the set of all regular Borel probability measures on~$K$.
Recall that a Corson compact space~$K$ is said to have {\em property~(M)} if every $\mu\in P(K)$ has separable support.
Since every separable subset of a Corson compact space is metrizable (see, e.g., \cite[Exercise~14.58]{fab-ultimo}), 
it follows that a Corson compact space~$K$
has property~(M) if and only if every $\mu\in P(K)$ has metrizable support, which in turn implies that
$\mu$ has {\em countable Maharam type} (i.e., the space $L_1(\mu)$ is separable). So, any Corson compact space having property~(M) belongs to
the following class:

\begin{defi}\label{defi:MS}
A compact space~$K$ is said to belong to the {\em class~MS} if every $\mu\in P(K)$ has countable Maharam type.
\end{defi}

Conversely, {\em for any WLD Banach space~$X$, the Corson compact space $(B_{X^*},w^*)$ 
has property~(M) if and only if it belongs to the class~MS.}
Indeed, for an arbitrary Banach space~$X$, any regular Borel probability measure on~$(B_{X^*},w^*)$ 
having countable Maharam type is concentrated on a $w^*$-separable set (see the remark after Theorem~B.2 in~\cite{avi-mar-ple})
and so it has $w^*$-metrizable support whenever $X$ is WLD (cf. \cite[Theorem~2.2]{ple2}).

A Corson compact space~$K$ has property~(M) if and only if the space $C(K)$ is WLD 
(see, e.g., \cite[Theorem~5.57]{fab-alt-JJ}). Every Eberlein compact space has property~(M). However,
the question of whether every Corson compact space has property~(M) is undecidable in ZFC. 
On the one hand, under MA+$\neg$CH every Corson compact space has property~(M) (see, e.g., \cite[Theorem~5.62]{fab-alt-JJ}). 
On the other hand, under CH there exist Corson compact spaces without property~(M); 
in fact, there exist WLD Banach spaces~$X$ for which $(B_{X^*},w^*)$ fails property~(M) (see \cite[Corollary 4.4]{ple2}).
We also stress that under CH there are Corson compact spaces belonging to the class MS which fail property~(M)
(see \cite{ple6} and the references therein).

\section{Property~(C) and projective tensor products}\label{section:C}

We begin this section by pointing out that the question of whether the projective tensor product preserves property~(C)
has a simple answer when one of the spaces is separable:

\begin{pro}\label{pro:separable-pten-C}
Let $X$ and $Y$ be Banach spaces. If $X$ is separable and $Y$ has property~(C), then $X\pten Y$ has property~(C).
\end{pro}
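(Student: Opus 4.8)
The plan is to use Pol's characterization recalled in Subsection~\ref{subsection:C}: it suffices to prove that $(B_{(X\pten Y)^*},w^*)$ has convex countable tightness. I would identify $(X\pten Y)^*$ with $\mathcal{L}(X,Y^*)$, so that on bounded sets the $w^*$-topology agrees with the W$^*$OT. Fixing a sequence $(x_n)_{n\in\N}$ dense in $B_X$ (this is where separability of $X$ enters), the map
$$\Phi\colon (B_{(X\pten Y)^*},w^*)\longrightarrow \prod_{n\in\N}(B_{Y^*},w^*),\qquad T\mapsto (T(x_n))_{n\in\N},$$
is well defined (as $\|T(x_n)\|\leq \|T\|\,\|x_n\|\leq 1$), affine and injective, and a routine argument using the uniform bound $\|T\|\leq 1$ shows that it is a homeomorphism onto its image. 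Thus $(B_{(X\pten Y)^*},w^*)$ is affinely homeomorphic to a convex subset of the \emph{countable} product $\prod_{n}(B_{Y^*},w^*)$. Since $Y$ has property~(C), each factor $(B_{Y^*},w^*)$ has convex countable tightness, and it is immediate from the definition that convex countable tightness passes to convex subsets. Hence the statement reduces to showing that a countable product of copies of $(B_{Y^*},w^*)$ has convex countable tightness.

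Next I would reduce the countable product to finite powers. Given a convex set $D$ in the product and a point $x\in\overline{D}$, for each finite $F\sub\N$ the coordinate projection $p_F(D)$ is convex in the finite power $\prod_{n\in F}(B_{Y^*},w^*)$ and $p_F(x)\in\overline{p_F(D)}$. If every finite power has convex countable tightness, then for each $F$ one obtains a countable $S_F\sub D$ with $p_F(x)\in\overline{p_F(S_F)}$; since basic neighbourhoods of $x$ depend on finitely many coordinates, the countable union $S:=\bigcup_{F}S_F$ satisfies $x\in\overline{S}$, which is exactly what convex countable tightness demands. By induction on the number of factors, everything rests on the two-factor case.

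The main obstacle is therefore the two-factor statement: if $K_1,K_2$ are compact convex sets with convex countable tightness, then so is $K_1\times K_2$ (equivalently, property~(C) is stable under finite direct sums). Here a naive coordinatewise argument fails, since applying convex countable tightness separately to each coordinate yields countable sets that cannot be coupled into a single countable set witnessing the point of $K_1\times K_2$. To organise the coupling I would first record the reformulation that, for a convex set $D$ in a locally convex space and $x\in\overline{D}$, one has $x\in\overline{D_0}$ for some countable $D_0\sub D$ \emph{if and only if} $x\in\overline{{\rm co}}(S)$ for some countable $S\sub D$ (the ``only if'' being trivial, and the converse following by replacing $S$ with the countable set of its rational convex combinations, which still lies in $D$). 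This recasts the problem as one of approximating $x$ by closed convex hulls of countable subsets, and the two-factor case can then be treated by a recursive construction (or, more cleanly, a countable elementary submodel argument) that alternates between the two coordinates, feeding the witnesses found in one coordinate back into the other, so that a single countable set ends up simultaneously $\sup$-norming $x$ in all directions. I expect this coupling to be the delicate heart of the argument; alternatively one may invoke the known stability of property~(C) under finite direct sums and then pass to the countable product as above.
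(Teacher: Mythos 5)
Your overall strategy coincides with the paper's: embed $(B_{(X\pten Y)^*},w^*)$ affinely and homeomorphically into the countable power $(B_{Y^*},w^*)^{\N}$ via a dense sequence of $B_X$, observe that convex countable tightness passes to (convex) subsets, and reduce the countable product to finite products and ultimately to the two-factor case. Your handling of the embedding is correct, and your reduction from the countable product to finite powers (using that basic neighbourhoods depend on finitely many coordinates, so the union $\bigcup_F S_F$ of the witnesses works) is sound and in fact a little more direct than the paper's Lemma~\ref{lem:CCTproduct-countable}, which routes the same reduction through an auxiliary reformulation.

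The genuine gap is the two-factor case, which you correctly identify as ``the delicate heart'' but do not prove. The ``recursive construction that alternates between the two coordinates, feeding the witnesses found in one coordinate back into the other'' is not an argument as stated: a countable alternation produces a countable set, but nothing in the sketch explains why the point $x$ ends up in its closure rather than merely in the closure of an object built from it, and it is exactly this coupling that defeats the naive coordinatewise attempt you rightly dismiss. The paper resolves it without any recursion: Lemma~\ref{lem:CCT} reformulates convex countable tightness of a closed convex set $C$ as ``every $\omega$-closed convex subset of $C$ is closed'', and Lemma~\ref{lem:CCTproduct} then exploits that the coordinate projections of a compact product are closed maps, so the projection of an $\omega$-closed convex set is again $\omega$-closed and convex, hence closed; a short slicing argument with the fibre $H\cap(\{x_1\}\times C_2)$ finishes the proof. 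Your fallback --- invoking the known stability of property~(C) under finite direct sums --- does legitimately close the gap \emph{for this application}, since $(B_{Y^*},w^*)^n$ is affinely $w^*$-homeomorphic to the dual ball of $Y\oplus_1\cdots\oplus_1 Y$ and property~(C) is a three-space property (Pol), so the finite powers are dual balls of spaces with property~(C). But note that this is weaker than the paper's Lemma~\ref{lem:CCTproduct}, which applies to arbitrary compact convex sets, not only dual balls; and your rational-convex-combinations reformulation, while correct, does not by itself advance the two-factor case. As written, the proposal is an outline with the key lemma either deferred to a vague construction or outsourced to a citation.
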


The proof relies on the fact that
convex countable tightness is preserved by countable products of compact convex sets, see Lemma~\ref{lem:CCTproduct-countable} below.
The later can be proved by essentially the same argument that the product of countably many compact spaces having countable tightness 
also has countable tightness (see, e.g., \cite[p.~112, 5.9]{juh}). We include a detailed proof for the convenience of the reader.

\begin{defi}
A subset $D$ of a topological space is said to be {\em $\omega$-closed} if $\overline{D_0}\sub D$ for every
countable set $D_0\sub D$.
\end{defi}

\begin{lem}\label{lem:CCT}
Let $E$ be a locally convex space and $C \sub E$ be a closed convex set. Then $C$ has convex countable tightness if and only if every
$\omega$-closed convex subset of~$C$ is closed.
\end{lem}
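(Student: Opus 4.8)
The plan is to prove the two implications separately; the forward direction is essentially immediate, while the reverse direction requires constructing an auxiliary set and verifying that it is convex and $\omega$-closed.

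For the forward implication, suppose $C$ has convex countable tightness and let $D\sub C$ be a convex $\omega$-closed set; the goal is to show that $D$ is closed. Given any $x\in\overline{D}$, convex countable tightness applied to the convex set $D$ produces a countable $D_0\sub D$ with $x\in\overline{D_0}$, and then $\omega$-closedness of $D$ forces $\overline{D_0}\sub D$, so $x\in D$. Hence $\overline{D}=D$.

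For the reverse implication, assume every convex $\omega$-closed subset of~$C$ is closed, and let $D\sub C$ be convex with $x\in\overline{D}$. First I would introduce the set
$$
	A:=\{y\in C:\, y\in\overline{D_0}\text{ for some countable }D_0\sub D\}.
$$
Note that $A\sub C$, since $C$ is closed and $\overline{D_0}\sub\overline{D}\sub C$, and that $D\sub A$ (take singletons). The strategy is to show that $A$ is convex and $\omega$-closed; granting this, the hypothesis yields that $A$ is closed, whence $\overline{D}\sub A$, and in particular $x\in A$, which is exactly the assertion that there is a countable $D_0\sub D$ with $x\in\overline{D_0}$, i.e. convex countable tightness of $C$.

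Checking $\omega$-closedness is routine: given countably many $a_n\in A$, choose countable $D_n\sub D$ with $a_n\in\overline{D_n}$ and set $D_\infty:=\bigcup_n D_n$, which is countable; then $\{a_n\}\sub\overline{D_\infty}$, so $\overline{\{a_n\}}\sub\overline{D_\infty}\sub A$. The main obstacle is the convexity of $A$, and here I would exploit the continuity of the affine map $f_\lambda(u,v)=\lambda u+(1-\lambda)v$ on $E\times E$. Given $y_1,y_2\in A$ with $y_i\in\overline{D_i}$ for countable $D_i\sub D$, and $\lambda\in[0,1]$, set $D':=D_1\cup D_2$ (countable) and $D'':=f_\lambda(D'\times D')$. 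Convexity of $D$ guarantees $D''\sub D$, and $D''$ is countable; since $(y_1,y_2)\in\overline{D'}\times\overline{D'}=\overline{D'\times D'}$ and $f_\lambda$ is continuous, transporting closures gives $\lambda y_1+(1-\lambda)y_2=f_\lambda(y_1,y_2)\in\overline{f_\lambda(D'\times D')}=\overline{D''}\sub A$. This establishes convexity and finishes the argument. The only points requiring care are that closures distribute over finite products and that continuity yields $f_\lambda(\overline{S})\sub\overline{f_\lambda(S)}$, both of which hold in any locally convex space.
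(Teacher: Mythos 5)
Your proof is correct and follows essentially the same route as the paper: your set $A$ coincides with the paper's $D_1:=\bigcup\{\overline{U}:U\sub D\text{ countable}\}$, and you simply spell out the convexity and $\omega$-closedness that the paper declares clear (the paper's shortcut for convexity is to rewrite $D_1$ as the directed union $\bigcup\{\overline{{\rm co}}(U):U\sub D\text{ countable}\}$). No gaps.
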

\begin{proof} The `only if' part is immediate. Suppose now that every $\omega$-closed convex
subset of~$C$ is closed and take any convex set $D \sub C$. Define
$$
	D_1:=\bigcup \big\{\overline{U}: \, \text{$U \sub D$ is countable}\big\}=\bigcup \big\{\overline{{\rm co}}(U): \, \text{$U \sub D$ is countable}\big\},
$$ 
so that $D \sub D_1 \sub \overline{D} \sub C$. Clearly, $D_1$ is $\omega$-closed and convex, hence $D_1$ is closed
and therefore $\overline{D}=D_1$, as required.
\end{proof}

\begin{lem}\label{lem:CCTproduct}
Let $E_1$ and $E_2$ be locally convex spaces and let $C_1 \sub E_1$ and $C_2 \sub E_2$ be compact convex sets 
having convex countable tightness. Then $C_1 \times C_2$ has convex countable tightness in~$E_1\times E_2$.
\end{lem}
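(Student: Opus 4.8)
The plan is to reduce, via Lemma~\ref{lem:CCT}, to the following statement: for every convex set $D \sub C_1\times C_2$ and every $(x_0,y_0)\in\overline{D}$ there is a countable set $D_0\sub D$ with $(x_0,y_0)\in\overline{D_0}$. The first simplification is that it suffices to produce a countable $D_0\sub D$ with $(x_0,y_0)\in\overline{{\rm co}}(D_0)$: once this is achieved, the set of all rational convex combinations of elements of $D_0$ is again countable, is contained in $D$ (as $D$ is convex), and has $(x_0,y_0)$ in its closure. By the Hahn--Banach theorem in $E_1\times E_2$ (whose dual is $E_1^*\times E_2^*$), the condition $(x_0,y_0)\in\overline{{\rm co}}(D_0)$ is equivalent to
\[
	f(x_0)+g(y_0)\le\sup_{(x,y)\in D_0}\big(f(x)+g(y)\big)\qquad\text{for all }f\in E_1^*,\ g\in E_2^*,
\]
and we know this inequality holds with $D$ in place of $D_0$, since $D$ is convex and $(x_0,y_0)\in\overline{D}$. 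Thus the task becomes: find a single countable $D_0\sub D$ that is extremal towards $(x_0,y_0)$ in every direction $(f,g)$ simultaneously.

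Two building blocks are available. First, applying the continuous linear projections shows $x_0\in\overline{{\rm co}}(\pi_1(D))$ and $y_0\in\overline{{\rm co}}(\pi_2(D))$; since $\pi_1(D)$ and $\pi_2(D)$ are convex, the convex countable tightness of $C_1$ and of $C_2$ furnishes countable sets $A\sub\pi_1(D)$ and $B\sub\pi_2(D)$ with $x_0\in\overline{A}$ and $y_0\in\overline{B}$, i.e. countable approximations in each coordinate separately. Second, it is worth recording the clean special case in which one factor, say $C_2$, is first countable (for instance metrizable): choosing a decreasing neighbourhood base $(V_k)$ at $y_0$, the convex countable tightness of $C_1$ applied to each convex set $\pi_1(D\cap(C_1\times V_k))$ yields countable $D_{0,k}\sub D\cap(C_1\times V_k)$ with $x_0\in\overline{\pi_1(D_{0,k})}$, and $D_0:=\bigcup_k D_{0,k}$ then satisfies $(x_0,y_0)\in\overline{D_0}$. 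This already indicates the mechanism and shows where non-first-countability will hurt. Working instead with the $\omega$-closed reformulation of Lemma~\ref{lem:CCT}, one may also note that every section $D_x$ and $D^y$ of an $\omega$-closed convex $D$ is itself $\omega$-closed and convex, hence closed; this gives per-coordinate control but, as explained below, is not by itself enough.

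The main construction is a genuinely two-dimensional closing-off: build increasing countable sets $D_0\sub D_1\sub\cdots\sub D$ in which odd stages enrich first coordinates, by applying the convex countable tightness of $C_1$ to suitable convex projections of the part of $D$ lying over the second-coordinate data already accumulated, and even stages enrich second coordinates symmetrically via $C_2$; one then sets $D_0=\bigcup_n D_n$ and checks the displayed inequality for all $(f,g)$, using the compactness of $C_1\times C_2$ to pass to the limit. Packaged contrapositively, this is the Juh\'asz-style free-sequence argument: if no countable subset worked, a transfinite recursion of length $\omega_1$ would produce points $d_\xi=(x_\xi,y_\xi)\in D$ together with functionals $(f_\xi,g_\xi)$ such that $f_\xi(x_\xi)+g_\xi(y_\xi)>f_\xi(x_\eta)+g_\xi(y_\eta)$ for all $\eta<\xi$, and a complete accumulation point of $(d_\xi)$, reflected through the convex countable tightness of the two factors, must contradict this separation.

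I expect the coordination between the two factors to be the main obstacle. Marginal approximation does not give joint approximation: the points $(0,1)$ and $(1,0)$ satisfy $0\in{\rm co}\{0,1\}$ in each coordinate, yet $(0,0)\notin{\rm co}\{(0,1),(1,0)\}$; equivalently, the supremum of a sum $f(x)+g(y)$ over $D$ cannot be split into a sum of one-variable suprema because $D$ is not a product. Combined with the fact that $C_1$ and $C_2$ need not be first countable, this is precisely the phenomenon behind the (consistent) failure of productivity of countable tightness for general compact spaces; what rescues the present statement is convexity, which makes the convex countable tightness of each factor applicable to the convex projections and sections arising at every stage. Threading this convexity correctly through the interleaved recursion, so that a single countable set becomes extremal in all directions $(f,g)$ at once, is the delicate heart of the proof.
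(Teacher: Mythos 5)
Your proposal identifies the right difficulty but does not overcome it: the ``delicate heart of the proof'' --- making a single countable set extremal in all directions $(f,g)$ simultaneously --- is exactly the part you leave as a plan. Neither of your two suggested mechanisms is actually executed. In the interleaved closing-off, the ``part of $D$ lying over the second-coordinate data already accumulated'' is a countable union of sections, hence not convex, so it is unclear to which convex sets the convex countable tightness of $C_1$ is to be applied at each stage, and no argument is given for how compactness lets you ``pass to the limit'' the displayed inequality for uncountably many pairs $(f,g)$ at once. The contrapositive free-sequence version is likewise only named, not carried out. A side remark: countable tightness is in fact (countably) productive for \emph{compact} spaces in ZFC --- the paper cites Juh\'asz for precisely this --- so the ``consistent failure of productivity'' you invoke as motivation concerns non-compact spaces and does not apply here.

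The missing idea is close to something you wrote down and then discarded. The paper's proof works entirely with the $\omega$-closed reformulation of Lemma~\ref{lem:CCT}: to show that an $\omega$-closed convex set $H\sub C_1\times C_2$ contains a given point $(x_1,x_2)\in\overline{H}$, one first notes (as you do) that the section $H_0=H\cap(\{x_1\}\times C_2)$ is $\omega$-closed and convex in a copy of $C_2$, hence closed and compact; if $x_2\notin\pi_2(H_0)$, one separates by a closed convex neighborhood $V$ of $x_2$ and considers $H\cap(C_1\times V)$, which is still $\omega$-closed and convex and still has $(x_1,x_2)$ in its closure. The decisive observation, absent from your proposal, is that $\pi_1$ is a \emph{closed} map (by compactness) and that closed maps carry $\omega$-closed sets to $\omega$-closed sets; therefore $\pi_1(H\cap(C_1\times V))$ is an $\omega$-closed convex subset of $C_1$, hence closed by convex countable tightness of $C_1$, hence contains $x_1$ --- producing a point of $H_0$ whose second coordinate lies in $V$, a contradiction. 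This is the convex analogue of the classical ``closed maps with countably tight fibers'' argument and requires no transfinite or iterated construction; without this (or a genuinely completed version of your recursion) the proof is not done.
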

\begin{proof}
By Lemma~\ref{lem:CCT}, it suffices to show that every $\omega$-closed convex set $H \sub C_1\times C_2$ is closed.
Take $(x_1,x_2)\in \overline{H}$ and let us prove that $(x_1,x_2)\in H$. Since the map
$$
	\phi: E_2 \to \{x_1\}\times E_2,
	\quad
	\phi(y):=(x_1,y),
$$
is an affine homeomorphism, $\phi(C_2)=\{x_1\}\times C_2$ is a convex compact set having convex countable tightness. Since its convex subset
$$
	H_0:=H \cap (\{x_1\}\times C_2)
$$
is $\omega$-closed, it follows that $H_0$ is closed, hence compact. 

Let $\pi_2: C_1\times C_2 \to C_2$ be the canonical projection. To finish the proof we will check that $x_2\in \pi_2(H_0)$.
By contradiction, suppose that $x_2\not\in \pi_2(H_0)$. Since $\pi_2(H_0)$ is compact, 
there is a closed convex neighborhood $V$ of~$x_2$ in~$C_2$ such that $V \cap \pi_2(H_0)=\emptyset$. Since 
$W:=C_1\times V$ is a neighborhood of $(x_1,x_2)$ in~$C_1\times C_2$ 
and $(x_1,x_2)\in \overline{H}$, we have
$(x_1,x_2)\in \overline{H \cap W}$. 

Let $\pi_1: C_1\times C_2 \to C_1$ be the canonical projection. 
Since $\pi_1$ is continuous and $C_1\times C_2$ is compact, $\pi_1$ is a closed map (i.e., $\pi_1(F)$ is closed whenever $F \sub C_1 \times C_2$ is closed). 
Since $H \cap W \sub C_1\times C_2$ is $\omega$-closed, it follows that $\pi_1(H\cap W)$ is $\omega$-closed as well.  
Bearing in mind that $\pi_1(H\cap W)$ is convex and that $C_1$ has convex countable tightness, we conclude that $\pi_1(H\cap W)$ is closed.
Now, the continuity of $\pi_1$ implies that $x_1\in \pi_1(H\cap W)$, so there is $y\in C_2$ such that $(x_1,y)\in H \cap W$. In particular, $(x_1,y)\in H_0$ and $y\in V$,
which contradicts the fact that $V\cap \pi_2(H_0)=\emptyset$.  
\end{proof}

\begin{lem}\label{lem:CCTproduct-countable}
Let $(E_n)_{n\in \N}$ be a sequence of locally convex spaces and, for each $n\in \N$, let $C_n \sub E_n$ be a compact convex set 
having convex countable tightness. Then $\prod_{n\in \N}C_n$ has convex countable tightness in~$\prod_{n\in \N}E_n$.
\end{lem}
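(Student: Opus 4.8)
The plan is to deduce the countable case from the two-factor case (Lemma~\ref{lem:CCTproduct}) by an inductive argument combined with a limiting step. The main structural idea is that $\prod_{n\in\N}C_n$ can be viewed as a projective limit of the finite products $\prod_{n\le N}C_n$, each of which has convex countable tightness by finite induction on Lemma~\ref{lem:CCTproduct}. By Lemma~\ref{lem:CCT}, it suffices to show that every $\omega$-closed convex set $H\subseteq C:=\prod_{n\in\N}C_n$ is closed. So I would fix such an $H$, take a point $x=(x_n)_{n\in\N}\in\overline{H}$, and aim to produce a point of $H$ equal to~$x$; equivalently, I must show $x\in H$.

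First I would set up the finite projections. For each $N\in\N$ write $p_N:C\to\prod_{n\le N}C_n$ for the canonical projection onto the first $N$ coordinates, and let $q_N:\prod_{n\le N}C_n\to \prod_{n\le N-1}C_n$ be the further projection dropping the last coordinate. The key observation is that $p_N(H)$ is a convex subset of $\prod_{n\le N}C_n$, and because continuous images of $\omega$-closed sets under closed maps between compacta are again $\omega$-closed (exactly the mechanism used in the proof of Lemma~\ref{lem:CCTproduct} via the closed projection~$\pi_1$), the sets $p_N(H)$ are $\omega$-closed. Since $\prod_{n\le N}C_n$ has convex countable tightness, Lemma~\ref{lem:CCT} forces each $p_N(H)$ to be \emph{closed}, hence compact. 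Consequently $p_N(x)\in \overline{p_N(H)}=p_N(H)$ for every~$N$, so each finite truncation of~$x$ genuinely lies in $p_N(H)$.

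The decisive step is then to reassemble these finite-dimensional witnesses into a single point of~$H$ projecting to~$x$. For each $N$ consider the fibre $F_N:=H\cap p_N^{-1}(p_N(x))$, i.e.\ the set of elements of $H$ whose first $N$ coordinates agree with~$x$. Because $p_N(x)\in p_N(H)$, each $F_N$ is nonempty; each $F_N$ is closed in the compact space $\overline{H}$ (note $H$ itself need not be closed, but $\overline{H}$ is compact and I can work with $\overline{H}\cap p_N^{-1}(p_N(x))$, checking afterward that a limit point actually lands in~$H$ using $\omega$-closedness); and the $F_N$ are nested decreasing. By compactness the intersection $\bigcap_{N\in\N}F_N$ is nonempty, and any $z$ in it satisfies $p_N(z)=p_N(x)$ for all~$N$, hence $z=x$. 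The remaining point is to verify that such a limit point really belongs to~$H$ and not merely to~$\overline{H}$: here I would exploit that $z$ is the limit of a sequence drawn from the $F_N\subseteq H$, so a countable subset $D_0\subseteq H$ has $z\in\overline{D_0}$, and $\omega$-closedness of~$H$ gives $z\in H$. This shows $x=z\in H$, proving $H$ is closed and completing the argument.

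The main obstacle I anticipate is precisely this last gluing step: one must be careful that the finite-dimensional membership facts $p_N(x)\in p_N(H)$ combine into an actual element of~$H$ rather than of the possibly larger set $\overline{H}$. The clean way around this is to observe that the nested fibres allow one to select, for each~$N$, a point $h_N\in H$ with $p_N(h_N)=p_N(x)$; then $(h_N)_{N\in\N}$ is a \emph{sequence} in~$H$ that converges coordinatewise to~$x$ in the product topology, so $\{h_N:N\in\N\}$ is a countable subset of~$H$ whose closure contains~$x$, and $\omega$-closedness delivers $x\in H$ directly without invoking compactness of $\overline H$ at all. This streamlined version sidesteps the subtlety and makes the role of the hypothesis that $H$ is $\omega$-closed (rather than merely closed) completely transparent.
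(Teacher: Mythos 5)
Your proposal is correct and, in its final streamlined form, is essentially the paper's own argument: reduce to showing $\omega$-closed convex sets are closed via Lemma~\ref{lem:CCT}, use closedness of the finite projections to get $\pi_F(H)$ closed (via finite induction on Lemma~\ref{lem:CCTproduct}), and then select for each finite stage a point of $H$ agreeing with $x$ on those coordinates, so that a countable subset of $H$ has $x$ in its closure and $\omega$-closedness finishes. The intermediate detour through nested fibres and compactness of $\overline{H}$ is unnecessary, as you yourself observe, but it does not affect correctness.
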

\begin{proof}
We will check that every $\omega$-closed convex set $H \sub \prod_{n\in \N}C_n$ is closed (and then Lemma~\ref{lem:CCT} applies).
For each finite set $F \sub \N$, let 
$$
	\pi_F: \prod_{n\in \N}C_n \to \prod_{n\in F} C_n
$$ 
be the canonical projection. Since $\pi_F$ is continuous and $\prod_{n\in \N}C_n$ is compact, $\pi_F$~is a closed map. This fact and the $\omega$-closedness of~$H$
imply that the set $\pi_F(H)$ is $\omega$-closed as well. Since $H$ is convex, so is $\pi_F(H)$. From the fact that $\prod_{n\in F}C_n$ has convex countable tightness
(which follows by induction from Lemma~\ref{lem:CCTproduct}), we conclude that $\pi_F(H)$ is closed.

Fix $x\in \overline{H}$. For each finite set $F \sub \N$ the map~$\pi_F$ is continuous, therefore $\pi_F(x)\in \pi_F(H)$
and we choose $x_F\in H$ such that $\pi_F(x)=\pi_F(x_F)$. Clearly, $D:=\{x_F: \, \text{$F \sub \N$ is finite}\}$ is 
a countable subset of~$H$ with $x\in \overline{D}$. 
Since $H$ is $\omega$-closed, we have $\overline{D}\sub H$ and so $x\in H$. This shows that $H$ is closed.
\end{proof}

\begin{proof}[Proof of Proposition~\ref{pro:separable-pten-C}] 
Let $(x_n)_{n\in\N}$ be a dense sequence in~$B_X$. Then
the map
$$
	\xi: (B_{(X\pten Y)^*},w^*) \to (B_{Y^*},w^*)^{\N},
	\quad
	\xi(S):=(S_X(x_n))_{n\in \N},
$$
is an affine homeomorphic embedding. Since $(B_{Y^*},w^*)^{\N}$ has convex countable tightness
(by Lemma~\ref{lem:CCTproduct-countable}), the same holds
for $(B_{(X\pten Y)^*},w^*)$. 
\end{proof}

As we already mentioned, the projective tensor product of two Banach spaces having property~(C) can fail property~(C).
An example is $\ell_p(\Gamma) \pten \ell_q(\Delta)$ for uncountable sets $\Gamma$ and $\Delta$
and $1<p,q<\infty$ satisfying $1/p+1/q\geq 1$. The point is that, under such assumptions, $\ell_p(\Gamma) \pten \ell_q(\Delta)$
contains a subspace isometric to~$\ell_1(\omega_1)$, which fails property~(C) (and this property is inherited by subspaces). 
While that embedding
might be known for specialists, we include a proof for the sake of completeness.

\begin{pro}\label{pro:embedding-lp-lq}
Let $\Gamma$ and $\Delta$ be uncountable sets and let $1<p,q<\infty$ be such that $1/p+1/q \geq 1$.
Then $\ell_p(\Gamma)\pten \ell_q(\Delta)$ contains a subspace isometric to~$\ell_1(\kappa)$, where
$\kappa=\min\{|\Gamma|,|\Delta|\}$.
\end{pro}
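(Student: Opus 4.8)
The plan is to produce the copy of $\ell_1(\kappa)$ concretely, spanned by ``diagonal'' elementary tensors, mimicking the classical case $\ell_2\pten \ell_2 \supseteq \ell_1$ recalled in the introduction. Since the projective tensor product is commutative (i.e. $\ell_p(\Gamma)\pten \ell_q(\Delta)$ and $\ell_q(\Delta)\pten \ell_p(\Gamma)$ are isometrically isomorphic) and the hypothesis $1/p+1/q\geq 1$ is symmetric in the two factors, I may assume without loss of generality that $\kappa=|\Gamma|\leq |\Delta|$ and fix an injection $\sigma\colon \Gamma \to \Delta$. Writing $(e_\gamma)_{\gamma\in\Gamma}$ and $(f_\delta)_{\delta\in\Delta}$ for the canonical unit vector bases of $\ell_p(\Gamma)$ and $\ell_q(\Delta)$, I would consider the vectors $u_\gamma:=e_\gamma\otimes f_{\sigma(\gamma)}\in \ell_p(\Gamma)\pten \ell_q(\Delta)$, each of projective norm $\|u_\gamma\|=\|e_\gamma\|_p\|f_{\sigma(\gamma)}\|_q=1$. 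The claim is that the assignment $\delta_\gamma \mapsto u_\gamma$ (where $(\delta_\gamma)$ denotes the unit vector basis of $\ell_1(\kappa)$) extends to an isometric embedding of $\ell_1(\kappa)$ onto $\overline{{\rm span}}\{u_\gamma:\gamma\in\Gamma\}$. By density of finitely supported vectors, it suffices to verify that $\big\|\sum_{\gamma\in F} a_\gamma u_\gamma\big\|=\sum_{\gamma\in F}|a_\gamma|$ for every finite $F\sub \Gamma$ and all scalars $(a_\gamma)_{\gamma\in F}$.

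The upper estimate $\big\|\sum_{\gamma\in F} a_\gamma u_\gamma\big\|\leq \sum_{\gamma\in F}|a_\gamma|\,\|u_\gamma\|=\sum_{\gamma\in F}|a_\gamma|$ is immediate from the triangle inequality for the projective norm. For the reverse inequality I would exhibit a norming bilinear form. Given $F$ and $(a_\gamma)_{\gamma\in F}$, set $\epsilon_\gamma:=\operatorname{sign}(a_\gamma)$ and define $S\colon \ell_p(\Gamma)\times \ell_q(\Delta)\to \erre$ by
$$
S(x,y):=\sum_{\gamma\in F}\epsilon_\gamma\, x_\gamma\, y_{\sigma(\gamma)} .
$$
Then $S(e_\gamma,f_{\sigma(\gamma)})=\epsilon_\gamma$ for $\gamma\in F$, so, recalling that $S\in\mathcal{B}(\ell_p(\Gamma),\ell_q(\Delta))$ defines an element of $(\ell_p(\Gamma)\pten \ell_q(\Delta))^*$, one gets $S\big(\sum_{\gamma\in F} a_\gamma u_\gamma\big)=\sum_{\gamma\in F} a_\gamma\epsilon_\gamma=\sum_{\gamma\in F}|a_\gamma|$. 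Provided $\|S\|\leq 1$, this yields $\big\|\sum_{\gamma\in F} a_\gamma u_\gamma\big\|\geq \sum_{\gamma\in F}|a_\gamma|$, completing the isometry.

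The crux, and the only place the hypothesis enters, is the estimate $\|S\|\leq 1$. I would bound, for arbitrary $x\in \ell_p(\Gamma)$ and $y\in \ell_q(\Delta)$,
$$
|S(x,y)|\leq \sum_{\gamma\in F}|x_\gamma|\,|y_{\sigma(\gamma)}|
\leq \Big(\sum_{\gamma}|x_\gamma|^{p}\Big)^{1/p}\Big(\sum_{\gamma}|y_{\sigma(\gamma)}|^{p^*}\Big)^{1/p^*}
\leq \|x\|_p\,\|y\|_{p^*},
$$
using H\"older's inequality with the conjugate exponents $p,p^*$ and then enlarging the sum over $\sigma(F)\sub\Delta$ to all of $\Delta$. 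The hypothesis $1/p+1/q\geq 1$ is exactly the condition $q\leq p^*$, which gives the inclusion $\ell_q(\Delta)\hookrightarrow \ell_{p^*}(\Delta)$ with $\|y\|_{p^*}\leq \|y\|_q$; hence $|S(x,y)|\leq \|x\|_p\|y\|_q$ and $\|S\|\leq 1$, as needed. The main (minor) obstacle is thus purely this norm computation, together with keeping the bookkeeping of the reindexing $\sigma$ straight; once $\|S\|\le 1$ is secured, the finitely supported identity and a density argument deliver the desired isometric copy of $\ell_1(\kappa)$.
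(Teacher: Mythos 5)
Your proposal is correct and follows essentially the same route as the paper: diagonal elementary tensors, the triangle inequality for the upper bound, and a norming bilinear form whose boundedness $\|S\|\leq 1$ is obtained from H\"older's inequality together with the inclusion $\ell_q\hookrightarrow\ell_{p^*}$ coming from $q\leq p^*$. The only cosmetic difference is how the two index sets are reconciled (you use an injection $\sigma\colon\Gamma\to\Delta$ directly, while the paper first reduces to $\Gamma=\Delta$ via the $1$-complemented copy of $\ell_q(\Gamma)$ inside $\ell_q(\Delta)$); both are fine.
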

\begin{proof}
We can assume with no loss of generality that $\Gamma=\Delta$, since $\ell_q(\Gamma)$ is a $1$-complemented subspace of $\ell_q(\Delta)$ 
if $|\Gamma|\leq |\Delta|$ (and then \cite[Proposition~2.4]{rya} applies). Let $\{e_i:i\in \Gamma\}$ and $\{\tilde{e_i}:i\in \Gamma\}$
be the canonical bases of $\ell_p(\Gamma)$ and $\ell_q(\Gamma)$, respectively.
Let us prove that the set $\{e_i\otimes \tilde{e_i}:i\in \Gamma\} \sub \ell_p(\Gamma)\pten \ell_q(\Gamma)$ 
is isometrically equivalent to the canonical basis of~$\ell_1(\Gamma)$. To this end, pick a finite set $F\subseteq \Gamma$, take 
$\lambda_i\in \mathbb R$ for every $i\in F$, and let us prove that
\begin{equation}\label{eqn:lplq}
	\left\Vert \sum_{i\in F}\lambda_i e_i\otimes \tilde{e_i}\right\Vert=\sum_{i\in F}\vert \lambda_i\vert.
\end{equation}
The inequality ``$\leq$'' is obvious.
Define a bilinear functional $S:\ell_p(\Gamma)\times \ell_q(\Gamma)\to \mathbb R$ by
$$
	S(x,y):=\sum_{i\in F}{\rm sign}(\lambda_i)x_iy_i
	\quad
	\text{for all $x=(x_i)_{i\in \Gamma}\in \ell_p(\Gamma)$ and $y=(y_i)_{i\in \Gamma}\in \ell_q(\Gamma)$.}
$$
Let us prove that $S$ is continuous and $\Vert S\Vert\leq 1$. Observe that $1/p+1/q\geq 1$ is equivalent to $q\leq p^*$, 
where $p^*$ is the H\"older conjugate of~$p$ (i.e., $1/p+1/p^*=1$). Now, given $x=(x_i)_{i\in \Gamma}\in \ell_p(\Gamma)$ and 
$y=(y_i)_{i\in \Gamma}\in \ell_q(\Gamma)$, we have
$$
	\vert S(x,y)\vert\leq 
	\sum_{i\in F} \vert x_i\vert \vert y_i\vert\leq \left(\sum_{i\in F} \vert x_i\vert^p \right)^\frac{1}{p}\left(\sum_{i\in F}\vert y_i\vert^{p^*} 	\right)^\frac{1}{p^*}
$$
by H\"older's inequality. Since $q\leq p^*$ we get 
\[
	\left(\sum_{i\in F} \vert x_i\vert^p \right)^\frac{1}{p}\left(\sum_{i\in F}\vert y_i\vert^{p^*} \right)^\frac{1}{p^*} 
	\leq \left(\sum_{i\in F} \vert x_i\vert^p \right)^\frac{1}{p}\left(\sum_{i\in F}\vert y_i\vert^{q} \right)^\frac{1}{q}\\
	\leq \Vert x\Vert \Vert y\Vert.
\]
Consequently, $S$ is continuous and $\Vert S\Vert\leq 1$. Hence
\[
	\left\Vert \sum_{i\in F}\lambda_i e_i\otimes \tilde{e_i}\right\Vert\geq S\left(\sum_{i\in F}\lambda_i e_i\otimes \tilde{e_i}\right)=
	\sum_{i\in F} \lambda_i S(e_i,\tilde{e_i})=\sum_{i\in F} |\lambda_i|,
\]
which proves inequality ``$\geq$'' in~\eqref{eqn:lplq} and finishes the proof.
\end{proof}

The following definition fits in the general scheme of approximation properties of Banach spaces.

\begin{defi}\label{defi:BSAP}
We say that a Banach space~$Z$ has the {\em $\lambda$-bounded separable approximation property ($\lambda$-BSAP)}
for some $\lambda \geq 1$ if the identity operator on~$Z$ belongs to the SOT-closure of
$\lambda B_{\mathcal{L}(Z)} \cap \mathcal{S}(Z)$.
\end{defi}

The previous concept is a common extension of two properties which have been
thoroughly studied in the literature, namely, the {\em $\lambda$-bounded approximation property ($\lambda$-BAP)} and
the {\em $\lambda$-separable complementation property ($\lambda$-SCP)}. On the one hand, the $\lambda$-BAP is defined as
in Definition~\ref{defi:BSAP} by replacing $\mathcal{S}(Z)$ with the set of all finite rank operators on~$Z$. On the other hand,
a Banach space $Z$ is said to have the $\lambda$-SCP for some $\lambda \geq 1$ if for every
separable subspace $X \sub Z$ there is a projection $P \in \mathcal{S}(Z)$ with $\|P\|\leq \lambda$
such that $X \sub P(Z)$. Examples of Banach spaces with the $1$-SCP are WLD spaces and duals of Asplund spaces 
(see, e.g., \cite[Theorem~3.42]{fab-alt-JJ}).

\begin{lem}\label{lem:SCPidentity}
Let $Z$ be a Banach space. If $Z$ has the $\lambda$-SCP for some $\lambda\geq 1$,
then $Z$ has the $\lambda$-BSAP.
\end{lem}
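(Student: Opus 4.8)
The plan is to unwind the definition of the $\lambda$-BSAP and reduce it to a direct application of the $\lambda$-SCP. Recall that the identity operator $I_Z$ lies in the SOT-closure of $A:=\lambda B_{\mathcal{L}(Z)}\cap\mathcal{S}(Z)$ precisely when, for every finite set $\{z_1,\dots,z_n\}\sub Z$ and every $\epsilon>0$, there exists $T\in A$ with $\|T(z_i)-z_i\|<\epsilon$ for all $i=1,\dots,n$. So the whole task is to produce, for each such finite configuration, a single operator in $A$ that approximates the identity at the prescribed points.

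First I would fix an arbitrary finite set $\{z_1,\dots,z_n\}\sub Z$ and consider the separable subspace $X:={\rm span}(\{z_1,\dots,z_n\})$ (a finite-dimensional, hence separable, subspace). The $\lambda$-SCP then furnishes a projection $P\in\mathcal{S}(Z)$ with $\|P\|\leq\lambda$ and $X\sub P(Z)$. This $P$ automatically belongs to $A$, since it has separable range and norm at most~$\lambda$.

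The key observation, which makes the estimate trivial, is that a projection restricts to the identity on its own range: because each $z_i\in X\sub P(Z)$ and $P$ is idempotent with $P(Z)$ as its range, we get $P(z_i)=z_i$ for every~$i$. Hence $\|P(z_i)-z_i\|=0<\epsilon$, so $P$ witnesses membership in the relevant basic SOT-neighborhood of~$I_Z$. Since the finite set and $\epsilon$ were arbitrary, $I_Z$ lies in the SOT-closure of~$A$, which is exactly the assertion that $Z$ has the $\lambda$-BSAP.

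There is no substantial obstacle here: the argument hinges entirely on recognizing that the defining property of a projection (fixing its range pointwise) yields exact agreement with the identity on the chosen points, so that the SOT-approximation is achieved with zero error rather than merely within~$\epsilon$. The only point deserving a word of care is the routine verification that agreement at finitely many points suffices to place $I_Z$ in the SOT-closure, which is immediate from the description of SOT-basic neighborhoods recorded in Subsection~\ref{subsection:Operators}.
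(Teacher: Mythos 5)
Your proof is correct and follows essentially the same route as the paper: both arguments use the $\lambda$-SCP to produce a projection of norm at most $\lambda$ with separable range containing the prescribed points, which is then fixed pointwise on its range and hence agrees exactly with the identity there. The only difference is cosmetic --- the paper organizes the projections into a directed net converging SOT to $I_Z$, while you verify membership in each basic SOT-neighborhood directly.
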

\begin{proof} Let $I_Z:Z \to Z$ be the identity operator. 
Let $\{P_d: d\in D\}$ be the family of all projections belonging to $\lambda B_{\mathcal{L}(Z)} \cap \mathcal{S}(Z)$ and set $Z_d:=P_d(Z)$ for every $d\in D$.
Consider the preorder $\preceq$ in $D$ defined by 
$$
	d \preceq d' \ \Longleftrightarrow \ Z_d \sub Z_{d'}.
$$
Since $Z$ has the $\lambda$-SCP, $(D,\preceq)$ is a directed set. Clearly, the net $(P_d)_{d \in D}$ is 
SOT-convergent to~$I_Z$, as desired.
\end{proof}

\begin{defi}\label{defi:WS}
Let $X$ and $Y$ be Banach spaces. We denote by $\mathcal{W^*S}(X,Y^*)$ the set of 
all $T\in \mathcal{L}(X,Y^*)$ such that $T(B_{X})$ is $w^*$-separable.
\end{defi}

\begin{theo}\label{theo:LEMMA}
Let $X$ and $Y$ be Banach spaces and $V \sub \mathcal{L}(X,Y^*)$ be a convex set such that:
\begin{enumerate}
\item[(i)] $T\circ S \in V$ for every $S\in B_{\mathcal{L}(X)}$ and for every $T\in V$.
\item[(ii)] $V$ has convex countable tightness with respect to $W^*OT$. 
\end{enumerate}
Suppose that $X$ has the $\lambda$-BSAP for some $\lambda\geq 1$. The following statements hold: 
\begin{enumerate}
\item[(a)] Every element of~$V$ has $w^*$-separable range. 
\item[(b)] If $V \sub \mathcal{W}(X,Y^*)$, then $V \sub \mathcal{S}(X,Y^*)$.
\item[(c)] If $\lambda=1$, then $V \sub \mathcal{W^*S}(X,Y^*)$.
\end{enumerate}
\end{theo}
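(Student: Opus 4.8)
The plan is to handle all three parts with one construction and then add the extra ingredient each requires. Fix $T\in V$. Since $X$ has the $\lambda$-BSAP, I would pick a net $(S_\alpha)$ in $\lambda B_{\mathcal{L}(X)}\cap \mathcal{S}(X)$ that is SOT-convergent to $I_X$ and set $P_\alpha:=\lambda^{-1}S_\alpha\in B_{\mathcal{L}(X)}\cap \mathcal{S}(X)$, so $P_\alpha\to \lambda^{-1}I_X$ in SOT. By hypothesis~(i) each $T\circ P_\alpha$ lies in~$V$, and since $P_\alpha(x)\to \lambda^{-1}x$ in norm for every $x$, we get $T\circ P_\alpha\to \lambda^{-1}T$ in W$^*$OT. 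Put $D:={\rm co}\{T\circ P_\alpha\}\subseteq V$: it is convex, each of its elements has norm-separable range, and $\lambda^{-1}T\in\overline{D}$ (W$^*$OT-closure). Hypothesis~(ii) then yields a countable set $D_0=\{R_n:n\in\N\}\subseteq D$ with $\lambda^{-1}T\in\overline{D_0}$; the crucial book-keeping fact, used in all parts, is that for each $x$ one has $(\lambda^{-1}T)(x)\in\overline{\{R_n(x):n\in\N\}}^{\,w^*}$, because the W$^*$OT-limit is computed pointwise in the $w^*$-topology along a net valued in the countable set $D_0$. For part~(a) I would take the norm-separable subspace $M:=\overline{{\rm span}}\big(\bigcup_n R_n(X)\big)$; since $R_n(x)\in M$, the membership above gives $T(X)=(\lambda^{-1}T)(X)\subseteq\overline{M}^{\,w^*}$, which is $w^*$-separable.

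For part~(c), where $\lambda=1$, the key observation is that $\|P_\alpha\|\le 1$ forces $P_\alpha(B_X)\subseteq B_X$, so every $R\in D$ factors as $R=T\circ Q$ with $Q\in B_{\mathcal{L}(X)}$ and hence $R(B_X)\subseteq T(B_X)$. Choosing a countable norm-dense $C_n\subseteq R_n(B_X)\subseteq T(B_X)$ and setting $C:=\bigcup_n C_n\subseteq T(B_X)$, the pointwise membership (now with limit exactly $T$, since $\lambda=1$) gives $T(B_X)\subseteq\overline{C}^{\,w^*}$. Thus $C$ is a countable $w^*$-dense subset \emph{of} $T(B_X)$ and $T\in\mathcal{W^*S}(X,Y^*)$. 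This is precisely where $\lambda=1$ is indispensable: for $\lambda>1$ the limit is $\lambda^{-1}T$, so the countable set would only be $w^*$-dense in $\lambda^{-1}T(B_X)$ and need not sit inside $T(B_X)$, and $w^*$-separability does not pass to subsets.

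For part~(b) I would run the same construction assuming $V\subseteq\mathcal{W}(X,Y^*)$ and upgrade to norm-separability via weak compactness. Each $R_n=T\circ Q_n$ is weakly compact with norm-separable range, so $R_n(B_X)$ is convex and norm-separable, whence its norm-closure $K_n$ coincides with its weak-closure (Mazur) and is weakly compact; moreover $R_n(B_X)\subseteq T(B_X)$ gives $K_n\subseteq W:=\overline{T(B_X)}^{\,w}$, which is weakly compact because $T$ is. Hence $\bigcup_n\lambda K_n$ is a bounded, norm-separable, relatively weakly compact subset of $\lambda W$, so by the Krein--Smulian theorem its closed convex hull $K^*$ is a norm-separable weakly compact (in particular $w^*$-closed) set. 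The pointwise membership then forces $T(B_X)\subseteq\overline{\bigcup_n\lambda K_n}^{\,w^*}\subseteq K^*$, so $T(B_X)$ is a subset of the norm-separable set $K^*$ and is therefore norm-separable; consequently $T(X)=\bigcup_k kT(B_X)$ is norm-separable and $T\in\mathcal{S}(X,Y^*)$.

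The step I expect to be the main obstacle is this final upgrade in~(b): the common construction only places $T(B_X)$ inside the $w^*$-closure of a norm-separable set, and $w^*$-separability is not inherited by subsets, so one cannot conclude directly. The device that resolves it is to manufacture, through Mazur and the Krein--Smulian theorem, an honest \emph{norm}-separable weakly compact set $K^*$ containing $T(B_X)$; norm-separability, being a metric notion, is then automatically inherited. A secondary point needing care is tracking the scalar $\lambda$, which is harmless in~(a) and~(b) but genuinely confines the stronger conclusion of~(c) to the case $\lambda=1$.
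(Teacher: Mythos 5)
Your proof is correct and follows essentially the same route as the paper's: approximate $I_X$ in SOT by separable-range operators, use hypothesis~(i) to push the composites into~$V$, apply convex countable tightness in W$^*$OT to extract a countable family $\{R_n\}$, and evaluate pointwise; your rescaling by $\lambda^{-1}$ and your use of Mazur and Krein--Smulian in~(b) are cosmetic variants of the paper's direct argument (which instead takes the weak closure of a countable subset of $T(\lambda B_X)$). The only point to tighten is in~(a): to conclude that $T(X)$ itself is $w^*$-separable, rather than merely contained in the $w^*$-separable set $\overline{M}^{\,w^*}$, observe that the countable $w^*$-dense set can be chosen inside ${\rm span}\big(\bigcup_n R_n(X)\big)\subseteq T(X)$ --- the same precaution you correctly insist on in part~(c).
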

\begin{proof} Fix $T\in V$ and let us prove that $T(X)$ is $w^*$-separable.
Consider the map
$$
	\hat{T}: B_{\mathcal{L}(X)} \to V,
	\quad
	\hat{T}(S):=T \circ S.
$$
Let $I_X\in \mathcal{L}(X)$ be the identity operator and write 
$$
	W:=\hat{T}(\lambda B_{\mathcal{L}(X)}\cap \mathcal{S}(X)).
$$
Since $X$ has the $\lambda$-BSAP, the SOT-to-SOT continuity of~$\hat{T}$ implies that
$$
	T=\hat{T}(I_X) \in \overline{W}^{\text{SOT}} \sub \overline{W}^{\text{W$^*$OT}}.
$$
Therefore, since $W$ is convex and $V$ has convex countable tightness for W$^*$OT, we have 
$$
	T \in \overline{U}^{\text{W$^*$OT}}
$$
for some countable set $U \sub W$. 

Let $\mathcal{A} \sub \lambda B_{\mathcal{L}(X)}\cap \mathcal{S}(X)$
be a countable set such that $U=\hat{T}(\mathcal{A})$. For each $S\in \mathcal{A}$ we fix a countable set
$D_S \sub B_X$ such that $S(B_X) \sub \overline{S(D_S)}^{\|\cdot\|}$. Then
$$
	H:=\bigcup_{S\in \mathcal{A}}S(D_S)
$$
is a countable subset of~$\lambda B_X$ and so $T(H)$ is a countable subset of~$\lambda T(B_X)$. We claim that
\begin{equation}\label{eqn:ballH}
	T(B_X) \sub \overline{T(H)}^{w^*}.
\end{equation}

Indeed, take any $x\in B_X$. For each $S\in \mathcal{A}$ we have $S(x) \in \overline{S(D_S)}^{\|\cdot\|} \sub \overline{H}^{\|\cdot\|}$
and so $T(S(x)) \in \overline{T(H)}^{\|\cdot\|} \sub \overline{T(H)}^{w^*}$. Since $T\in \overline{\hat{T}(\mathcal{A})}^{\text{W$^*$OT}}$, we conclude
that
$$
	T(x) \in \overline{\{T(S(x)): \, S \in \mathcal{A}\}}^{w^*}\sub \overline{T(H)}^{w^*}.
$$
This proves~\eqref{eqn:ballH}. Hence $T(X) \sub \overline{T(\bigcup_{n\in \N} nH)}^{w^*}$
and so $T(X)$ is $w^*$-separable. 

(b) Since $T$ is weakly compact and $H \sub \lambda B_X$, the set $K:=\overline{T(H)}^{w}$ is weakly compact.
Observe that $K$ is norm separable because $H$ is countable. Moreover, since $K$ is weakly compact, it is also   
$w^*$-compact and $K=\overline{T(H)}^{w^*}$. From~\eqref{eqn:ballH} it follows that $T(B_X)$ is norm separable, that is, $T$ has 
norm separable range.

(c) Observe that if $\lambda=1$ then $H \sub B_X$ and \eqref{eqn:ballH} implies that $T(B_X)$ is $w^*$-separable.
\end{proof}

\begin{cor}\label{cor:SCPcct}
Let $X$ and $Y$ be Banach spaces such that $X\tenp Y$ has property~(C)
and $X$ has the $\lambda$-BSAP for some $\lambda\geq 1$. The following statements hold: 
\begin{enumerate}
\item[(a)] Every element of $\mathcal{L}(X,Y^*)$ has $w^*$-separable range.
\item[(b)] $\mathcal{W}(X,Y^*) \sub \mathcal{S}(X,Y^*)$.
\item[(c)] If $\lambda=1$, then $\mathcal{L}(X,Y^*) = \mathcal{W^*S}(X,Y^*)$.
\end{enumerate}  
\end{cor}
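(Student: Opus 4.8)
The plan is to obtain all three statements from Theorem~\ref{theo:LEMMA}, applied to a suitable convex set $V\subseteq\mathcal{L}(X,Y^*)$ chosen according to the operator under scrutiny. The bridge between the hypothesis of the corollary and hypothesis~(ii) of the theorem is the identification $(X\pten Y)^*\cong\mathcal{L}(X,Y^*)$ from Subsection~\ref{subsection:pten}: under it, the $w^*$-topology of $(X\pten Y)^*$ agrees with W$^*$OT on bounded subsets of $\mathcal{L}(X,Y^*)$, and $B_{(X\pten Y)^*}$ corresponds to $B_{\mathcal{L}(X,Y^*)}$. Since $X\pten Y$ has property~(C), the compact convex set $(B_{(X\pten Y)^*},w^*)$ has convex countable tightness; transporting this across the identification and across the affine homeomorphism $u\mapsto ru$, every ball $rB_{\mathcal{L}(X,Y^*)}$ has convex countable tightness with respect to W$^*$OT.

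For (a) and (c), I fix $T\in\mathcal{L}(X,Y^*)$ and set $V:=rB_{\mathcal{L}(X,Y^*)}$ with $r:=\max\{\|T\|,1\}$, so that $T\in V$. This $V$ is convex and satisfies hypothesis~(i), since $\|T'\circ S\|\leq\|T'\|\,\|S\|\leq r$ for every $T'\in V$ and $S\in B_{\mathcal{L}(X)}$, while hypothesis~(ii) was verified above. Conclusion~(a) of Theorem~\ref{theo:LEMMA} then gives that $T$ has $w^*$-separable range, and when $\lambda=1$ conclusion~(c) gives $T\in\mathcal{W^*S}(X,Y^*)$. As $T$ is arbitrary, statements (a) and (c) follow (the nontrivial inclusion in (c) being $\mathcal{L}(X,Y^*)\subseteq\mathcal{W^*S}(X,Y^*)$).

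For (b), I fix $T\in\mathcal{W}(X,Y^*)$ and take $V:=\mathcal{W}(X,Y^*)\cap rB_{\mathcal{L}(X,Y^*)}$ with $r:=\max\{\|T\|,1\}$. As the intersection of the linear subspace $\mathcal{W}(X,Y^*)$ with a ball, $V$ is convex and contains $T$. Hypothesis~(i) holds because, for $T'\in V$ and $S\in B_{\mathcal{L}(X)}$, one has $S(B_X)\subseteq B_X$ and hence $(T'\circ S)(B_X)=T'(S(B_X))\subseteq T'(B_X)$, which is relatively weakly compact; thus $T'\circ S$ is weakly compact, and moreover $\|T'\circ S\|\leq r$. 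Hypothesis~(ii) holds because $V$ is a convex subset of $rB_{\mathcal{L}(X,Y^*)}$ and convex countable tightness is inherited by convex subsets (immediate from the definition: for a convex $D\subseteq V$ and $x\in\overline{D}$, the set $D$ is still a convex subset of $rB_{\mathcal{L}(X,Y^*)}$, so a countable witnessing $D_0\subseteq D$ exists). Since $V\subseteq\mathcal{W}(X,Y^*)$, conclusion~(b) of Theorem~\ref{theo:LEMMA} yields $V\subseteq\mathcal{S}(X,Y^*)$, hence $T\in\mathcal{S}(X,Y^*)$, proving $\mathcal{W}(X,Y^*)\subseteq\mathcal{S}(X,Y^*)$.

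The argument is essentially a bookkeeping of which $V$ to feed into Theorem~\ref{theo:LEMMA}, so I do not expect a genuine obstacle. The only point demanding care is the verification of hypothesis~(ii): one must remember that property~(C) furnishes convex countable tightness only on the ball (equivalently, on bounded sets, where W$^*$OT and $w^*$ coincide), which is why the argument proceeds ball by ball rather than on all of $\mathcal{L}(X,Y^*)$ at once, and why for (b) one exploits the stability of convex countable tightness under passing to convex subsets.
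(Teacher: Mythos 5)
Your proposal is correct and follows essentially the same route as the paper, which simply applies Theorem~\ref{theo:LEMMA} with $V=B_{\mathcal{L}(X,Y^*)}$ for (a) and (c) and with $V=B_{\mathcal{L}(X,Y^*)}\cap\mathcal{W}(X,Y^*)$ for (b); your use of the dilated ball $rB_{\mathcal{L}(X,Y^*)}$ is just an explicit substitute for the implicit rescaling of an arbitrary operator into the unit ball. The verifications of hypotheses (i) and (ii), including the transfer of convex countable tightness from $(B_{(X\pten Y)^*},w^*)$ to bounded sets in the W$^*$OT and its inheritance by convex subsets, are all sound.
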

\begin{proof}
Apply Theorem \ref{theo:LEMMA} with $V=B_{\mathcal{L}(X,Y^*)}$ for statements (a) and (c) and with $V=B_{\mathcal{L}(X,Y^*)}\cap \mathcal{W}(X,Y^*)$ for statement (b).
\end{proof}

\begin{rem}\label{rem:W}
{\rm Part~(b) of Theorem~\ref{theo:LEMMA} and Corollary~\ref{cor:SCPcct} should be compared with the fact that}
for arbitrary Banach spaces $X$ and $Y$ we have 
$$
	\mathcal{W^*S}(X,Y^*) \cap \mathcal{W}(X,Y^*) \sub \mathcal{S}(X,Y^*).
$$ 
\end{rem}
\begin{proof}
Let $T \in \mathcal{W^*S}(X,Y^*) \cap \mathcal{W}(X,Y^*)$.
Since $T(B_X)$ is $w^*$-separable, the set $K = \overline{T(B_X)}^w$ is $w^*$-separable as well. 
Since $K$ is weakly compact, the weak and weak$^*$ topologies coincide on~$K$,
hence $K$ is weakly separable, which is equivalent to being norm separable. Thus, $T$ has separable range.
\end{proof}

\begin{cor}\label{cor:J}
Let $X$ be a non-separable Banach space having the $\lambda$-BSAP for some $\lambda\geq 1$.
Then $X\hat\otimes_\pi X^*$ fails property~(C).
\end{cor}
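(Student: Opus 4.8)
The plan is to argue by contradiction: assume that $X\pten X^*$ has property~(C), and apply Corollary~\ref{cor:SCPcct} with the second factor taken to be $X^*$, so that the relevant operator space is $\mathcal{L}(X,X^{**})$. Since $X$ has the $\lambda$-BSAP by hypothesis, part~(a) of that corollary immediately yields that every operator in $\mathcal{L}(X,X^{**})$ has $w^*$-separable range. The whole difficulty then reduces to producing a single operator whose $w^*$-separable range forces separability of~$X$.

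The key step is to feed in the right test operator, and I would take the canonical embedding $J_X\colon X\to X^{**}$, which clearly belongs to $\mathcal{L}(X,X^{**})$. By the previous paragraph its range $J_X(X)$ is $w^*$-separable in~$X^{**}$. The point to observe is that the $w^*$-topology $\sigma(X^{**},X^*)$ restricted to $J_X(X)$ is carried by $J_X$ onto the weak topology $\sigma(X,X^*)$ of~$X$; hence the $w^*$-separability of $J_X(X)$ amounts exactly to the weak separability of~$X$.

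To finish, I would invoke the elementary fact that weak separability implies norm separability: if $\{x_n\}_{n\in\N}$ is weakly dense in~$X$, then $\overline{{\rm span}}(\{x_n:n\in\N\})$ is a norm-closed subspace, hence convex and norm-closed and therefore weakly closed, which contains a weakly dense set and so equals~$X$; being the norm closure of the span of a countable set, it is norm separable. Thus $X$ would be separable, contradicting the hypothesis, and the corollary follows.

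I do not anticipate any serious obstacle: once Corollary~\ref{cor:SCPcct}(a) is in hand, the argument collapses to the choice of the canonical embedding as the test operator together with the coincidence of weak and norm separability for Banach spaces. The only point that deserves a line of care is the identification of the induced topology on $J_X(X)$, namely that $\sigma(X^{**},X^*)|_{J_X(X)}$ corresponds under $J_X$ to $\sigma(X,X^*)$, which is what turns ``$w^*$-separable range'' into ``weakly separable~$X$''.
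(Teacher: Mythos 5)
Your proposal is correct and follows essentially the same route as the paper: apply Corollary~\ref{cor:SCPcct}(a) with $Y=X^*$ and observe that the canonical embedding $J\colon X\to X^{**}$ cannot have $w^*$-separable range, since $w^*$ and $w$ coincide on $J(X)$ and weak separability implies norm separability. The paper states this directly rather than by contradiction, but the content is identical.
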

\begin{proof}
The canonical embedding $J: X\to X^{**}$ fails to have $w^*$-separable range, because the topologies $w^*$ and $w$ coincide on~$J(X)$
and $X$ is non-separable. The conclusion follows from Corollary~\ref{cor:SCPcct}(a).
\end{proof}

By Proposition \ref{pro:embedding-lp-lq}, if $1<p,q<\infty$ are such that $1/p+1/q \geq 1$ and $\Gamma$ is any non-empty set, 
then $\ell_p(\Gamma)\tenp \ell_q(\Gamma)$ contains a subspace isometric to~$\ell_1(\Gamma)$, and so it cannot have property~(C)
whenever $\Gamma$ is uncountable. The last statement is a particular case of the following consequence of Corollary~\ref{cor:SCPcct}(b):

\begin{cor}\label{cor:reflexive}
Let $X$ and $Y$ be Banach spaces such that $X$ is reflexive and $\mathcal{L}(X,Y^*)\neq \mathcal{S}(X,Y^*)$.
Then $X\tenp Y$ fails property~(C).
\end{cor}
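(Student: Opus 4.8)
The plan is to argue by contradiction: assume that $X\pten Y$ has property~(C) and derive that $\mathcal{L}(X,Y^*)=\mathcal{S}(X,Y^*)$, which contradicts the standing hypothesis. The engine of the argument is Corollary~\ref{cor:SCPcct}(b). To invoke it I need two inputs: that $X$ enjoys the $\lambda$-BSAP for some $\lambda\geq 1$, and that every operator under consideration is weakly compact, so that the conclusion $\mathcal{W}(X,Y^*)\subseteq \mathcal{S}(X,Y^*)$ can be promoted to a statement about all of $\mathcal{L}(X,Y^*)$.

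Both inputs are furnished by reflexivity of~$X$. First I would recall that a reflexive space is WCG, hence a subspace of a WCG space, hence WLD (its dual ball is Eberlein, thus Corson); since WLD spaces have the $1$-SCP, Lemma~\ref{lem:SCPidentity} gives that $X$ has the $1$-BSAP. Secondly, because $B_X$ is weakly compact by reflexivity and every operator is weak-to-weak continuous, each $T\in \mathcal{L}(X,Y^*)$ maps $B_X$ onto a relatively weakly compact set; that is, $\mathcal{L}(X,Y^*)=\mathcal{W}(X,Y^*)$.

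With these two facts in hand, Corollary~\ref{cor:SCPcct}(b) applied to the pair $(X,Y)$ yields $\mathcal{W}(X,Y^*)\subseteq \mathcal{S}(X,Y^*)$ under the assumption that $X\pten Y$ has property~(C). Combining this with the identity $\mathcal{L}(X,Y^*)=\mathcal{W}(X,Y^*)$ gives $\mathcal{L}(X,Y^*)\subseteq \mathcal{S}(X,Y^*)$, and since the reverse inclusion is trivial we conclude $\mathcal{L}(X,Y^*)=\mathcal{S}(X,Y^*)$, contradicting the hypothesis. Hence $X\pten Y$ must fail property~(C). I do not expect a genuine obstacle here: the substantive work is already packaged in Corollary~\ref{cor:SCPcct}(b), and the only points to check are the standard implications \emph{reflexive} $\Rightarrow$ $1$-BSAP (through the WLD route above) and \emph{reflexive} $\Rightarrow$ every operator out of $X$ is weakly compact.
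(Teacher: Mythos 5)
Your argument is correct and is exactly the route the paper intends: the text introduces this corollary as a direct consequence of Corollary~\ref{cor:SCPcct}(b), with the two supporting facts being that a reflexive space is WLD (hence has the $1$-SCP and, by Lemma~\ref{lem:SCPidentity}, the $1$-BSAP) and that reflexivity forces $\mathcal{L}(X,Y^*)=\mathcal{W}(X,Y^*)$. Nothing is missing.
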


As an application, we obtain the following classical result (see \cite[Theorem~2]{kal74}):

\begin{cor}[Kalton]\label{cor:Kalton}
Let $X$ be a Banach space.  If $\mathcal{L}(X)$ is reflexive, then $X$ is separable.
\end{cor}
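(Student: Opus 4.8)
\emph{The plan} is to deduce Kalton's theorem from Corollary~\ref{cor:reflexive}, applied with $Y=X^*$, after first observing that the reflexivity of $\mathcal{L}(X)$ forces $X$ itself to be reflexive. To see this last point, I would fix any $x_0^*\in X^*$ with $\|x_0^*\|=1$ and consider the map $\iota\colon X\to \mathcal{L}(X)$ sending each $x$ to the rank-one operator $y\mapsto x_0^*(y)\,x$. A direct computation of the operator norm shows that $\|\iota(x)\|=\|x_0^*\|\,\|x\|=\|x\|$, so $\iota$ is an isometric embedding and its image is a closed subspace of $\mathcal{L}(X)$. Since reflexivity is inherited by closed subspaces, the reflexivity of $\mathcal{L}(X)$ yields that of $X$.

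Once $X$ is known to be reflexive, I would identify $X^{**}$ with $X$ and take $Y=X^*$, so that $Y^*=X^{**}=X$ and hence $\mathcal{L}(X,Y^*)=\mathcal{L}(X)$. By the discussion in Subsection~\ref{subsection:pten}, this space is precisely the dual $(X\pten X^*)^*$. Therefore $(X\pten X^*)^*=\mathcal{L}(X)$ is reflexive, and since a Banach space is reflexive exactly when its dual is, $X\pten X^*$ is reflexive as well; in particular it is WCG, hence WLD, and so it has property~(C).

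Finally I would invoke the contrapositive of Corollary~\ref{cor:reflexive}: as $X$ is reflexive and $X\pten X^*$ has property~(C), it must be that $\mathcal{L}(X,Y^*)=\mathcal{S}(X,Y^*)$, that is, $\mathcal{L}(X)=\mathcal{S}(X)$. In particular the identity operator $I_X$ has separable range, whence $X=I_X(X)$ is separable. The one step that genuinely drives the argument is the preliminary reduction to the reflexive case: only when $X$ is reflexive does $Y^*=X^{**}$ collapse to $X$, turning $\mathcal{L}(X,Y^*)$ into $\mathcal{L}(X)$ and making Corollary~\ref{cor:reflexive} applicable with $Y=X^*$. The remaining links—the duality behaviour of reflexivity, the implication from reflexivity to property~(C), and reading off separability from $I_X\in\mathcal{S}(X)$—are routine.
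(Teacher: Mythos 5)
Your proof is correct and follows essentially the same route as the paper: establish reflexivity of $X$ (the paper embeds $X^*$ into $\mathcal{L}(X)$ via rank-one operators rather than $X$, but this is an immaterial variation), identify $(X\pten X^*)^*$ with $\mathcal{L}(X)$ to conclude that $X\pten X^*$ is reflexive and hence has property~(C), and then apply Corollary~\ref{cor:reflexive} with $Y=X^*$. The only difference is that you spell out the intermediate implications (reflexive $\Rightarrow$ property~(C), and $I_X\in\mathcal{S}(X)$ $\Rightarrow$ $X$ separable) that the paper leaves implicit.
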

\begin{proof}
Since $Y:=X^*$ is isometric to a subspace of~$\mathcal{L}(X)$, both $X$ and~$Y$ are reflexive.
We also have $(X\tenp Y)^*=\mathcal{L}(X)$, so that $X\tenp Y$ is reflexive as well.
The conclusion now follows from Corollary~\ref{cor:reflexive}.
\end{proof}

\section{WLD spaces and projective tensor products}\label{section:WLDpten}

Bearing in mind that the product of countably many Corson compact spaces is Corson, an argument similar to that of
Proposition~\ref{pro:separable-pten-C} yields the following:

\begin{pro}\label{pro:separable-pten-WLD}
Let $X$ and $Y$ be Banach spaces. If $X$ is separable and $Y$ is WLD, then $X\pten Y$ is WLD.
\end{pro}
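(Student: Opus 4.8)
The plan is to adapt the proof of Proposition~\ref{pro:separable-pten-C} essentially verbatim, replacing the role of convex countable tightness with that of Corson compactness. Recall that a Banach space is WLD precisely when its dual ball, endowed with the weak$^*$-topology, is Corson; so the goal is to show that $(B_{(X\pten Y)^*},w^*)$ is Corson.

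First I would fix a dense sequence $(x_n)_{n\in\N}$ in~$B_X$ (available since $X$ is separable) and reuse the same map
\[
	\xi: (B_{(X\pten Y)^*},w^*) \to (B_{Y^*},w^*)^{\N},
	\quad
	\xi(S):=(S_X(x_n))_{n\in \N},
\]
as in Proposition~\ref{pro:separable-pten-C}. Exploiting that the weak$^*$-topology on $(X\pten Y)^*$ agrees with the W$^*$OT on bounded subsets of $\mathcal{L}(X,Y^*)$, one checks that $\xi$ is a continuous injection: injectivity follows from the density of~$(x_n)$ together with the continuity of each~$S_X$, while W$^*$OT-convergence translates into coordinatewise weak$^*$-convergence, giving continuity into the product. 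Since $B_{(X\pten Y)^*}$ is weak$^*$-compact by Banach--Alaoglu, this continuous injection is automatically a homeomorphic embedding onto a compact, hence closed, subset of $(B_{Y^*},w^*)^{\N}$.

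To finish, I would invoke that $(B_{Y^*},w^*)$ is Corson (as $Y$ is WLD) and that a countable product of Corson compact spaces is Corson---the fact highlighted in the text, which holds because countably many $\Sigma$-product index sets amalgamate into a single one whose $\Sigma$-product still consists of countably supported functions. As Corson compactness is inherited by closed subspaces, the image $\xi(B_{(X\pten Y)^*})$ is Corson, and therefore so is $(B_{(X\pten Y)^*},w^*)$; this yields that $X\pten Y$ is WLD. I do not anticipate any genuine obstacle: the only point meriting attention is that $\xi$ be a homeomorphism onto its image rather than merely continuous, but this is precisely the verification already performed for Proposition~\ref{pro:separable-pten-C}, where compactness of the domain upgrades the continuous injection to an embedding.
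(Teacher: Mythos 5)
Your proposal is correct and follows exactly the argument the paper intends: the same embedding $\xi$ from Proposition~\ref{pro:separable-pten-C}, combined with the facts that countable products of Corson compacta are Corson and that Corson compactness passes to closed subspaces. No gaps.
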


This can also be obtained from the main result of this section, which is the following characterization of WLD projective tensor products:

\begin{theorem}\label{theorem:caractWLD}
Let $X$ and $Y$ be Banach spaces. The following statements are equivalent:
\begin{enumerate}
\item[(i)] $X\pten Y$ is WLD.
\item[(ii)] $X$ and $Y$ are WLD, $\mathcal{L}(X,Y^*)=\mathcal{S}(X,Y^*)$ and $\mathcal{L}(Y,X^*)=\mathcal{S}(Y,X^*)$.
\end{enumerate}
\end{theorem}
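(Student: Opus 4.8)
The plan is to prove the two implications separately, using in both directions Markushevich bases $\{(x_i,x_i^*):i\in I\}$ of $X$ and $\{(y_j,y_j^*):j\in J\}$ of $Y$ (available once $X$ and $Y$ are known to be WLD), together with the fundamental support property of WLD spaces: for each $x^*\in X^*$ the set $\{i\in I:x^*(x_i)\neq 0\}$ is countable, and symmetrically for $Y$. Throughout I identify $(X\pten Y)^*$ with $\mathcal{B}(X,Y)\cong\mathcal{L}(X,Y^*)\cong\mathcal{L}(Y,X^*)$, so that the $w^*$-topology on bounded sets becomes the W$^*$OT and ``$X\pten Y$ is WLD'' means that $(B_{\mathcal{L}(X,Y^*)},\mathrm{W}^*\mathrm{OT})$ is Corson.

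For (ii)$\Rightarrow$(i), I would produce an explicit embedding into a $\Sigma$-product. Since $\mathrm{span}\{x_i\}$ and $\mathrm{span}\{y_j\}$ are dense, every $S\in\mathcal{B}(X,Y)$ is determined by the values $S(x_i,y_j)$, so the rescaled grid map
\[
	\Phi(S):=\left(\frac{S(x_i,y_j)}{\|x_i\|\,\|y_j\|}\right)_{(i,j)\in I\times J}
\]
is an injective, $w^*$-to-$\tau_p(I\times J)$ continuous linear map from $(X\pten Y)^*$ into $\ell_\infty(I\times J)$ whose image is bounded by $\|S\|$ coordinatewise. It remains to check that each $\Phi(S)$ has countable support, i.e.\ that $\{(i,j):S(x_i,y_j)\neq 0\}$ is countable, so that $\Phi$ lands in $\ell_\infty^c(I\times J)$. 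Here I use that $T:=S_X$ has separable range: fixing a countable norm-dense set in $T(X)$ and using norm-continuity of the evaluations $y^*\mapsto y^*(y_j)$, one sees, since $Y$ is WLD, that $J_1:=\bigcup\{\,\{j:y^*(y_j)\neq0\}:y^*\in T(X)\}$ is countable; hence $S(x_i,y_j)=\langle T(x_i),y_j\rangle=0$ whenever $j\notin J_1$. For each fixed $j\in J_1$ the set $\{i:S(x_i,y_j)=\langle S_Y(y_j),x_i\rangle\neq 0\}$ is countable because $S_Y(y_j)\in X^*$ and $X$ is WLD. Thus the support is a countable union of countable sets, and $\Phi$ witnesses that $X\pten Y$ is WLD.

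For (i)$\Rightarrow$(ii), the first step is easy: fixing norm-one $y_0\in Y$ and $x_0\in X$, the maps $x\mapsto x\otimes y_0$ and $y\mapsto x_0\otimes y$ are isometric embeddings of $X$ and $Y$ as closed subspaces of $X\pten Y$, so $X$ and $Y$ are WLD because the WLD property passes to subspaces. The real work is $\mathcal{L}(X,Y^*)=\mathcal{S}(X,Y^*)$ (the other equality being symmetric). Being WLD, $X\pten Y$ has property~(C), and each of $X,Y$ has the $1$-SCP, hence the $1$-BSAP by Lemma~\ref{lem:SCPidentity}. Applying Corollary~\ref{cor:SCPcct}(a) with the roles of $X$ and $Y$ interchanged (and using $Y\pten X\cong X\pten Y$), I obtain that \emph{every} operator in $\mathcal{L}(Y,X^*)$ has $w^*$-separable range. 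Now I fix $T\in\mathcal{L}(X,Y^*)$, write $T=S_X$, and set $R:=S_Y\in\mathcal{L}(Y,X^*)$, so that $\langle R(y),x\rangle=\langle T(x),y\rangle$. Choose $(y_m)\subseteq B_Y$ with $\{R(y_m)\}$ $w^*$-dense in $R(B_Y)$; since $X$ is WLD, $I_1:=\bigcup_m\{i:\langle R(y_m),x_i\rangle\neq0\}$ is countable. For $i\notin I_1$ the $w^*$-closed hyperplane $\{\xi\in X^*:\xi(x_i)=0\}$ contains every $R(y_m)$, hence contains $\overline{R(B_Y)}^{w^*}$; therefore $\langle T(x_i),y\rangle=\langle R(y),x_i\rangle=0$ for all $y\in Y$, i.e.\ $T(x_i)=0$. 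Consequently $T(X)=\overline{\mathrm{span}}\{T(x_i):i\in I\}=\overline{\mathrm{span}}\{T(x_i):i\in I_1\}$ is norm-separable.

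I expect the main obstacle to be precisely this last upgrade in (i)$\Rightarrow$(ii): property~(C) together with the $1$-BSAP only yields $w^*$-separable range via Corollary~\ref{cor:SCPcct}, and $w^*$-separability of the range is in general strictly weaker than norm separability (for instance, $(B_{\ell_\infty},w^*)$ is $w^*$-metrizable but not norm separable). The idea that resolves it is to not attempt to improve the range of $T$ directly, but instead to feed the $w^*$-separable range of the \emph{transpose} $R=S_Y$ into the Markushevich-basis support property of the \emph{domain} $X$; this forces $T$ to vanish on all but countably many basis vectors and hence to have genuinely norm-separable range. It is worth noting that each direction uses only one of the two separability hypotheses in (ii) together with the WLD-ness of both factors, the full strength of (ii) being recovered by symmetry.
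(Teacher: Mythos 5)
Your proposal is correct in both directions. The implication (ii)$\Rightarrow$(i) is essentially the paper's argument: the same grid embedding $\Phi$ built from Markushevich bases (Lemma~\ref{lem:for2tensors}), with the same mechanism for countability of the support. A small bonus of your version is that you bound the support by $\bigcup_{j\in J_1} C_j\times\{j\}$ using only $S_X\in\mathcal{S}(X,Y^*)$ together with the countable-support property of the basis of~$X$, whereas the paper invokes both separability hypotheses to trap the support in a product $A\times B$; combined with (i)$\Rightarrow$(ii) your observation shows that, once $X$ and $Y$ are WLD, the two operator conditions in~(ii) are in fact equivalent to each other. The genuine divergence is in (i)$\Rightarrow$(ii). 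Both you and the paper start from Corollary~\ref{cor:SCPcct}(a), which only delivers $w^*$-separable range, and you correctly identify the upgrade to norm separability as the crux. The paper resolves it with Lemma~\ref{cor:WLD-WS}: for $T\in\mathcal{L}(X,Y^*)$ with $X,Y$ WLD, it shows $Z:=\overline{T(X)}$ is itself WLD (via injectivity of $T^*$ on $Z^*$) and has $w^*$-separable dual (via the $\Sigma$-embedding of $Y^*$ restricted to the $w^*$-separable set $Z$), whence $Z$ is separable. You instead apply Corollary~\ref{cor:SCPcct}(a) to the \emph{transpose} $R=S_Y\in\mathcal{L}(Y,X^*)$ and feed its $w^*$-separable range into the countable-support property of the Markushevich basis of $X$, forcing $T(x_i)=0$ for all $i$ outside a countable set and hence $T(X)\subseteq\overline{\mathrm{span}}\{T(x_i):i\in I_1\}$. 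Both routes are valid; the paper's lemma is a reusable general statement about operators between WLD spaces (used again elsewhere), while your transpose trick avoids it entirely and stays inside the Markushevich-basis framework you already set up for the converse. (One cosmetic point: Corollary~\ref{cor:SCPcct}(a) gives a countable set $\{y_m\}\subseteq Y$, not necessarily in $B_Y$, with $R(Y)\subseteq\overline{\{R(y_m)\}}^{w^*}$; this is exactly what your hyperplane argument needs, so the imprecision is harmless.)
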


Our proof of Theorem~\ref{theorem:caractWLD} will use two lemmata:

\begin{lem}\label{cor:WLD-WS}
Let $X$ and $Y$ be WLD Banach spaces and $T\in \mathcal{L}(X,Y^*)$. Then $T(X)$ is separable if (and only if) it is $w^*$-separable.
\end{lem}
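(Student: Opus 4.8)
The `only if' part is trivial, since the norm topology is finer than the weak$^*$-topology on~$Y^*$. So the plan is to show that $w^*$-separability of $T(X)$ forces its norm separability, and I would split the argument into two stages, reducing first to the case of separable~$Y$ and then treating that case directly.

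\emph{Stage 1 (reduction to separable~$Y$).} Since $Y$ is WLD, it admits a Markushevich basis $\{(y_j,y_j^*):j\in J\}$ for which every $y^*\in Y^*$ has countable support $\{j\in J:\langle y^*,y_j\rangle\neq 0\}$. I would fix a countable $w^*$-dense set $\{T(x_n):n\in\N\}$ in $T(X)$ and put $S:=\bigcup_{n}\{j:\langle T(x_n),y_j\rangle\neq 0\}$, a countable subset of~$J$. For each fixed $j\notin S$ the evaluation $y^*\mapsto \langle y^*,y_j\rangle$ is $w^*$-continuous and vanishes on the $w^*$-dense set $\{T(x_n):n\in\N\}$, hence $\langle T(x),y_j\rangle=0$ for every $x\in X$. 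Thus every $T(x)$ annihilates $Z:=\overline{{\rm span}}\{y_j:j\notin S\}$, so $T(X)\subseteq Z^{\perp}$, which is isometric to $(Y/Z)^*$ via the adjoint of the quotient map. The quotient $Y/Z$ is separable, being the closed span of the countably many classes $\{q(y_j):j\in S\}$. Composing with this isometric identification, I reduce to proving: if $X$ is WLD, $Y$ is separable and $T\in\mathcal{L}(X,Y^*)$, then $T(X)$ is norm separable (here $w^*$-separability is automatic).

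\emph{Stage 2 (separable~$Y$).} Regarding $Y\subseteq Y^{**}$, the set $T^*(Y)$ is a separable subset of $X^*$; set $W:=\overline{{\rm span}}(T^*(Y))$ and $K:=\overline{B_W}^{w^*}\subseteq B_{X^*}$. Then $K$ is $w^*$-compact and $w^*$-separable (a countable norm-dense subset of $B_W$ is $w^*$-dense), and since $X$ is WLD, $(B_{X^*},w^*)$ is Corson; as separable subsets of Corson compacta are metrizable, $K$ is metrizable, so $C(K)$ is separable. I would then consider the bounded operator $\Phi:X\to C(K)$, $\Phi(x)(k):=\langle k,x\rangle$, whose range lies in the separable space $C(K)$ and is therefore separable. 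The crucial estimate is that for every $x\in X$,
$$\|T(x)\|=\sup_{y\in B_Y}|\langle T^*(y),x\rangle|\leq \|T\|\,\|\Phi(x)\|_{C(K)},$$
since each $T^*(y)$ with $y\in B_Y$ lies in $\|T\|\cdot K$. This shows that $T$ factors as $T=R\circ\Phi$ for a well-defined bounded linear map $R$ on $\Phi(X)$, whence $T(X)=R(\Phi(X))$ is norm separable.

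The main obstacle is Stage~2, and within it the crux is manufacturing a \emph{metrizable} compact that captures the norm of~$T$: the WLD hypothesis on~$X$ is exactly what upgrades the $w^*$-separable set~$K$ to a metrizable one, and the displayed inequality is what transfers separability from $C(K)$ back to the range of~$T$. Stage~1 is more routine but indispensable, as it is the only place where the $w^*$-separability of $T(X)$, together with the WLD-ness of~$Y$, is used.
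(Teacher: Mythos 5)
Your proof is correct, but it takes a genuinely different route from the paper's. The paper works directly with the closed range $Z:=\overline{T(X)}^{\|\cdot\|}$: since $T:X\to Z$ has dense range, $T^*:Z^*\to X^*$ is injective and $w^*$-to-$w^*$ continuous, so $Z$ inherits WLD-ness from~$X$; then the injective $w^*$-to-$\tau_p(\Gamma)$ continuous operator $\Phi:Y^*\to\ell_\infty^c(\Gamma)$ coming from the WLD-ness of~$Y$, restricted to the $w^*$-separable set~$Z$, lands in $\ell_\infty(\Gamma_0)$ for a countable $\Gamma_0\subseteq\Gamma$, which gives a countable separating family in~$Z^*$, i.e.\ $Z^*$ is $w^*$-separable; the conclusion then follows from the cited fact that a WLD space with $w^*$-separable dual is separable. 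Your Stage~1 is a close cousin of the paper's middle step (you use the coordinates of a Markushevich basis of~$Y$ where the paper uses the coordinates of~$\Phi$, and the ``vanishing off a countable support set'' argument is the same), but your endgame is different: instead of quoting ``WLD $+$ $w^*$-separable dual $\Rightarrow$ separable'', you reduce to separable~$Y$ and then exploit the Corson-ness of $(B_{X^*},w^*)$ to build a metrizable compact~$K$ and factor $T$ through the separable space $C(K)$ via the estimate $\|T(x)\|\leq\|T\|\,\|\Phi(x)\|_{C(K)}$. Your version is longer but more self-contained, replacing one black-box citation by an explicit factorization; the paper's is a five-line deduction from two standard facts about WLD spaces. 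Both arguments are sound, and each uses the WLD-ness of $X$ and of $Y$ at exactly one point.
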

\begin{proof}
Define $Z:=\overline{T(X)}^{\|\cdot\|}$. If we consider $T$ as an operator from~$X$ to~$Z$, then $T$ has dense range
and so $T^*:Z^*\to X^*$ is injective. Therefore, $Z$ is WLD. 

Let $\Phi: Y^*\to \ell_\infty^c(\Gamma)$ be an
injective and $w^*$-to-$\tau_p(\Gamma)$ continuous operator, for some non-empty set~$\Gamma$. 
Since $Z$ is $w^*$-separable (because $T(X)$ is $w^*$-separable), the restriction $\Phi|_Z$ is an injective operator with values in $\ell_\infty(\Gamma_0)$ 
(as a subspace of $\ell_\infty^c(\Gamma)$) for some countable subset $\Gamma_0 \sub \Gamma$,
hence $Z^*$ is $w^*$-separable. It follows that $Z$ is separable (see, e.g., \cite[Proposition~5.40]{fab-alt-JJ}).
\end{proof}

\begin{lem}\label{lem:for2tensors}
Let $X$ and $Y$ be Banach spaces and let $\Gamma_X \sub B_X$ and $\Gamma_Y \sub B_Y$ be sets
such that $X=\overline{{\rm span}}(\Gamma_X)$ and $Y=\overline{{\rm span}}(\Gamma_Y)$.
Define $\Gamma:=\Gamma_X \times \Gamma_Y$ and
$$
	\Phi: \mathcal{B}(X,Y) = (X\pten Y)^* \to \ell_\infty(\Gamma)
$$	
by the formula
$$
	\Phi(S):=(S(x,y))_{(x,y)\in \Gamma}
	\quad
	\mbox{for all }S\in \mathcal{B}(X,Y).
$$
Then $\Phi$ is an operator which is injective and $w^*$-to-$\tau_p(\Gamma)$ continuous.
\end{lem}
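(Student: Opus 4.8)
The plan is to verify in turn the three asserted properties of~$\Phi$ --- that it is a bounded operator, that it is injective, and that it is $w^*$-to-$\tau_p(\Gamma)$ continuous --- each of which reduces to the basic interplay between $\mathcal{B}(X,Y)$, its isometric identification with $(X\pten Y)^*$, and the elementary tensors $x\otimes y$. First I would check that $\Phi$ is a well-defined bounded operator. Since $\Gamma=\Gamma_X\times\Gamma_Y\subseteq B_X\times B_Y$, for each $(x,y)\in\Gamma$ and each $S\in\mathcal{B}(X,Y)$ we have $|S(x,y)|\leq\|S\|\,\|x\|\,\|y\|\leq\|S\|$; hence $\Phi(S)\in\ell_\infty(\Gamma)$ with $\|\Phi(S)\|_\infty\leq\|S\|$. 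Linearity in~$S$ is immediate from the definition, so $\Phi$ is an operator of norm at most~$1$.

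For injectivity, the key is a two-fold density argument. Suppose $\Phi(S)=0$, so that $S(x,y)=0$ for every $x\in\Gamma_X$ and $y\in\Gamma_Y$. Fixing an arbitrary $y\in\Gamma_Y$, the map $x\mapsto S(x,y)$ is a continuous linear functional on~$X$ vanishing on~$\Gamma_X$, hence on $\overline{{\rm span}}(\Gamma_X)=X$; this yields $S(x,y)=0$ for all $x\in X$ and all $y\in\Gamma_Y$. Fixing now an arbitrary $x\in X$, the map $y\mapsto S(x,y)$ is a continuous linear functional on~$Y$ vanishing on~$\Gamma_Y$, hence on $\overline{{\rm span}}(\Gamma_Y)=Y$, and therefore $S=0$.

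For the continuity statement, I would invoke that the $w^*$-topology on $(X\pten Y)^*$ is exactly the topology of pointwise evaluation on $X\pten Y$, together with the identity $S(x,y)=\langle S,x\otimes y\rangle$. Thus for each fixed coordinate $(x,y)\in\Gamma$ the map $S\mapsto\Phi(S)(x,y)=\langle S,x\otimes y\rangle$ is $w^*$-continuous by definition; since $\tau_p(\Gamma)$ is generated precisely by these coordinate evaluations, $\Phi$ is $w^*$-to-$\tau_p(\Gamma)$ continuous.

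I do not expect any genuine obstacle here: the lemma is a routine unwinding of the definitions recorded in Section~\ref{section:Preliminaries}. The only step carrying any content is the injectivity argument, where density must be applied separately in each variable; the point to be careful about is the order --- one first annihilates the $X$-variable on the \emph{whole} of~$\Gamma_Y$, and only then annihilates the $Y$-variable --- since a single simultaneous density passage is not available.
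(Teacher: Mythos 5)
Your proof is correct and is exactly the routine verification the paper has in mind (its own proof reads simply ``Straightforward''): boundedness from $\Gamma\sub B_X\times B_Y$, injectivity by applying density of ${\rm span}(\Gamma_X)$ and ${\rm span}(\Gamma_Y)$ one variable at a time, and continuity from the identity $S(x,y)=\langle S,x\otimes y\rangle$ together with the fact that $\tau_p(\Gamma)$ is generated by the coordinate evaluations. No issues.
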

\begin{proof}
Straightforward.
\end{proof}

\begin{proof}[Proof of Theorem~\ref{theorem:caractWLD}]
(i)$\impli$(ii) This follows from Corollary~\ref{cor:SCPcct} and Lemma~\ref{cor:WLD-WS}, because
the property of being WLD is hereditary, WLD spaces have property~(C) (see Subsection~\ref{subsection:WLD})
and the $1$-BSAP (combine \cite[Theorem~3.42]{fab-alt-JJ} and Lemma~\ref{lem:SCPidentity}).

(ii)$\impli$(i) We have to prove the existence of an injective and $w^*$-to-$\tau_p(\Gamma)$ continuous operator 
$\Phi:(X\pten Y)^*\to \ell_\infty^c(\Gamma)$ for certain non-empty set~$\Gamma$. 
Since $X$ is WLD, it admits a Markushevich basis $\{(x_i,x_i^*):i\in I\}$ and, for every $x^*\in X^*$, 
the set $\{i\in I:x^*(x_i)\neq 0\}$ is countable (see Subsection~\ref{subsection:WLD}).
By the same reason, $Y$ admits a Markushevich basis $\{(y_j,y_j^*):j\in J\}$ and, for every $y^*\in Y^*$, 
the set $\{j\in J:y^*(y_j)\neq 0\}$ is countable. We can assume that $\|x_i\| \leq 1$ for every $i\in I$ and that $\|y_j\|\leq 1$ for every $j\in J$. 

Write $\Gamma_X:=\{x_i:i\in I\}$ and $\Gamma_Y:=\{y_j:j\in J\}$. Define $\Gamma:=\Gamma_X\times \Gamma_Y$ and 
$$
	\Phi: \mathcal{B}(X,Y)=(X\pten Y)^* \to \ell_\infty(\Gamma)
$$ 
as in Lemma~\ref{lem:for2tensors}. Let us prove that $\Phi(S)\in \ell_\infty^c(\Gamma)$ for every $S\in (X\pten Y)^*$.
On the one hand, since $S_X$ has separable range, we have $S_X(X)\subseteq \overline{C}^{w^*}$ for some countable set $C =\{y_n^*:n\in \N\}\subseteq Y^*$. 
For each $n\in\mathbb N$ we choose a countable set $B_n\subseteq \Gamma_Y$ such that $y_n^*(y)=0$ whenever $y\in \Gamma_Y\setminus B_n$. 
Then $B:=\bigcup\limits_{n\in\mathbb N} B_n \sub \Gamma_Y$ is countable and $y_n^*(y)=0$ for every $y\in \Gamma_Y\setminus B$ and 
for every $n \in \N$. Since $S_X(X)\subseteq \overline{C}^{w^*}$ we deduce that
$$	
	S(x,y)=S_X(x)(y)=0\ \mbox{ for every }  x\in X \mbox{ and for every }y\in \Gamma_Y\setminus B.
$$
On the other hand, the same argument applied to~$S_Y$ ensures the existence of a countable set $A\subseteq \Gamma_X$ satisfying
$$
	S(x,y)=S_Y(y)(x)=0\ \mbox{ for every } x\in \Gamma_X\setminus A \mbox{ and for every } y\in Y.
$$
Clearly, $A\times B \sub \Gamma$ is countable and $S(x,y)=0$ whenever $(x,y)\in \Gamma \setminus A\times B$.
This shows that $\Phi(S)\in \ell_\infty^c(\Gamma)$. The proof is finished.
\end{proof}

We finish this section with an application of Theorem~\ref{theorem:caractWLD} to Lebesgue-Bochner spaces. 
Recall that if $\mu$ is a finite measure and $Y$ is a Banach space, then the Lebesgue-Bochner space $L_1(\mu,Y)$
is isometrically isomorphic to $L_1(\mu)\pten Y$ (see, e.g., \cite[Section~2.3]{rya}).
We will use the following result (see \cite[Corollary~2.4]{rod7}), which will also be needed in Section~\ref{section:WLDiten}.

\begin{theo}\label{theo:JR}
Let $Y$ be a WLD Banach space. Then $(B_{Y^*},w^*)$ has property (M) if and only if $\mathcal{L}(L_1(\mu),Y^*)=\mathcal{S}(L_1(\mu),Y^*)$
for every finite measure~$\mu$.
\end{theo}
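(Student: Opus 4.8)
The plan is to prove both implications through the classical identification of operators from $L_1(\mu)$ into a dual space with their weak$^*$-measurable representing functions, translating ``separable range'' into ``separable support of an associated measure on $(B_{Y^*},w^*)$''. Throughout I would use that $L_1(\mu)$ is WCG, hence WLD; that $(B_{Y^*},w^*)$ is Corson (as $Y$ is WLD); and the fact recalled in Subsection~\ref{subsection:Measures} that every separable subset of a Corson compact space is metrizable. The starting point is the representation $\mathcal{L}(L_1(\mu),Y^*)\cong L_\infty^{w^*}(\mu,Y^*)$: every $T\in\mathcal{L}(L_1(\mu),Y^*)$ is given by $T(f)=\text{w}^*\text{-}\int_\Omega f\,g\,d\mu$ for an (a.e.\ unique) weak$^*$-measurable function $g\colon\Omega\to Y^*$ with $\|g\|_\infty=\|T\|$.

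For the implication (M)$\impli\mathcal{L}=\mathcal{S}$, I would take $T$ represented by such a $g$ and, after scaling, assume $g$ takes values in $B_{Y^*}$. Since $\omega\mapsto\langle g(\omega),y\rangle$ is measurable for each $y\in Y$, the map $g$ is measurable into the Baire $\sigma$-algebra of $(B_{Y^*},w^*)$, so pushing $\mu$ forward yields a Baire measure which extends uniquely to some $\nu\in P(B_{Y^*},w^*)$ after normalization. Property~(M) then gives that $S:=\operatorname{supp}(\nu)$ is $w^*$-separable, and a routine argument shows $g(\omega)\in S$ for $\mu$-a.e.\ $\omega$. Hence $T(L_1(\mu))\subseteq\overline{\operatorname{span}}^{\,w^*}(S)$ is $w^*$-separable, and Lemma~\ref{cor:WLD-WS} upgrades this to norm separability, i.e.\ $T\in\mathcal{S}(L_1(\mu),Y^*)$.

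For the converse I would argue contrapositively: if (M) fails, pick $\nu\in P(B_{Y^*},w^*)$ with nonseparable support $S$, and let $T\colon L_1(\nu)\to Y^*$ be the operator represented by the (weak$^*$-continuous, hence weak$^*$-measurable) inclusion $g=\mathrm{id}\colon B_{Y^*}\to Y^*$, so that $T(\mathbf{1}_A)=\text{w}^*\text{-}\int_A y^*\,d\nu(y^*)$. The key point is that $S\subseteq\overline{T(L_1(\nu))\cap B_{Y^*}}^{\,w^*}$: for $y_0^*\in S$ and shrinking $w^*$-neighborhoods $U$ of $y_0^*$ one has $\nu(U)>0$, and the barycenters $\tfrac{1}{\nu(U)}\int_U y^*\,d\nu=T(\mathbf{1}_U/\nu(U))$ lie in $\overline{\operatorname{co}}^{\,w^*}(U)\subseteq B_{Y^*}$ and $w^*$-converge to $y_0^*$. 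If $T$ had separable range, then $T(L_1(\nu))\cap B_{Y^*}$ would be norm-separable, hence $w^*$-separable, so $S$ would be contained in a $w^*$-separable $w^*$-compact subset of $B_{Y^*}$; being Corson, that subset is metrizable, forcing $S$ to be separable, a contradiction. Thus $T\notin\mathcal{S}(L_1(\nu),Y^*)$, as required.

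The hard part will be the measure-theoretic bookkeeping in the first implication: passing carefully from the weak$^*$-measurable representing function $g$ to a genuine \emph{regular} Borel probability measure on $(B_{Y^*},w^*)$ (the Baire-to-Borel extension and the normalization), and verifying that $g$ is $\mu$-essentially valued in the support of that measure, so that property~(M) can actually be applied. Once this is in place, the Banach-space part, namely invoking Lemma~\ref{cor:WLD-WS} together with the ``separable $\Rightarrow$ metrizable'' behaviour of Corson compacta, is routine.
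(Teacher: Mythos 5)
The paper does not actually prove Theorem~\ref{theo:JR}: it is quoted from \cite[Corollary~2.4]{rod7}, so there is no internal proof to compare against and your argument must stand on its own. Your second implication (contrapositive: no property~(M) implies a non-separable-range operator) is correct and complete: the barycentre operator $T(f)={\rm w}^*\text{-}\int f(y^*)\,y^*\,d\nu(y^*)$ on $L_1(\nu)$ is well defined, the normalized barycentres over shrinking neighbourhoods of a support point do $w^*$-converge to that point, and the passage from norm separability of the range to metrizability of $\overline{T(L_1(\nu))\cap B_{Y^*}}^{w^*}$ via the Corson property is exactly right.

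The first implication, however, hinges on a step that is false as stated, namely that the representing function $g$ is $\mu$-essentially valued in $S={\rm supp}(\nu)$. The function $g$ is determined by $T$ only up to weak$^*$-scalar equivalence, and the pushforward Baire measure (hence $\nu$ and $S$) is an invariant of that equivalence class, not of the pointwise values of any particular representative. Concretely, take $Y=c_0([0,1])$ (which is WLD), $\mu$ Lebesgue measure on $[0,1]$ and $g(\omega)=e_\omega\in B_{\ell_1([0,1])}$: this $g$ is weak$^*$-scalarly measurable, it represents the zero operator, its pushforward Baire measure extends to $\nu=\delta_0$, so $S=\{0\}$, and yet $g(\omega)\notin S$ for \emph{every} $\omega$. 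So no amount of measure-theoretic bookkeeping will deliver that step. Fortunately the conclusion you draw from it, $T(L_1(\mu))\subseteq\overline{{\rm span}}^{w^*}(S)$, is true and has a direct proof: if $y\in Y$ vanishes on $S$, then the cozero set $\{y^*\in B_{Y^*}:|\langle y^*,y\rangle|>0\}$ is open, disjoint from $S$, hence $\nu$-null by inner regularity, and being a Baire set its $g$-preimage is $\mu$-null; thus $\langle g(\cdot),y\rangle=0$ $\mu$-a.e., so $y$ annihilates $T(L_1(\mu))$, which places $T(L_1(\mu))$ inside the $w^*$-closed span of~$S$. With that repair (plus the harmless observation that the proof of Lemma~\ref{cor:WLD-WS} only needs the range to be \emph{contained in} the $w^*$-closure of a countable set), your first implication also goes through.
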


\begin{cor}\label{cor:Lebesgue-Bochner}
Let $Y$ be a Banach space. The following statements are equivalent:
\begin{enumerate}
\item[(i)] $Y$ is WLD and $(B_{Y^*},w^*)$ has property (M).
\item[(ii)] $L_1(\mu,Y)$ is WLD for every finite measure~$\mu$.
\end{enumerate}
\end{cor}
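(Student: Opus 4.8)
The plan is to reduce everything to the isometric identification $L_1(\mu,Y)=L_1(\mu)\pten Y$ and then exploit two tools from above: the characterization of WLD projective tensor products in Theorem~\ref{theorem:caractWLD} (applied with $X:=L_1(\mu)$, whose dual is $X^*=L_\infty(\mu)$) and the description of operators out of $L_1(\mu)$ in Theorem~\ref{theo:JR}. Throughout I would use that, for every finite measure~$\mu$, the space $L_1(\mu)$ is Hilbert generated, hence WCG, hence WLD.

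For (ii)$\impli$(i) I would first extract that $Y$ is WLD: taking $\mu$ to be a probability measure concentrated at a single point gives $L_1(\mu)\cong\erre$ and thus $L_1(\mu,Y)\cong Y$, so $Y$ is WLD by hypothesis. Now, fixing an arbitrary finite measure~$\mu$ and using that $L_1(\mu)\pten Y$ is WLD, Theorem~\ref{theorem:caractWLD} yields in particular $\mathcal{L}(L_1(\mu),Y^*)=\mathcal{S}(L_1(\mu),Y^*)$. Since this holds for every finite~$\mu$ and $Y$ is WLD, Theorem~\ref{theo:JR} gives that $(B_{Y^*},w^*)$ has property~(M). This direction is routine.

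For (i)$\impli$(ii) I would fix a finite measure~$\mu$ and verify the four hypotheses of Theorem~\ref{theorem:caractWLD} for the pair $L_1(\mu)$ and~$Y$. Three of them are easy or already available: $L_1(\mu)$ is WLD because it is WCG; $Y$ is WLD by assumption; and $\mathcal{L}(L_1(\mu),Y^*)=\mathcal{S}(L_1(\mu),Y^*)$ is immediate from Theorem~\ref{theo:JR}, since $Y$ is WLD and $(B_{Y^*},w^*)$ has property~(M). The remaining condition, $\mathcal{L}(Y,L_\infty(\mu))=\mathcal{S}(Y,L_\infty(\mu))$ --- that is, \emph{every operator from~$Y$ into $L_\infty(\mu)$ has separable range} --- is the main obstacle, and the only place where property~(M) is genuinely needed.

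To settle this last condition I would argue directly, via a representing map and a pushforward measure (notably, this step does not invoke Theorem~\ref{theo:JR}). Realize the measure algebra of~$\mu$ as $L_\infty(\mu)=C(K)$ for its hyperstonean Stone space~$K$, so that $\mu$ corresponds to a Radon measure $\tilde\mu\in P(K)$; normalizing, assume $\mu$ is a probability. Given $T\in\mathcal{L}(Y,C(K))$ with $\|T\|\le 1$, the map $g:K\to (B_{Y^*},w^*)$ given by $g(k)(y):=(Ty)(k)$ is continuous and satisfies $(Ty)(k)=\langle g(k),y\rangle$ for all $y\in Y$, $k\in K$. Consider the Radon probability measure $\lambda:=g_*\tilde\mu\in P(B_{Y^*},w^*)$. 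By property~(M) its support $L:={\rm supp}(\lambda)$ is $w^*$-separable, hence $w^*$-metrizable because $(B_{Y^*},w^*)$ is Corson and separable subsets of Corson compacta are metrizable. Since $\lambda$ is Radon we have $\lambda(L)=1$, so $g(k)\in L$ for $\tilde\mu$-almost every~$k$. Consequently $T$ factors as the composition of the operator $\rho:Y\to C(L)$, $\rho(y):=\langle\cdot,y\rangle|_L$, followed by the norm-one operator $R:C(L)\to L_\infty(\mu)$, $R(h):=[\,k\mapsto h(g(k))\,]$ (well defined $\tilde\mu$-a.e.), because $R(\rho(y))$ and $Ty$ agree $\tilde\mu$-almost everywhere. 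As $L$ is metrizable, $C(L)$ is separable, whence $\rho(Y)$ is separable and so $T(Y)=R(\rho(Y))$ is separable, as required. With all four conditions of Theorem~\ref{theorem:caractWLD} in place, $L_1(\mu)\pten Y$ is WLD, completing the proof.
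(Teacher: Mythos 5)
Your proof is correct, and its overall architecture coincides with the paper's: (ii)$\impli$(i) is exactly the combination of Theorems~\ref{theorem:caractWLD} and~\ref{theo:JR} (your point-mass reduction for the WLD-ness of $Y$ is harmless but redundant, since Theorem~\ref{theorem:caractWLD} already yields it), and (i)$\impli$(ii) reduces to checking the conditions of Theorem~\ref{theorem:caractWLD} for $L_1(\mu)\pten Y$, with $\mathcal{L}(Y,L_\infty(\mu))=\mathcal{S}(Y,L_\infty(\mu))$ as the only nontrivial point. It is precisely there that you take a genuinely different route. The paper sets $Z:=\overline{T(Y)}^{\|\cdot\|}$, notes that $(B_{Z^*},w^*)$ is Corson with property~(M) because the injective adjoint $T^*$ embeds it into a multiple of $(B_{Y^*},w^*)$, and that it carries a strictly positive measure as a continuous image of $(B_{L_\infty(\mu)^*},w^*)$, which admits one by a cited result of Plebanek; a Corson compact with property~(M) carrying a strictly positive measure is metrizable, so $Z$ is separable. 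You instead pass to the hyperstonean representation $L_\infty(\mu)=C(K)$, encode $T$ as the continuous map $g(k)=T^*(\delta_k)$ into $(B_{Y^*},w^*)$, push $\tilde\mu$ forward, and apply property~(M) directly on $(B_{Y^*},w^*)$ to obtain a metrizable carrier $L$ through which $T$ factors via the separable space $C(L)$. The two arguments are close cousins --- each transports one well-chosen measure along (essentially) $T^*$ and extracts separability from its support --- but yours avoids the citation of the strictly-positive-measure fact and the detour through $B_{Z^*}$, at the price of invoking the hyperstonean machinery and verifying the a.e.-defined factorization $T=R\circ\rho$; all of these verifications are carried out correctly, so both proofs stand.
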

\begin{proof} (ii)$\impli$(i) This follows from Theorems~\ref{theorem:caractWLD} and~\ref{theo:JR}.

(i)$\impli$(ii) We will apply Theorem~\ref{theorem:caractWLD}. On the one hand, any operator from $L_1(\mu)$ to~$Y^*$ has separable range by 
Theorem~\ref{theo:JR}. On the other hand, we claim that any $T\in \mathcal{L}(Y,L_\infty(\mu))$ has separable range. Indeed, if
we consider such a~$T$ as an operator from~$Y$ to $Z:=\overline{T(Y)}^{\|\cdot\|}$, then $T$ has dense range and so 
$T^*:Z^* \to Y^*$ is injective, hence $(B_{Z^*},w^*)$ is a Corson compact space with property~(M).

The compact space $L:=(B_{L_\infty(\mu)^*},w^*)$ admits a strictly positive measure, i.e., 
there is a regular Borel probability measure on~$L$ whose support is~$L$ (cf. \cite[Theorem~4.1]{ple2}).
Therefore, its continuous image $(B_{Z^*},w^*)$ admits a strictly positive measure as well.
Since $(B_{Z^*},w^*)$ is Corson and has property~(M), we conclude that it is metrizable, 
that is, $Z$ is separable. It follows that $T$ has separable range.
\end{proof}

\section{WCG spaces, their subspaces and projective tensor products}\label{section:WCGpten}

We begin this section with a result analogous to Propositions~\ref{pro:separable-pten-C} and~\ref{pro:separable-pten-WLD}
for WCG spaces:

\begin{pro}\label{pro:separable-pten-WCG}
Let $X$ and $Y$ be Banach spaces. If $X$ is separable and $Y$ is WCG, then $X\pten Y$ is WCG.
\end{pro}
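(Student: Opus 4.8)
The plan is to exhibit a single weakly compact subset of $X\pten Y$ whose closed linear span is the whole space, thereby bypassing the general difficulties with weak compactness in projective tensor products (in particular, avoiding the Dunford--Pettis hypothesis of Theorem~\ref{theo:Diestel}, which need not hold here). Since $X$ is separable, I would fix a dense sequence $(x_n)_{n\in\N}$ in~$B_X$; since $Y$ is WCG, I would fix a weakly compact set $G\sub B_Y$ with $Y=\overline{{\rm span}}(G)$.

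First I would observe that for each $n$ the map $T_n\colon Y\to X\pten Y$, $T_n(y):=x_n\otimes y$, is a bounded operator with $\|T_n\|\leq\|x_n\|\leq 1$ (recall that $\|x\otimes y\|=\|x\|\,\|y\|$ in the projective norm). Hence each $A_n:=T_n(G)=\{x_n\otimes g:g\in G\}$ is weakly compact, being the image of a weakly compact set under a bounded operator, and $\sup_{a\in A_n}\|a\|\leq 1$. Rescaling, I set $B_n:=2^{-n}A_n$, so that $\sup_{b\in B_n}\|b\|\leq 2^{-n}$.

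The key step is to show that $S:=\{0\}\cup\bigcup_{n\in\N}B_n$ is relatively weakly compact, which I would do through the Eberlein--\v{S}mulian theorem. Given a sequence in~$S$, either some $B_n$ contains infinitely many of its terms---in which case weak compactness of~$B_n$ yields a weakly convergent subsequence---or one can pass to a subsequence lying in $B_{n_j}$ with $n_j\to\infty$, which then converges to~$0$ in norm because $\|\cdot\|\leq 2^{-n_j}$ there. Thus every sequence in~$S$ has a weakly convergent subsequence, so $\overline{S}^{\,w}$ is weakly compact. The scaling by~$2^{-n}$ is precisely what forces the tails of the union to accumulate at~$0$; this is the main obstacle, since an uncontrolled countable union of weakly compact sets need not be relatively weakly compact, and it is exactly here that the separability of~$X$ is genuinely used.

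Finally, by the Krein--\v{S}mulian theorem the set $K:=\overline{{\rm co}}(\overline{S}^{\,w})=\overline{{\rm co}}(S)$ is weakly compact. It remains to check that $\overline{{\rm span}}(K)=X\pten Y$. Since $\{x\otimes y:x\in X,\ y\in Y\}$ has dense linear span in $X\pten Y$ and
\[
	\|x\otimes y-x_n\otimes y'\|\leq\|x-x_n\|\,\|y\|+\|x_n\|\,\|y-y'\|,
\]
the density of $(x_n)$ in~$B_X$ (whence $\overline{{\rm span}}(\{x_n:n\in\N\})=X$) together with $\overline{{\rm span}}(G)=Y$ shows that $\overline{{\rm span}}(\{x_n\otimes g:n\in\N,\ g\in G\})=X\pten Y$. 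As this span is contained in $\overline{{\rm span}}(S)\sub\overline{{\rm span}}(K)$, I conclude that $X\pten Y=\overline{{\rm span}}(K)$ with $K$ weakly compact, that is, $X\pten Y$ is WCG.
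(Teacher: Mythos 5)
Your proof is correct and follows essentially the same route as the paper: both take a dense sequence $(x_n)$ in $X$, a weakly compact generating set in $Y$, and use that the countably many weakly compact sets $\{x_n\}\otimes G$ generate $X\pten Y$. The only difference is that you spell out, via the $2^{-n}$ rescaling and the Eberlein--\v{S}mulian and Krein--\v{S}mulian theorems, the standard fact that a space generated by countably many weakly compact sets is WCG, which the paper leaves implicit.
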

\begin{proof}
Let $(x_n)_{n\in \N}$ be a dense sequence in~$X$ and let $K \sub Y$ be a weakly compact set such that
$Y=\overline{{\rm span}}(K)$. Then each $K_n:=\{x_n\}\otimes K$ is weakly compact in~$X\pten Y$ and
we have $X\pten Y=\overline{\bigcup_{n\in \N}K_n}^{\|\cdot\|}$, hence $X\pten Y$ is WCG.
\end{proof}

We next consider a slight extension of Theorem~\ref{theo:Diestel}, see Proposition~\ref{TheoAlmostDiestel} below. 
To this end we need to introduce more terminology.
Recall that a Banach space $X$ is said to have the \emph{Dunford-Pettis} property if $x_n^*(x_n)\to 0$
for all weakly null sequences $(x_n^*)_{n\in \N}$ in $X^*$ and $(x_n)_{n\in \N}$ in~$X$. This is equivalent to the fact that
every weakly compact operator from~$X$ to another Banach space is Dunford-Pettis (see, e.g., \cite[Proposition~13.42]{fab-ultimo}). 
As usual, an operator $T$ from~$X$ to a Banach space~$Z$ is said to be {\em Dunford-Pettis} (or {\em completely continuous}, shortly $T\in \mathcal{DP}(X,Z)$)
if $T(W)$ is norm compact whenever $W\sub X$ is weakly compact or, equivalently, if $(T(x_n))_{n\in \N}$
is norm convergent for every weakly Cauchy sequence $(x_n)_{n\in\N}$ in~$X$. 

\begin{defi}\label{defi:Ylimited}
Let $Y$ be a Banach space. A set $H \sub Y^*$ is said to be {\em $Y$-limited} if every 
weakly null sequence in~$Y$ converges uniformly on~$H$. 
\end{defi}

Given a Banach space~$Y$, it is immediate that a set $H \sub Y^*$ is $Y$-limited if and only if every countable subset of $H$ is $Y$-limited.
Furthermore, every relatively norm compact subset of~$Y^*$ is $Y$-limited. 
In fact, the property of being $Y$-limited is equivalent to being relatively compact with respect to the Mackey topology $\tau(Y^*,Y)$ 
(see, e.g., \cite[Theorem~3.11]{fab-alt-JJ}). If $Y$ contains no subspace isomorphic to~$\ell_1$, then 
every $Y$-limited subset of~$Y^*$ is relatively norm compact (see, e.g., \cite[Theorem~3.16]{fab-alt-JJ}).

\begin{defi}\label{defi:WDP}
Let $X$ and $Y$ be Banach spaces. We denote by $\WDP(X,Y^*)$ the set of all $T\in \mathcal{L}(X,Y^*)$ 
such that $T(W)$ is $Y$-limited for every weakly compact set $W \sub X$.
\end{defi}

\begin{rem}\label{rem:WDP}
Let $X$ and $Y$ be Banach spaces. The following statements hold:
\begin{enumerate}
\item[(i)] Let $T\in \mathcal{L}(X,Y^*)$. Then $T\in \WDP(X,Y^*)$ if and only if $\{T(x_n):n\in \N\}$ is $Y$-limited
for every weakly null sequence $(x_n)_{n\in \N}$ in~$X$.
\item[(ii)] $\mathcal{DP}(X,Y^*) \sub \WDP(X,Y^*)$. 
\item[(iii)] If $X$ or $Y$ has the Dunford-Pettis property, then  $\mathcal{L}(X,Y^*) = \WDP(X,Y^*)$.
\end{enumerate}
\end{rem}
\begin{proof} (i) is a consequence of the Eberlein-Smulyan theorem, whereas (ii) follows from the $Y$-limitedness
of relatively norm compact subsets of~$Y^*$.
For the proof of~(iii), fix $T\in \mathcal{L}(X,Y^*)$ and take any weakly null sequence $(x_n)_{n\in \N}$ in~$X$. If 
$\{T(x_n):n\in \N\}$ is not $Y$-limited, then there exist 
a weakly null sequence $(y_k)_{k\in \N}$, a map $\varphi:\N\to \N$ 
and $\epsilon>0$ in such a way that 
\begin{equation}\label{eqn:DP}
	\big|\langle T(x_{\varphi(k)}),y_k\rangle\big|=\big|\langle T^{*}(y_k),x_{\varphi(k)}\rangle\big| \geq \epsilon 
	\quad\text{for all $k\in \N$}.
\end{equation}
Since $(y_k)_{k\in \N}$ is weakly null, the set $\varphi(\N)$ is infinite and so, by passing to a subsequence, we can assume
that $\varphi$ is strictly increasing, hence $(x_{\varphi(k)})_{k\in\N}$ is weakly null.
Observe that the sequences $(T(x_{\varphi(k)}))_{k\in \N}$ and $(T^{*}(y_k))_{k\in\N}$ are weakly null in~$Y^*$ and~$X^*$, respectively.
Now, it is clear that \eqref{eqn:DP} contradicts that either $X$ or~$Y$ has the Dunford-Pettis property.
\end{proof}

\begin{pro}\label{TheoAlmostDiestel}
Let $X$ and $Y$ be Banach spaces with $\mathcal{L}(X,Y^*)=\WDP(X,Y^*)$. Then:
\begin{enumerate}
\item[(i)] $W_X\otimes W_Y$ is relatively weakly compact in $X\pten Y$ whenever $W_X \sub X$ and $W_Y \sub Y$ are 
relatively weakly compact.
\item[(ii)] $X\pten Y$ is WCG whenever $X$ and $Y$ are WCG.
\end{enumerate} 
\end{pro}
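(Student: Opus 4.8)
The plan is to establish (i) first and then obtain (ii) as an immediate consequence. For (i), note that $W_X \otimes W_Y$ is bounded in $X\pten Y$ (since $\|x\otimes y\|=\|x\|\,\|y\|$) and that a subset of a relatively weakly compact set is relatively weakly compact; replacing $W_X,W_Y$ by their weak closures, I may assume they are weakly compact. By the Eberlein-Smulyan theorem it then suffices to show that an arbitrary sequence $(x_n\otimes y_n)_{n\in\N}$ with $x_n\in W_X$ and $y_n\in W_Y$ has a weakly convergent subsequence in $X\pten Y$. Using the weak compactness of $W_X$ and $W_Y$, I first pass to a subsequence for which $x_n\to x$ weakly in~$X$ and $y_n\to y$ weakly in~$Y$, and the goal reduces to proving that $x_n\otimes y_n\to x\otimes y$ weakly in $X\pten Y$.

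Since $(X\pten Y)^*=\mathcal{B}(X,Y)$, this weak convergence amounts to checking that $S(x_n,y_n)\to S(x,y)$ for every $S\in\mathcal{B}(X,Y)$. Writing $S(x_n,y_n)=\langle S_X(x_n),y_n\rangle$, I would split
$$
	S(x_n,y_n)-S(x,y)=\langle S_X(x_n),y_n-y\rangle+\langle S_X(x_n-x),y\rangle .
$$
The second summand tends to~$0$ because $S_X\in\mathcal{L}(X,Y^*)$ is weak-to-weak continuous, so $S_X(x_n-x)\to 0$ weakly in~$Y^*$, and evaluating at the fixed $y\in Y$ (a weakly continuous functional on~$Y^*$) annihilates it. The first summand is the crux of the argument and the only place where the hypothesis $\mathcal{L}(X,Y^*)=\WDP(X,Y^*)$ enters: since $W_X$ is weakly compact and $S_X\in\WDP(X,Y^*)$, the set $S_X(W_X)$ is $Y$-limited; as $(y_n-y)_n$ is weakly null in~$Y$, it converges to~$0$ uniformly on~$S_X(W_X)$, whence $|\langle S_X(x_n),y_n-y\rangle|\leq \sup_{z\in S_X(W_X)}|\langle z,y_n-y\rangle|\to 0$. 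Combining the two estimates yields $S(x_n,y_n)\to S(x,y)$, which proves~(i).

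I expect the term $\langle S_X(x_n),y_n-y\rangle$ to be the main obstacle: bilinear forms are \emph{not} sequentially continuous along product sequences $(x_n,y_n)$ of weakly convergent sequences in general, and it is precisely the $Y$-limitedness supplied by $\WDP$ that forces this mixed term to vanish. Everything else is a routine application of Eberlein-Smulyan together with the weak-to-weak continuity of bounded operators.

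For (ii), assume $X$ and $Y$ are WCG and choose weakly compact sets $G_X\sub X$ and $G_Y\sub Y$ with $X=\overline{{\rm span}}(G_X)$ and $Y=\overline{{\rm span}}(G_Y)$. By part~(i) the set $L:=\overline{G_X\otimes G_Y}^{\,w}$ is weakly compact in $X\pten Y$. Finally, bilinearity gives that ${\rm span}(G_X\otimes G_Y)\supseteq {\rm span}(G_X)\otimes {\rm span}(G_Y)$, which is norm-dense in $X\otimes Y$ (because $\otimes\colon X\times Y\to X\pten Y$ is jointly continuous) and hence in $X\pten Y$; therefore $\overline{{\rm span}}(L)=X\pten Y$, so $X\pten Y$ is WCG.
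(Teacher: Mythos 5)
Your proof is correct and follows essentially the same route as the paper's: reduce via Eberlein--Smulyan to sequences of elementary tensors and use that $S_X(W_X)$ is $Y$-limited to make the mixed term $\langle S_X(x_n),y_n-y\rangle$ vanish. The paper only states the reduction for weakly null sequences and leaves the decomposition implicit, whereas you carry it out explicitly, but the key idea is identical.
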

\begin{proof} (ii) is immediate from~(i). To prove~(i), by the Eberlein-Smulyan theorem,
it is enough to show that if $(x_n)_{n\in \N}$ and $(y_n)_{n\in\N}$ are weakly null sequences in $X$ and~$Y$, respectively,
then $(x_n \otimes y_n)_{n\in \N}$ is weakly null in $X\pten Y$. Take any $S \in (X \pten Y)^*$. Then $S_X\in \WDP(X,Y^*)$ and so 
$\{S_X(x_n):n\in \N\}$ is $Y$-limited. Therefore, $\langle S,x_n \otimes y_n\rangle=S_X(x_n)(y_n) \to 0$, as desired.
\end{proof}

\begin{rem}
\rm The equality $\mathcal{L}(X,Y^*)=\WDP(X,Y^*)$ is not necessary for $X\pten Y$ to be WCG. Indeed, the space $\ell_2 \pten \ell_2$ is separable (hence WCG), 
while $B_{\ell_2}$ is not $\ell_2$-limited and so the identity operator on~$\ell_2$ does not belong to $\WDP(\ell_2,\ell_2)$.
\end{rem}

Throughout the rest of this section we analyze the property of being 
{\em subspace} of a WCG space for projective tensor products. 

As we mentioned in Subsection~\ref{subsection:WCG}, a Banach space $X$
is subspace of a WCG space if and only if $(B_{X^*},w^*)$ is Eberlein compact.
Since the product of countably many Eberlein compact spaces is Eberlein (see, e.g., \cite[Theorem 1.2]{wag}), 
the same idea of Proposition~\ref{pro:separable-pten-C} gives the following:

\begin{pro}\label{pro:separable-pten-sWCG}
Let $X$ and $Y$ be Banach spaces. If $X$ is separable and $Y$ is subspace of a WCG space, then $X\pten Y$ is 
subspace of a WCG space.
\end{pro}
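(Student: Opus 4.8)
The plan is to mimic the proof of Proposition~\ref{pro:separable-pten-C}, simply replacing the class of compact convex sets with convex countable tightness by the class of Eberlein compacta. Recall from Subsection~\ref{subsection:WCG} that it suffices to prove that $(B_{(X\pten Y)^*},w^*)$ is Eberlein, and that the hypothesis on~$Y$ means precisely that $(B_{Y^*},w^*)$ is Eberlein.

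First I would fix a dense sequence $(x_n)_{n\in\N}$ in~$B_X$, which exists since $X$ is separable, and consider the same map as in Proposition~\ref{pro:separable-pten-C}, namely
$$
	\xi: (B_{(X\pten Y)^*},w^*) \to (B_{Y^*},w^*)^{\N},
	\quad
	\xi(S):=(S_X(x_n))_{n\in \N}.
$$
This is well defined because the isometry $S\mapsto S_X$ sends $B_{(X\pten Y)^*}$ into $B_{\mathcal{L}(X,Y^*)}$ and $\|x_n\|\leq 1$, so each coordinate $S_X(x_n)$ lies in~$B_{Y^*}$. Since the $w^*$-topology of~$(X\pten Y)^*$ coincides with the W$^*$OT on bounded subsets of~$\mathcal{L}(X,Y^*)$, the map $\xi$ is continuous; it is injective because $(x_n)_{n\in\N}$ is dense in~$B_X$, so that $\xi(S)=\xi(T)$ forces $S_X=T_X$ and hence $S=T$. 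Being a continuous injection from a compact space into a Hausdorff space, $\xi$ is a homeomorphism onto its image, which is compact and hence closed in $(B_{Y^*},w^*)^{\N}$.

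Next I would invoke the two structural facts about Eberlein compacta. Since $(B_{Y^*},w^*)$ is Eberlein and countable products of Eberlein compacta are Eberlein (see \cite[Theorem~1.2]{wag}), the space $(B_{Y^*},w^*)^{\N}$ is Eberlein. As Eberlein compactness is inherited by closed subspaces, the image $\xi(B_{(X\pten Y)^*})$ is Eberlein, and therefore so is its homeomorphic copy $(B_{(X\pten Y)^*},w^*)$. By the characterization recalled in Subsection~\ref{subsection:WCG}, this shows that $X\pten Y$ is a subspace of a WCG space.

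I do not expect any serious obstacle here, as the argument runs entirely parallel to that of Proposition~\ref{pro:separable-pten-C}. The only points deserving care are the verification that $\xi$ is a topological embedding (which is handled by the compactness of its domain) and the appeal to the heredity of Eberlein compactness under closed subspaces, both of which are standard. Note that the separability of~$X$ enters exactly once, but essentially: it is what keeps the product $(B_{Y^*},w^*)^{\N}$ countable, and hence Eberlein.
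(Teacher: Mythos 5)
Your proof is correct and follows exactly the route the paper intends: it reuses the embedding $\xi$ from Proposition~\ref{pro:separable-pten-C} and replaces ``convex countable tightness'' by ``Eberlein'', invoking the stability of Eberlein compactness under countable products and closed subspaces. No issues.
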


A key tool is the following characterization of subspaces of WCG spaces
due to Fabian, Montesinos and Zizler~\cite{fab-mon-ziz-2} (see, e.g., \cite[Theorem~6.13]{fab-alt-JJ}).
This result was the starting point of a fruitful branch of Banach space theory devoted to quantifications of already known results
(the concept of $\epsilon$-relatively weakly compact set is recalled in Definition~\ref{defi:wk} below).

\begin{theo}\label{theo:sWCG}
Let $Z$ be a Banach space. The following statements are equivalent:
\begin{enumerate}
\item[(i)] $Z$ is subspace of a WCG space.
\item[(ii)] For each $\epsilon>0$ there is a countable decomposition $B_{Z}=\bigcup_{n\in \N}M^\epsilon_n$ 
such that each $M_n^\epsilon$ is $\epsilon$-relatively weakly compact in~$Z$.
\end{enumerate}
\end{theo}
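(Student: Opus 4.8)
The plan is to read condition~(ii) as a quantitative reformulation of the known fact, recalled in Subsection~\ref{subsection:WCG}, that $Z$ is a subspace of a WCG space if and only if $(B_{Z^*},w^*)$ is Eberlein compact. By the Amir--Lindenstrauss theorem this last property is equivalent to the existence of a bounded, injective, $w^*$-to-weakly continuous operator $u\colon Z^*\to c_0(\Gamma)$ for some set~$\Gamma$; such a $u$ is weakly compact, since $u(B_{Z^*})$ is weakly compact in $c_0(\Gamma)$, so the Davis--Figiel--Johnson--Pe{\l}czy\'{n}ski factorization lets me write $u=b\circ a$ with $a\colon Z^*\to R$, $b\colon R\to c_0(\Gamma)$ and $R$ reflexive. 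The adjoint $a^*\colon R^*\to Z^{**}$ then carries the weakly compact ball $nB_{R^*}$ onto a $w^*$-compact subset of $Z^{**}$, and these are the natural building blocks for the decomposition in~(ii). Two elementary facts will be used repeatedly: if $Z\sub W$ then $Z^{**}$ sits $w^*$-homeomorphically inside $W^{**}$, and the sum of a $w^*$-compact convex set with $\epsilon B_{Z^{**}}$ is again $w^*$-compact.

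For (i)$\Rightarrow$(ii), fix $\epsilon>0$. The first step is to manufacture, out of the reflexive factorization above, a reflexive space $R_\epsilon$ and a bounded operator $v_\epsilon\colon R_\epsilon\to Z^{**}$ with two properties: (a) the range \emph{hugs} $Z$, that is $v_\epsilon(R_\epsilon)\sub Z+\epsilon B_{Z^{**}}$; and (b) $v_\epsilon$ covers the ball up to~$\epsilon$, that is $B_Z\sub\bigcup_{n\in\N}\big(v_\epsilon(nB_{R_\epsilon})+\epsilon B_{Z^{**}}\big)$. Granting this, set $M_n^\epsilon:=B_Z\cap\big(v_\epsilon(nB_{R_\epsilon})+\epsilon B_{Z^{**}}\big)$; by~(b) these give a countable cover of $B_Z$, and each is $2\epsilon$-relatively weakly compact (Definition~\ref{defi:wk}): as $R_\epsilon$ is reflexive, $v_\epsilon(nB_{R_\epsilon})$ is $w^*$-compact, so $\overline{M_n^\epsilon}^{\,w^*}\sub v_\epsilon(nB_{R_\epsilon})+\epsilon B_{Z^{**}}\sub Z+2\epsilon B_{Z^{**}}$ by~(a). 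Since $\epsilon$ is arbitrary, running this with $\epsilon/2$ yields~(ii).

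For (ii)$\Rightarrow$(i), I would run the same scheme backwards. Writing $L_{k,n}:=\overline{M_n^{1/k}}^{\,w^*}\sub Z+\frac{1}{k}B_{Z^{**}}$ for the $w^*$-compact closures of a $\frac{1}{k}$-cover of $B_Z$, the goal is to reassemble the doubly-indexed family $\{L_{k,n}\}$ into an Amir--Lindenstrauss witness. Concretely, for each $k$ the level-$k$ data should produce a reflexive operator whose image is close to~$Z$ within $\frac{1}{k}$ and spans $Z$ densely up to $\frac{1}{k}$; combining these across $k$ by a Davis--Figiel--Johnson--Pe{\l}czy\'{n}ski-type $\ell_2$-interpolation should yield a bounded injective $w^*$-to-weakly continuous operator $Z^*\to c_0(\Gamma)$, i.e.\ that $(B_{Z^*},w^*)$ is Eberlein.

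The crux in both directions is the same, and it is precisely where the word \emph{subspace} bites: one must control distances to~$Z$ \emph{inside the bidual}, not distances to an ambient WCG superspace. A single reflexive generator cannot suffice, because an injective $w^*$-to-weakly continuous operator from $Z^*$ into a \emph{reflexive} space is the adjoint of a dense-range operator from a reflexive space into $Z$, which would force $Z$ itself to be WCG; equivalently, the image $a^*(R^*)$ of the reflexive factor generically escapes $Z$, so its $w^*$-closures lie within $\epsilon$ of the superspace but not of~$Z$. The main obstacle is therefore exactly the hugging construction in step~(a): producing, for each fixed~$\epsilon$, a reflexive operator whose bidual range stays within $\epsilon$ of~$Z$ while still covering $B_Z$ up to~$\epsilon$. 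This is engineered by splitting the Amir--Lindenstrauss coordinates into countably many levels and extracting the $\epsilon$-level, and it is the genuinely nontrivial content of the theorem; once it is in place, the $\omega$-closedness bookkeeping, the $w^*$-compactness of the sums, and the bidual inclusions are routine.
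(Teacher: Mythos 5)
This theorem is not proved in the paper: it is quoted from Fabian--Montesinos--Zizler \cite{fab-mon-ziz-2}, so your attempt can only be measured against the standard proof there. As written, your proposal does not establish either implication. For (i)$\Rightarrow$(ii), everything rests on the ``hugging construction'' --- a reflexive $R_\epsilon$ and $v_\epsilon\colon R_\epsilon\to Z^{**}$ with $v_\epsilon(R_\epsilon)\subseteq Z+\epsilon B_{Z^{**}}$ whose multiples of the ball cover $B_Z$ up to $\epsilon$ --- which you never carry out; ``splitting the Amir--Lindenstrauss coordinates into countably many levels and extracting the $\epsilon$-level'' is a slogan, not a construction, and you yourself flag it as the nontrivial content. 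Moreover, this machinery is not needed for this direction. If $Z\subseteq W$ with $W$ WCG, take an absolutely convex weakly compact $K\subseteq W$ with $\overline{\bigcup_{n\in\N}nK}=W$ and set $M_n^\epsilon:=B_Z\cap\bigl(nK+\tfrac{\epsilon}{2}B_W\bigr)$. Identifying $Z^{**}$ with $Z^{\perp\perp}\subseteq W^{**}$, the $w^*$-closure of $M_n^\epsilon$ lands in the $w^*$-compact set $nK+\tfrac{\epsilon}{2}B_{W^{**}}$, hence within $\tfrac{\epsilon}{2}$ of $W$; and for $z^{**}\in Z^{\perp\perp}$ one has $d(z^{**},Z)\le 2\,d(z^{**},W)$, because if $\|z^{**}-w\|<\delta$ then $d(w,Z)=\sup_{f\in B_{Z^\perp}}f(w)=\sup_{f\in B_{Z^\perp}}f(w-z^{**})<\delta$. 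This gives $\epsilon$-relative weak compactness in $Z$ directly, with no reflexive factorization.

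The genuine gap is (ii)$\Rightarrow$(i), which is the substance of the Fabian--Montesinos--Zizler theorem, and for which your text consists entirely of intentions (``should produce'', ``should yield''). There is no argument for how the doubly indexed family $L_{k,n}=\overline{M_n^{1/k}}^{\,w^*}$ is assembled into an injective $w^*$-to-weakly continuous operator $Z^*\to c_0(\Gamma)$, no candidate for the index set $\Gamma$ or for the coordinates of that operator, no verification of injectivity, and no explanation of why a Davis--Figiel--Johnson--Pe{\l}czy\'nski interpolation across levels would interact correctly with the $w^*$-topology of $Z^*$. Indeed, the actual proof does not go through an Amir--Lindenstrauss operator at all: it builds, from the covers $B_Z=\bigcup_n M_n^{1/k}$, a suitable generating family for a WCG overspace (equivalently, a decomposition of the type appearing in Theorem~\ref{theo:Eberlein} witnessing that $(B_{Z^*},w^*)$ is Eberlein), and that combinatorial construction is entirely absent here. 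You have correctly diagnosed where the difficulty lies --- controlling distances to $Z$ inside $Z^{**}$ rather than to an ambient superspace --- but the proposal does not overcome it in either direction.
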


\begin{defi}\label{defi:wk}
Let $Z$ be a Banach space and $M \sub Z$ be a bounded set. 
\begin{enumerate}
\item[(i)] We write
$$
	{\rm wk}_Z(M):=\sup\big\{d(z^{**},Z): \, z^{**}\in \overline{M}^{w^*}\big\},
$$
where $d(z^{**},Z):=\inf\{\|z^{**}-z\|:z\in Z\}$ and $\overline{M}^{w^*}$ is the $w^*$-closure of~$M$ in~$Z^{**}$.
\item[(ii)] $M$ is said to be {\em $\epsilon$-relatively weakly compact}, for some $\epsilon\geq 0$, if 
$$
	\overline{M}^{w^*} \sub Z+\epsilon B_{Z^{**}}.
$$
\end{enumerate}
\end{defi}

\begin{rem}\label{rem:rwc}
\rm Under the assumptions of Definition~\ref{defi:wk}, we have:
$$
	 {\rm wk}_Z(M)< \epsilon 
	 \ \Longrightarrow \
	 \text{$M$ is $\epsilon$-relatively weakly compact} 
	 \ \Longrightarrow \
	 {\rm wk}_Z(M)\leq \epsilon. 
$$
\end{rem}

The following quantitative versions of the Dunford-Pettis property were introduced in~\cite{kac-alt}:

\begin{defi}\label{defi:KKS}
A Banach space $X$ is said to have the: 
\begin{enumerate}
\item[(i)] {\em Direct quantitative Dunford-Pettis property} if there is a constant $C>0$ such that
for every weakly null sequence $(x_n)_{n\in \N}$ in~$B_X$ and for every bounded sequence $(x_n^*)_{n\in \N}$ in~$X^*$ we have
$$
	\limsup_{n\to \infty}|x_n^*(x_n)|\leq C {\rm wk}_{X^*}(\{x_n^*:n\in \N\}).
$$

\item[(ii)] {\em Dual quantitative Dunford-Pettis property} if there is a constant $C>0$ such that
for every bounded sequence $(x_n)_{n\in \N}$ in~$X$ and for every weakly null sequence $(x_n^*)_{n\in \N}$ in~$B_{X^*}$ we have
$$
	\limsup_{n\to \infty}|x_n^*(x_n)|\leq C {\rm wk}_{X}(\{x_n:n\in \N\}).
$$
\end{enumerate}
\end{defi}

All $\mathcal{L}_1$ spaces and all $\mathcal{L}_\infty$ spaces have both the direct and dual
quantitative Dunford-Pettis properties (see \cite[Theorem~5.9]{kac-alt}).

\begin{theo}\label{theo:pten-sets}
Let $X$ and $Y$ be Banach spaces such that $X$ has the dual quantitative Dunford-Pettis property (with constant $C>0$).
If $W_X \sub B_X$ is $\epsilon$-relatively weakly compact for some $\epsilon\geq 0$
and $W_Y \sub B_Y$ is relatively weakly compact, then $W_X \otimes W_Y$ is $\epsilon'$-relatively weakly compact in $X\pten Y$
for $\epsilon'=(4C+2)\epsilon$.
\end{theo}
\begin{proof}
It suffices to check that $W_X \otimes W_Y$ $\epsilon'$-interchanges limits with~$B_{(X\pten Y)^*}$ (see, e.g., \cite[Theorem~3.69]{fab-alt-JJ}). 
Let $(x_n)_{n\in \N}$, $(y_n)_{n\in \N}$ and $(S_m)_{m\in \N}$ be sequences in $W_X$, $W_Y$ and $B_{(X\pten Y)^*}$, respectively, 
for which the iterated limits
$$
	\alpha:=\lim_{m\to\infty} \lim_{n\to\infty} \langle S_m, x_n\otimes y_n \rangle
	\quad\mbox{and}\quad
	\beta:=\lim_{n\to\infty} \lim_{m\to\infty} \langle S_m, x_n\otimes y_n \rangle
$$
exist. We will prove that $|\alpha-\beta|\leq \epsilon'$. Fix a subsequence $(y_{n_k})_{k\in \N}$ that is weakly convergent to some $y\in Y$.

{\em Step~1}. Take any $S\in B_{(X\pten Y)^*}$ for which $\lim_{n\to\infty} \langle S,x_n\otimes y_n\rangle$ exists.
Then the fact that $X$ has the dual quantitative Dunford-Pettis property implies that
\begin{equation}\label{eqn:KKS}
	\limsup_{k\to\infty} \big|\langle S, x_{n_k}\otimes y_{n_k} \rangle - \langle S,x_{n_k}\otimes y \rangle\big|
	=\limsup_{k\to\infty} \big|\langle x_{n_k},S_{X}^*(y_{n_k}-y)\rangle\big| \leq 2C\epsilon.
\end{equation}
Let $x^{**}$ be a $w^*$-cluster point of~$(x_{n_k})_k$ in~$X^{**}$. Then
$\langle x^{**},S_X^*(y)\rangle$ is a cluster point of the sequence $(\langle S,x_{n_k}\otimes y\rangle)_{k\in \N}$ and so
\eqref{eqn:KKS} yields
\begin{equation}\label{eqn:KKS2}
	\left|\lim_{n\to\infty} \langle S,x_{n}\otimes y_{n}\rangle-\langle x^{**},S_X^*(y)\rangle\right|=
	\left|\lim_{k\to\infty} \langle S,x_{n_k}\otimes y_{n_k}\rangle-\langle x^{**},S_X^*(y)\rangle\right|\leq 2C\epsilon.
\end{equation}
Since $W_X$ is $\epsilon$-relatively weakly compact, there is $x\in X$ such that $\|x^{**}-x\|\leq \epsilon$, hence
\begin{multline}\label{eqn:Grothendieck}
	\left|\lim_{n\to\infty} \langle S,x_{n}\otimes y_{n}\rangle-\langle S,x\otimes y\rangle\right| \\ =
	\left|\lim_{n\to\infty} \langle S,x_{n}\otimes y_{n}\rangle-\langle x,S_X^*(y)\rangle \right|
	\stackrel{\eqref{eqn:KKS2}}{\leq} (2C+1)\epsilon.
\end{multline}

{\em Step~2.} Let $\tilde{S}$ be any $w^*$-cluster point of $(S_m)_{m\in \N}$ in $B_{(X\pten Y)^*}$. 
Inequality~\eqref{eqn:Grothendieck} applied to each $S_m$ yields
$$
	\left|\lim_{n\to\infty} \langle S_m,x_{n}\otimes y_{n}\rangle-\langle S_m,x\otimes y\rangle\right|\leq (2C+1)\epsilon
	\quad\mbox{for all }m\in \N
$$
and so
$$
	|\alpha-\langle \tilde{S},x\otimes y\rangle|\leq (2C+1)\epsilon.
$$
Observe that $	\beta=\lim_{n\to\infty} \langle \tilde{S},x_n\otimes y_n\rangle$, 
so another appeal to inequality~\eqref{eqn:Grothendieck} (now applied to~$\tilde{S}$) yields
$$
	|\beta-\langle\tilde{S},x\otimes y\rangle |\leq (2C+1)\epsilon.
$$
It follows that $|\alpha-\beta|\leq (4C+2)\epsilon$, as required.
\end{proof}

\begin{rem}\label{rem:pten-sets}
\rm The same statement holds if the assumption is replaced by ``$Y$ has the direct quantitative Dunford-Pettis property (with constant $C>0)$''.
Indeed, the proof follows the same steps and the only difference is that inequality~\eqref{eqn:KKS} is obtained using~$S_Y$ and
bearing in mind that $S_Y^*(W_X)$ is $\epsilon$-relatively weakly compact in~$Y^*$
(as it can be easily checked).
\end{rem}

\begin{theo}\label{theo:pten-subspaceWCG-qDP}
Let $X$ and $Y$ be Banach spaces such that either $X$ has the dual quantitative Dunford-Pettis property or $Y$ has the direct
quantitative Dunford-Pettis property.
If $X$ is subspace of a WCG space and $Y$ is WCG, then $X \pten Y$ is subspace of a WCG space. 
\end{theo}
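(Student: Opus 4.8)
The plan is to verify condition~(ii) of Theorem~\ref{theo:sWCG} for $Z:=X\pten Y$: given $\epsilon>0$, I will exhibit a countable cover of $B_{X\pten Y}$ by $\epsilon$-relatively weakly compact sets. Since $X$ is a subspace of a WCG space, Theorem~\ref{theo:sWCG} gives, for each $\delta>0$, a decomposition $B_X=\bigcup_{n\in\N}M_n^\delta$ with every $M_n^\delta$ being $\delta$-relatively weakly compact. Since $Y$ is WCG, I can fix a weakly compact, absolutely convex set $W\sub B_Y$ with $\overline{{\rm span}}(W)=Y$ (replace a weakly compact generating set by its closed absolutely convex hull, which stays weakly compact by Krein--\v{S}mulian, and rescale); for $k\in\N$ put $\tilde L_k:=kW\cap B_Y$, each of which is weakly compact and contained in $B_Y$. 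The key input is the tensor estimate: whether $X$ has the dual or $Y$ has the direct quantitative Dunford--Pettis property, Theorem~\ref{theo:pten-sets} (respectively Remark~\ref{rem:pten-sets}) provides a constant $C>0$ such that, the fuzzy factor being on the $X$-side, each $M_n^\delta\otimes \tilde L_k$ is $(4C+2)\delta$-relatively weakly compact in $X\pten Y$; by Remark~\ref{rem:rwc}, ${\rm wk}_{X\pten Y}(M_n^\delta\otimes \tilde L_k)\le (4C+2)\delta$.

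Next I set up the cover. The set $D:={\rm span}(W)=\bigcup_{k}kW$ is a dense linear subspace of~$Y$, hence $\tilde D:=D\cap B_Y=\bigcup_k \tilde L_k$ is norm-dense in~$B_Y$; consequently $B_X\otimes\tilde D$ is $\pi$-dense in $B_X\otimes B_Y$, so that $\overline{{\rm co}}(B_X\otimes \tilde D)=\overline{{\rm co}}(B_X\otimes B_Y)=B_{X\pten Y}$. For each finite set $F\sub \N\times\N$ define
$$
A_F:=\overline{{\rm co}}\Big(\bigcup_{(n,k)\in F}M_n^\delta\otimes \tilde L_k\Big).
$$
There are countably many such $F$, and since every element of ${\rm co}(B_X\otimes\tilde D)$ is a finite convex combination of tensors $a\otimes b$ with $a\in M_n^\delta$ and $b\in\tilde L_k$ for suitable $n,k$, we get ${\rm co}(B_X\otimes\tilde D)\sub\bigcup_F A_F\sub B_{X\pten Y}$. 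Taking closures, $\bigcup_F A_F$ is norm-dense in~$B_{X\pten Y}$, so for any $\mu>0$ the countable family $\{A_F+\mu B_{X\pten Y}:F\ \text{finite}\}$ covers $B_{X\pten Y}$.

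It remains to bound ${\rm wk}_{X\pten Y}$ of each $A_F+\mu B_{X\pten Y}$ uniformly in~$F$. As the measure of weak non-compactness of a finite union is the maximum of those of its members, ${\rm wk}_{X\pten Y}(\bigcup_{(n,k)\in F}M_n^\delta\otimes \tilde L_k)\le (4C+2)\delta$. Applying the quantitative version of Krein's theorem (see, e.g., \cite{fab-alt-JJ}), which bounds the measure of weak non-compactness of a closed convex hull by twice that of the set, gives ${\rm wk}_{X\pten Y}(A_F)\le 2(4C+2)\delta$; and the elementary estimate ${\rm wk}_{X\pten Y}(A+\mu B_{X\pten Y})\le {\rm wk}_{X\pten Y}(A)+\mu$ yields ${\rm wk}_{X\pten Y}(A_F+\mu B_{X\pten Y})\le 2(4C+2)\delta+\mu$ for every finite~$F$. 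Given $\epsilon>0$, choosing $\delta$ and $\mu$ small enough that $2(4C+2)\delta+\mu<\epsilon$ and invoking Remark~\ref{rem:rwc}, every set in the cover is $\epsilon$-relatively weakly compact; intersecting with $B_{X\pten Y}$ gives the required countable cover of $B_{X\pten Y}$ by $\epsilon$-relatively weakly compact sets, and Theorem~\ref{theo:sWCG} finishes the proof.

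The genuine difficulty lies in the last two paragraphs rather than in the tensor estimate, which is already furnished by Theorem~\ref{theo:pten-sets}: the products $M_n^\delta\otimes\tilde L_k$ only cover the generating set $B_X\otimes\tilde D$, whereas $B_{X\pten Y}$ is its \emph{closed convex hull}, so one is forced to pass to convex combinations of unboundedly many products. The place where this could break down is precisely the control of ${\rm wk}$ under convex hulls: a naive subadditivity bound would scale with $|F|$ and be useless, so it is essential to use the quantitative Krein theorem, whose constant is independent of the number of sets, and then to absorb the residual norm-approximation errors into the parameter through the Minkowski-sum stability of~${\rm wk}$.
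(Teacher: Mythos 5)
Your proof is correct and follows essentially the same route as the paper's: decompose $B_X$ into $\delta$-relatively weakly compact pieces via Theorem~\ref{theo:sWCG}, tensor them against weakly compact generators of~$Y$, apply Theorem~\ref{theo:pten-sets} and Remark~\ref{rem:pten-sets}, pass to convex hulls via the quantitative Krein theorem, and absorb the density error by a Minkowski sum with a small ball before invoking Theorem~\ref{theo:sWCG} again. The only (cosmetic) difference is bookkeeping: the paper arranges an increasing sequence $B_n$ paired with $nK$ so that a single index~$n$ suffices, whereas you index the cover by finite subsets of $\N\times\N$; both are countable and the estimates are identical.
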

\begin{proof}
Let $C>0$ be a constant witnessing that $X$ has the dual quantitative Dunford-Pettis property
or $Y$ has the direct quantitative Dunford-Pettis property.
Fix $\epsilon>0$. Since $X$ is subspace of a WCG space, 
for each $m\in \N$ there is a sequence $(B^m_n)_{n\in \N}$ of $\frac{\epsilon}{4(4C+2)m}$-relatively weakly compact
sets in such a way that $B_X=\bigcup_{n\in \N}B^m_n$ (apply Theorem~\ref{theo:sWCG}). 
Clearly, for any $n,m\in \N$ the set $A_{n,m}:=mB^m_n$ is $\frac{\epsilon}{4(4C+2)}$-relatively weakly compact
and $X=\bigcup_{n,m\in \N}A_{n,m}$. This shows that $X$ can be covered by countably many $\frac{\epsilon}{4(4C+2)}$-relatively weakly compact
sets. Fix a sequence $(B_n)_{n\in \N}$ of $\frac{\epsilon}{4(4C+2)}$-relatively weakly compact subsets of~$X$ such that $X=\bigcup_{n\in \N}B_n$.
We can assume without loss of generality that $B_n \sub B_{n+1}$ for all $n\in \N$.

Now, we use the fact that $Y$ is WCG to choose a weakly compact absolutely convex set
$K\sub Y$ such that $\bigcup_{n\in \N}nK$ is dense in~$Y$. By Theorem~\ref{theo:pten-sets} and Remark~\ref{rem:pten-sets},
for each $n\in \N$ the set $B_n\otimes nK$ is $\frac{\epsilon}{4}$-relatively weakly compact in $X\pten Y$,
hence ${\rm co}(B_n \otimes nK)$ is $\frac{\epsilon}{2}$-relatively weakly compact
(see, e.g., \cite[Theorem~3.64]{fab-alt-JJ}) and so
$$
	C_n:= {\rm co}(B_n \otimes nK) + \frac{\epsilon}{2}B_{X\pten Y}
$$ 
is $\epsilon$-relatively weakly compact (as it can be easily checked). 

Observe that $X\pten Y=\bigcup_{n\in \N}C_n$. Indeed, if $u\in X\pten Y$, then there exist
$x_1,\dots,x_p\in \|u\| B_X$, $y_1,\dots,y_p \in B_Y$ and $\lambda_1,\dots,\lambda_p\geq 0$ with $\sum_{i=1}^p \lambda_i=1$
such that $\|u-\sum_{i=1}^p \lambda_i x_i\otimes y_i\|\leq\frac{\epsilon}{4}$ (see, e.g., \cite[Proposition~2.2]{rya}).
Take $n\in \N$ large enough such that $x_i\in B_n$ and $y_i\in nK+\frac{\epsilon}{4\|u\|}B_Y$ for all $i\in \{1,\dots,p\}$,
and pick $\tilde{y}_i\in nK$ such that $\|y_i-\tilde{y}_i\|\leq \frac{\epsilon}{4\|u\|}$.
Then $\|u-\sum_{i=1}^p \lambda_i x_i\otimes \tilde{y}_i\|\leq\frac{\epsilon}{2}$
and so $u\in C_n$. Therefore, we have $B_{X\pten Y}=\bigcup_{n\in \N}C_n\cap B_X$, with
each $C_n\cap B_X$ being $\epsilon$-relatively weakly compact. Another appeal to Theorem~\ref{theo:sWCG}
ensures that $X\pten Y$ is subspace of a WCG space.
\end{proof}

Our next objective is to show that if $X$ and $Y$ are subspaces of WCG spaces and
$\mathcal{L}(X,Y^*)=\mathcal{K}(X,Y^*)$, then $X\pten Y$ is subspace of a WCG space
(Corollary~\ref{cor:pten-subspaceWCG-qDPalloperators}). We will obtain this as a consequence of a technical
result (Theorem~\ref{theo:pten-subspaceWCG-qDPoperator}) which might be of independent interest.
We first need to introduce some terminology. 

\begin{defi}
Let $X$ be a Banach space.
\begin{enumerate}
\item[(i)] Let $(x_n)_{n\in \N}$ be a bounded sequence in~$X$. We write
$$
	{\rm ca}((x_n)_{n\in \N}):=\inf_{m\in \N}\sup_{n,n'\geq m}\|x_n-x_{n'}\|
$$ 
and
$$
	\delta((x_{n})_{n\in \N}) = \sup_{x^*\in B_{X^*}}
	\inf_{m\in \N}\sup_{n,n'\geq m}|x^*(x_n)-x^*(x_{n'})|.
$$
\item[(ii)] We say that a set $M \sub X$ is {\em $\epsilon$-precompact} (resp., {\em $\epsilon$-weakly precompact}), 
for some $\epsilon\geq 0$, if it is bounded and 
every sequence $(x_n)_{n\in \N}$ in~$M$ admits a subsequence~$(x_{n_k})_{k\in \N}$
such that ${\rm ca}((x_{n_k})_{k\in \N}) \leq \epsilon$ (resp., $\delta((x_{n_k})_{k\in \N})\leq \epsilon$).
\end{enumerate}
\end{defi}

We will also need the following quantitative strengthening of the usual notion of Dunford-Pettis operator:

\begin{defi}\label{defi:operator-qDP}
Let $X$ and $Z$ be Banach spaces and let $c>0$. We say that an operator $T:X \to Z$ is 
{\em $c$-Dunford-Pettis} if, for each $\epsilon\geq 0$ and for each $\epsilon$-weakly precompact set $W \sub X$,
the set $T(W)$ is $c\epsilon$-precompact. We denote by $\mathcal{DP}_c(X,Z)$ the set of all $c$-Dunford-Pettis operators
from~$X$ to~$Z$. 
\end{defi}

\begin{exa}\label{exa:qDP}
\rm Examples of $c$-Dunford-Pettis operators are:
\begin{enumerate}
\item[(i)] Compact operators.
\item[(ii)] Absolutely summing operators. Indeed, combine Pietsch's factorization theorem 
(see, e.g., \cite[2.13]{die-alt}) and \cite[Lemma~2.8]{rod20}. 
\item[(iii)] Any operator from/to a Banach space with the so called quantitative Schur property (see \cite{kal-spu-2,kal-spu-3}).
\end{enumerate}
\end{exa}

In order to prove Theorem~\ref{theo:pten-subspaceWCG-qDPoperator} we will need 
the following characterization of subspaces of WCG spaces (see, e.g., \cite[Theorem~6.13]{fab-alt-JJ}).

\begin{theo}\label{theo:Eberlein}
Let $X$ be a Banach space and let $G \sub X$ be a set such that $X=\overline{{\rm span}}(G)$ and, 
for each $x^*\in X^*$, the set $\{x\in G:x^*(x)\neq 0\}$ is countable. The following statements are equivalent:
\begin{enumerate}
\item[(i)] $X$ is subspace of a WCG space.
\item[(ii)] For each $\varepsilon>0$ there exists a countable decomposition 
$G=\bigcup\limits_{n\in\mathbb N} G_n^\varepsilon$ such that 
$\big\{x \in G_n^\varepsilon: \, |x^*(x)| > \varepsilon\big\}$
is finite for every $n\in\mathbb N$ and for every $x^* \in B_{X^*}$.
\end{enumerate}
\end{theo}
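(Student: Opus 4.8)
The plan is to work throughout with the compact space $K:=(B_{X^*},w^*)$ and to identify each $x\in G$ with the $w^*$-continuous function $\hat{x}\colon K\to\erre$ given by $\hat{x}(x^*):=x^*(x)$. Since $X=\overline{{\rm span}}(G)$, the family $\{\hat{x}:x\in G\}$ separates the points of~$K$, and the standing hypothesis on~$G$ says exactly that this family is \emph{point-countable} (for each $x^*\in K$ only countably many $\hat x$ are nonzero at~$x^*$). Under this dictionary, condition~(ii) asserts that for each $\epsilon>0$ the family $\{\hat{x}:x\in G\}$ splits into countably many pieces, on each of which the level set $\{x:|\hat{x}(x^*)|>\epsilon\}$ is finite for every $x^*\in K$. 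Recall (Subsection~\ref{subsection:WCG}) that $X$ is a subspace of a WCG space if and only if $K$ is Eberlein, i.e.\ $K$ embeds homeomorphically onto a \emph{weakly compact} subset of some $c_0(\Gamma)$; here we use that on bounded subsets of $c_0(\Gamma)$ the weak topology coincides with $\tau_p(\Gamma)$ (approximate $\ell_1(\Gamma)=c_0(\Gamma)^*$ by finitely supported functionals), so that ``weakly compact'' and ``bounded and $\tau_p(\Gamma)$-compact'' mean the same thing inside $c_0(\Gamma)$. Normalizing, we assume $G\sub B_X$, so that $|\hat{x}|\leq 1$ on~$K$.

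For (ii)$\impli$(i) I would build the embedding explicitly. Let $\theta_\delta(t):={\rm sign}(t)\max\{|t|-\delta,0\}$ be the soft-threshold, which is continuous, vanishes on $[-\delta,\delta]$, satisfies $|\theta_\delta(t)|\leq|t|$, and satisfies $\theta_\delta(t)\to t$ as $\delta\to0$. For each $k\in\N$ apply~(ii) with $\epsilon=1/k$ to get a decomposition $G=\bigcup_{n\in\N}G^{1/k}_n$, and set
$$
	\Gamma:=\big\{(x,k,n):\,k,n\in\N,\ x\in G^{1/k}_n\big\},
	\qquad
	\Phi(x^*):=\big(2^{-k-n}\,\theta_{1/k}(x^*(x))\big)_{(x,k,n)\in\Gamma}.
$$
Each coordinate of $\Phi$ is $w^*$-continuous, so $\Phi\colon K\to(\erre^\Gamma,\tau_p(\Gamma))$ is continuous. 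It lands in $c_0(\Gamma)$: for fixed $x^*$ and $\delta>0$ the inequality $2^{-k-n}|\theta_{1/k}(x^*(x))|>\delta$ forces $2^{-k-n}>\delta$, leaving finitely many pairs $(k,n)$, and for each of them $\{x\in G^{1/k}_n:|\theta_{1/k}(x^*(x))|>0\}\sub\{x\in G^{1/k}_n:|x^*(x)|>1/k\}$ is finite by~(ii). The map is injective: if $\Phi(x^*)=\Phi(y^*)$ then $\theta_{1/k}(x^*(x))=\theta_{1/k}(y^*(x))$ for every $x\in G$ and every~$k$ (each $x$ lies in some $G^{1/k}_n$), and letting $k\to\infty$ gives $x^*(x)=y^*(x)$ for all $x\in G$, hence $x^*=y^*$. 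As a continuous injection from a compact space into a Hausdorff space, $\Phi$ is a homeomorphism onto its image, which is bounded and $\tau_p(\Gamma)$-compact, hence weakly compact in $c_0(\Gamma)$. Thus $K$ is Eberlein and $X$ is a subspace of a WCG space.

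For (i)$\impli$(ii), the task is to produce, for a fixed $\epsilon>0$, a countable splitting of $G$ making each level family $\{U^\epsilon_x:x\in G_n^\epsilon\}$ point-finite on~$K$, where $U^\epsilon_x:=\{x^*\in K:|\hat x(x^*)|>\epsilon\}$. Equivalently, the point-countable family $\{U^\epsilon_x:x\in G\}$ of cozero-type subsets of the Eberlein compact~$K$ must be shown to be $\sigma$-point-finite. Here one cannot simply intersect $G$ with the $\epsilon$-relatively-weakly-compact decomposition of $B_X$ provided by Theorem~\ref{theo:sWCG}: a weakly convergent sequence in $G$ with nonzero limiting $x^*$-value lies in a single relatively weakly compact piece yet violates point-finiteness at the corresponding level, so the decomposition must genuinely separate such sequences. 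The plan is instead to exploit the Eberlein structure of~$K$ directly: fix a weakly compact generating structure for~$K$ (for instance a $\sigma$-point-finite $T_0$-separating family of cozero sets of~$K$, which exists precisely because $K$ is Eberlein, or equivalently an embedding $K\hookrightarrow c_0(\Lambda)$), and use the point-countability of $\{\hat x:x\in G\}$ together with $X=\overline{{\rm span}}(G)$ to refine~$G$ against this structure, distributing its elements into countably many blocks within each of which only finitely many exceed level~$\epsilon$ at any point of~$K$. Carrying out this transfer from the abstract Eberlein structure to the concrete generating family~$G$ is the main obstacle, and is where the full strength of the hypotheses on~$G$ is consumed.
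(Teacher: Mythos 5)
The paper does not actually prove this statement: it is quoted as a known characterization with the citation \cite[Theorem~6.13]{fab-alt-JJ}, so there is no in-paper argument to compare against and your proposal has to stand as a complete proof on its own. Your (ii)$\impli$(i) direction does stand: the soft-threshold embedding $x^*\mapsto\bigl(2^{-k-n}\theta_{1/k}(x^*(x))\bigr)$ is a correct and self-contained way to realize $(B_{X^*},w^*)$ as a bounded $\tau_p$-compact (hence weakly compact) subset of $c_0(\Gamma)$; the continuity, the $c_0$-membership via the finiteness of each level set $\{x\in G_n^{1/k}:|x^*(x)|>1/k\}$, and the injectivity via $\theta_{1/k}(t)\to t$ are all in order (the reduction to $G\sub B_X$ is harmless here, and for the converse direction one would additionally split $G=\bigcup_m(G\cap mB_X)$ before rescaling, a point worth one sentence).

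The genuine gap is (i)$\impli$(ii), which you do not prove: the last paragraph describes a ``plan'' and explicitly leaves ``the main obstacle'' --- transferring a $\sigma$-point-finite structure on the Eberlein compact $K$ to the concrete family $\{\hat x:x\in G\}$ --- unresolved. You are right that one cannot simply intersect $G$ with the $\varepsilon$-relatively weakly compact pieces of $B_X$ from Theorem~\ref{theo:sWCG} (a weakly convergent sequence in $G$ with nonzero limit kills point-finiteness at the limit's level), but diagnosing why the easy route fails is not a substitute for a route that works. What is being asked for is exactly the hard half of Farmaki's characterization of Eberlein compacta sitting inside $\Sigma(\erre^G)$: the point-countability hypothesis on $G$ and $X=\overline{{\rm span}}(G)$ give an embedding $x^*\mapsto(x^*(x))_{x\in G}$ of $K$ into $\Sigma(\erre^G)$, and one must show that Eberleinness of $K$ forces the decomposition $G=\bigcup_n G_n^\varepsilon$ with the uniform point-finiteness of the $\varepsilon$-level sets. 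That implication requires a genuine combinatorial argument (a Ramsey/tree-type analysis, or the interpolation machinery behind \cite[Theorem~6.13]{fab-alt-JJ}), not merely ``refining $G$ against'' an abstract $\sigma$-point-finite separating family of cozero sets --- an arbitrary point-countable family of continuous functions on an Eberlein compact need not align with any preassigned such structure. Note also that (i)$\impli$(ii) is precisely the direction invoked at the start of the proof of Theorem~\ref{theo:pten-subspaceWCG-qDPoperator} to produce the decompositions of $\Gamma_1$ and $\Gamma_2$, so it cannot be omitted.
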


\begin{rem}
\rm Observe that condition~(ii) in Theorem~\ref{theo:Eberlein} implies that, for each $x^*\in X^*$, the set $\{x\in G:x^*(x)\neq 0\}$ is countable.
\end{rem}

\begin{theo}\label{theo:pten-subspaceWCG-qDPoperator}
Let $X$ and $Y$ be Banach spaces which are subspaces of WCG spaces. 
Suppose that $\mathcal{L}(X,Y^*)=\mathcal{DP}_c(X,Y^*)$ and $\mathcal{L}(Y,X^*)=\mathcal{DP}_c(Y,X^*)$
for some $c>0$. Then $X\pten Y$ is subspace of a WCG space.
\end{theo}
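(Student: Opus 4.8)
The plan is to exhibit $X\pten Y$ as a space satisfying condition~(ii) of Theorem~\ref{theo:Eberlein} with respect to the set of elementary tensors coming from M-bases of $X$ and $Y$, and then to read off the required countable-support hypothesis from the very same finiteness estimate. First I would fix the generators. Since subspaces of WCG spaces are WLD, both $X$ and $Y$ admit M-bases; normalizing, I choose $G_X=\{x_i:i\in I\}\subseteq B_X$ and $G_Y=\{y_j:j\in J\}\subseteq B_Y$ with $X=\overline{\mathrm{span}}(G_X)$, $Y=\overline{\mathrm{span}}(G_Y)$ and with countable supports $\{i:x^*(x_i)\neq0\}$, $\{j:y^*(y_j)\neq0\}$ for all $x^*\in X^*$, $y^*\in Y^*$. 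As $X,Y$ are subspaces of WCG spaces, Theorem~\ref{theo:Eberlein} applies to each, so for every $\delta>0$ there are countable decompositions $G_X=\bigcup_p G^\delta_{X,p}$ and $G_Y=\bigcup_q G^\delta_{Y,q}$ with $\{x\in G^\delta_{X,p}:|x^*(x)|>\delta\}$ and $\{y\in G^\delta_{Y,q}:|y^*(y)|>\delta\}$ finite for all $x^*\in B_{X^*}$, $y^*\in B_{Y^*}$. I set $G:=\{x_i\otimes y_j:(i,j)\in I\times J\}\subseteq B_{X\pten Y}$, noting $X\pten Y=\overline{\mathrm{span}}(G)$.

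The heart of the argument is a finiteness estimate. Fix $\epsilon>0$ and put $\delta:=\epsilon/(3c+2)$. I claim that for the decomposition $G=\bigcup_{p,q}\big(G^\delta_{X,p}\otimes G^\delta_{Y,q}\big)$ and every $S\in B_{(X\pten Y)^*}$ (so $\|S_X\|,\|S_Y\|\le1$), the set $\{x\otimes y\in G^\delta_{X,p}\otimes G^\delta_{Y,q}:|S(x,y)|>\epsilon\}$ is finite. Suppose not; then there are distinct pairs $(x_k,y_k)$ in $G^\delta_{X,p}\times G^\delta_{Y,q}$ with $|\langle S_X(x_k),y_k\rangle|>\epsilon$. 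If some first coordinate repeats infinitely often, then infinitely many distinct $y_k$ satisfy $|\langle S_X(x),y_k\rangle|>\epsilon>\delta$ with $S_X(x)\in B_{Y^*}$, contradicting finiteness in $G^\delta_{Y,q}$; a symmetric contradiction (via $S_Y(y)\in B_{X^*}$) disposes of a repeating second coordinate. Hence, after passing to a subsequence, I may assume $(x_k)$ and $(y_k)$ consist of distinct elements.

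Now the quantitative Dunford-Pettis hypothesis enters. Since distinct elements of $G^\delta_{X,p}$ satisfy $\limsup_k|x^*(x_k)|\le\delta$ for every $x^*\in B_{X^*}$, one checks that $\{x_k:k\in\N\}$ is $2\delta$-weakly precompact; as $S_X\in\mathcal{L}(X,Y^*)=\mathcal{DP}_c(X,Y^*)$, the set $\{S_X(x_k):k\in\N\}$ is $2c\delta$-precompact. Extracting a subsequence with $\mathrm{ca}\big((S_X(x_{k_l}))_l\big)\le2c\delta$, I fix $m$ and put $z:=S_X(x_{k_m})\in B_{Y^*}$ so that $\|S_X(x_{k_l})-z\|\le3c\delta$ for $l\ge m$. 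Then
\[
|\langle S_X(x_{k_l}),y_{k_l}\rangle|\le|\langle z,y_{k_l}\rangle|+3c\delta,
\]
and since $(y_{k_l})$ are distinct in $G^\delta_{Y,q}$ and $z\in B_{Y^*}$, there is some $l\ge m$ with $|\langle z,y_{k_l}\rangle|\le\delta$, giving $|\langle S_X(x_{k_l}),y_{k_l}\rangle|\le(3c+1)\delta<\epsilon$, a contradiction. This proves the claim, i.e.\ condition~(ii) of Theorem~\ref{theo:Eberlein} for $G$. (Only $\mathcal{DP}_c(X,Y^*)$ is used in this estimate; by symmetry one could argue through $S_Y$, which is where the hypothesis on $\mathcal{L}(Y,X^*)$ would play its role.)

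Finally I would harvest the remaining obligations. The standing hypothesis of Theorem~\ref{theo:Eberlein}, that $\{x\otimes y\in G:S(x,y)\neq0\}$ is countable for each $S$, follows from the claim itself: applying it with $\epsilon=1/n$ writes $\{x\otimes y\in G:|S(x,y)|>1/n\}$ as a countable union of finite sets, and taking the union over $n$ yields countable support. Together with $X\pten Y=\overline{\mathrm{span}}(G)$, the set $G$ then satisfies the hypotheses and condition~(ii) of Theorem~\ref{theo:Eberlein}, so $X\pten Y$ is a subspace of a WCG space. The main obstacle is precisely the key estimate: converting the uniform coordinate-smallness on the pieces $G^\delta_{X,p},G^\delta_{Y,q}$ into $2\delta$-weak precompactness, feeding this through the $c$-Dunford-Pettis operator to gain $2c\delta$-precompactness, and then matching the resulting norm approximation against the coordinatewise decay of $(y_k)$ so as to undercut the threshold $\epsilon$.
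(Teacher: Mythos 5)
Your proof is correct, and while it follows the paper's overall skeleton (verify condition~(ii) of Theorem~\ref{theo:Eberlein} for the set $G$ of elementary tensors built from normalized Markushevich bases), the key estimate is executed quite differently. The paper superimposes two decompositions: it covers $B_X$ and $B_Y$ by $\tfrac{\epsilon'}{4c}$-relatively weakly compact sets via Theorem~\ref{theo:sWCG}, invokes an external lemma to pass to $\tfrac{\epsilon'}{2c}$-weak precompactness, pushes these pieces through $S_X$ and $S_Y$ to obtain finite $\tfrac{\epsilon}{2}$-nets in $B_{Y^*}$ and $B_{X^*}$, and intersects with the Eberlein decompositions of the index sets, producing an explicit finite exceptional rectangle $H$; this uses \emph{both} hypotheses $\mathcal{L}(X,Y^*)=\mathcal{DP}_c(X,Y^*)$ and $\mathcal{L}(Y,X^*)=\mathcal{DP}_c(Y,X^*)$. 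You instead work with a single (two-index) decomposition coming from Theorem~\ref{theo:Eberlein} alone, observing directly that any sequence of pairwise distinct elements of a piece $G^\delta_{X,p}$ is $2\delta$-weakly precompact --- a clean self-contained substitute for the detour through Theorem~\ref{theo:sWCG} --- and then run a contradiction/extraction argument (first reducing to pairwise distinct coordinates, which needs only $\|S_X\|,\|S_Y\|\le 1$, then playing the $3c\delta$-norm approximation of $S_X(x_{k_l})$ against the coordinatewise decay of $(y_{k_l})$). Your route buys two things: it avoids the auxiliary covering machinery, and it reveals that only \emph{one} of the two operator hypotheses is actually needed for the conclusion, so your argument proves a formally stronger statement. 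The price is that the finiteness of the bad set is obtained indirectly by contradiction rather than by exhibiting the exceptional set, and you must (as you do) recover the countable-support hypothesis of Theorem~\ref{theo:Eberlein} from the finiteness claim itself by letting $\epsilon=1/n$. All the quantitative bookkeeping checks out: $\delta=\epsilon/(3c+2)$ gives $(3c+1)\delta<\epsilon$, and the choice of $m$ realizing $\|S_X(x_{k_l})-z\|\le 3c\delta$ from ${\rm ca}\le 2c\delta$ is legitimate.
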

\begin{proof}
Both $X$ and $Y$ are WLD and so they admit Markushevich bases, say 
$$
	\{(x_i,x_i^*):i\in \Gamma_1\} \sub X\times X^*
	\quad\mbox{and}\quad 
	\{(y_j,y_j^*):j\in \Gamma_2\} \sub Y\times Y^*.
$$
Moreover, for every $x^*\in X^*$ (resp. $y^*\in Y^*$), the set $\{i\in \Gamma_1:x^*(x_i)\neq 0\}$ 
(resp., $\{j\in \Gamma_2:y^*(y_j)\neq 0\}$) is countable (see Subsections~\ref{subsection:WCG} and~\ref{subsection:WLD}). 
We can assume that $\|x_i\| \leq 1$ and that $\|y_j\|\leq 1$ for every $(i,j)\in \Gamma_1\times \Gamma_2$.

Define $G:=\{x_i\otimes y_j:(i,j)\in \Gamma_1\times \Gamma_2\}\sub X\pten Y$. Clearly, $X\pten Y=\overline{{\rm span}}(G)$. We will check that $G$ satisfies 
condition~(ii) in Theorem~\ref{theo:Eberlein} and, therefore, $X\pten Y$ is subspace of a WCG space.

Fix $\varepsilon>0$. Pick any $\varepsilon>\varepsilon'>0$. By Theorems~\ref{theo:sWCG}
and~\ref{theo:Eberlein}, we can find countable decompositions 
$$
	B_X=\bigcup\limits_{n\in\mathbb N} B_{1,n},
	\quad
	B_Y=\bigcup\limits_{n\in\mathbb N} B_{2,n},
	\quad
	\Gamma_1=\bigcup\limits_{n\in\mathbb N} \Gamma_{1,n}
	\quad\mbox{and}\quad
	\Gamma_2=\bigcup\limits_{n\in\mathbb N} \Gamma_{2,n}
$$ 
such that, for each $n\in \Nat$, the sets $B_{1,n}$ and $B_{2,n}$ are $\frac{\epsilon'}{4c}$-relatively weakly compact
and the sets
\begin{eqnarray*}
	U(x^*,n) & := & \left\{i \in \Gamma_{1,n}: \, | x^*(x_i)|> \frac{\varepsilon}{2}\right\} \\
	V(y^*,n) & := & \left\{j \in \Gamma_{2,n}: \, | y^*(y_j)|> \frac{\varepsilon}{2}\right\}
\end{eqnarray*} 
are finite for every $x^* \in B_{X^*}$ and for every $y^*\in B_{Y^*}$. 
Define 
$$
	G_{(n_1,m_1,n_2,m_2)}^\varepsilon:=
	\{x_i\otimes y_j:\, (x_i,y_j)\in B_{1,n_1} \times B_{2,n_2} \mbox{ and } (i,j)\in \Gamma_{1,m_1} \times \Gamma_{2,m_2}\}
$$ 
for all $(n_1,m_1,n_2,m_2) \in\mathbb N^4$. Let us prove that the countable decomposition 
$$
	G=\bigcup\limits_{(n_1,m_1,n_2,m_2)\in\mathbb N^4} G_{(n_1,m_1,n_2,m_2)}^\epsilon
$$ 
satisfies the required property.

Fix $S\in B_{(X \pten Y)^*}$ and $(n_1,m_1,n_2,m_2)\in \N^4$. 
Since $B_{1,n_1}$ is $\frac{\epsilon'}{4c}$-relatively weakly compact, 
it is $\frac{\epsilon'}{2c}$-weakly precompact (see \cite[Lemma~3.7]{rod20})
and therefore $S_X(B_{1,n_1})$ is an $\frac{\epsilon'}{2}$-precompact subset of~$B_{Y^*}$ 
(because $S_X$ is a $c$-Dunford-Pettis operator with $\|S_X\|\leq 1$ and $B_{1,n_1}\sub B_X$). 
Therefore, since $\varepsilon>\varepsilon'$, we can 
find finitely many $y_1^*,\ldots, y_p^*\in B_{Y^*}$
such that 
$$
	S_X(B_{1,n_1})\subseteq \bigcup\limits_{k=1}^p B\left(y_k^*,\frac{\epsilon}{2}\right), 
$$
where $B(y_k^*,\frac{\epsilon}{2})$ denotes the closed ball of~$Y^*$ centered at~$y_k^*$ with radius~$\frac{\epsilon}{2}$.
Analogously, there exist finitely many $x_1^*,\ldots, x_q^*\in B_{X^*}$ such that 
$$
	S_Y(B_{2,n_2})\subseteq \bigcup\limits_{l=1}^q B\left(x_l^*,\frac{\epsilon}{2}\right).
$$
Note that 
$$
	H:=\left(\bigcup_{l=1}^q U(x_l^*,m_1) \right) \times \left(\bigcup_{k=1}^p V(y_k^*,m_2)\right) \sub \Gamma_1\times \Gamma_2
$$ 
is finite. In order to finish the proof we will show that 
$$
	\big|\langle S,x_i\otimes y_j\rangle\big| \leq \varepsilon
	\quad\mbox{for every } x_i\otimes y_j \in G_{(n_1,m_1,n_2,m_2)}^\epsilon \mbox{ with }(i,j)\not\in H.
$$ 
To this end, suppose for instance that $i\notin \bigcup_{l=1}^q U(x_l^*,m_1)$ (the other case runs similarly). 
Since $y_j \in B_{2,n_2}$, there is $l\in \{1,\dots,q\}$ such that 
$\Vert x_l^*-S_Y(y_j)\Vert \leq \frac{\varepsilon}{2}$. 
Since $i\not\in U(x_l^*,m_1)$, we have $\vert x_l^*(x_i)\vert\leq \frac{\varepsilon}{2}$ and therefore
$$
	\vert \langle S, x_i\otimes y_j\rangle|=\vert \langle S_Y(y_j),x_i \rangle \vert
	\leq \Vert x_l^*-S_Y(y_j)\Vert + \vert x_l^*(x_i)\vert\leq \varepsilon,
$$
as required. The proof is finished.
\end{proof}

It is well known that, for arbitrary Banach spaces~$X$ and~$Y$, 
the equalities $\mathcal{L}(X,Y^*)=\mathcal{K}(X,Y^*)$ and $\mathcal{L}(Y,X^*)=\mathcal{K}(Y,X^*)$
are equivalent. Indeed, this is a consequence of Schauder's theorem (saying an operator is compact if and only if its adjoint is compact)
and the fact that every $T\in \mathcal{L}(Y,X^*)$ coincides with the restriction of $(T^*|_X)^*$ to~$Y$. 
 
From Theorem~\ref{theo:pten-subspaceWCG-qDPoperator} we get:

\begin{cor}\label{cor:pten-subspaceWCG-qDPalloperators}
Let $X$ and $Y$ be Banach spaces which are subspaces of WCG spaces. If
$\mathcal{L}(X,Y^*)=\mathcal{K}(X,Y^*)$, then $X\pten Y$ is subspace of a WCG space.
\end{cor}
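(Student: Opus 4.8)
The plan is to reduce the statement directly to Theorem~\ref{theo:pten-subspaceWCG-qDPoperator}, whose hypotheses demand that every operator in $\mathcal{L}(X,Y^*)$ and in $\mathcal{L}(Y,X^*)$ be $c$-Dunford-Pettis for one common constant $c>0$. Since we are given $\mathcal{L}(X,Y^*)=\mathcal{K}(X,Y^*)$, the first thing I would record is that every compact operator is $c$-Dunford-Pettis for \emph{every} $c>0$. This is essentially Example~\ref{exa:qDP}(i), and it is immediate from Definition~\ref{defi:operator-qDP}: a compact operator $T$ maps every bounded set $W$ to a relatively norm compact set, so any sequence in $T(W)$ admits a norm-convergent subsequence and hence a subsequence with ${\rm ca}$ equal to~$0$; thus $T(W)$ is $0$-precompact, and in particular $c\epsilon$-precompact for all $\epsilon\geq 0$ and all $c>0$.

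Next I would use the symmetry of compactness. The remark preceding the corollary, which follows from Schauder's theorem, states that $\mathcal{L}(X,Y^*)=\mathcal{K}(X,Y^*)$ is equivalent to $\mathcal{L}(Y,X^*)=\mathcal{K}(Y,X^*)$. Hence from the single hypothesis I obtain at once that \emph{both} $\mathcal{L}(X,Y^*)$ and $\mathcal{L}(Y,X^*)$ consist entirely of compact operators.

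Combining these two observations and fixing any constant, say $c=1$, yields $\mathcal{L}(X,Y^*)=\mathcal{DP}_c(X,Y^*)$ and $\mathcal{L}(Y,X^*)=\mathcal{DP}_c(Y,X^*)$ with $c=1$. Since $X$ and $Y$ are assumed to be subspaces of WCG spaces, Theorem~\ref{theo:pten-subspaceWCG-qDPoperator} (applied with $c=1$) then gives that $X\pten Y$ is a subspace of a WCG space, which is exactly the desired conclusion.

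As for difficulty, there is essentially no genuine obstacle here: all of the substantial work has already been carried out in Theorem~\ref{theo:pten-subspaceWCG-qDPoperator}. The only points requiring a moment of care are verifying that compactness yields the $c$-Dunford-Pettis property with a constant independent of the operator (indeed with any constant), and invoking the Schauder-type equivalence so that the single assumption $\mathcal{L}(X,Y^*)=\mathcal{K}(X,Y^*)$ supplies both symmetric hypotheses of the theorem.
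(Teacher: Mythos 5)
Your proposal is correct and follows exactly the route the paper intends: invoke the Schauder-type equivalence to obtain $\mathcal{L}(Y,X^*)=\mathcal{K}(Y,X^*)$ as well, observe (as in Example~\ref{exa:qDP}(i)) that compact operators are $c$-Dunford--Pettis for any $c>0$ since they send bounded sets to relatively norm compact, hence $0$-precompact, sets, and then apply Theorem~\ref{theo:pten-subspaceWCG-qDPoperator}. The paper leaves these steps implicit, so your write-up simply makes the same argument explicit.
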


\section{Hilbert generated spaces, their subspaces and projective tensor products}\label{section:Hilbertpten}

In this section we study the stability under projective tensor products of the property
of being subspace of a Hilbert generated space (recalled in Subsection~\ref{subsection:WCG})
for some concrete Banach spaces. We begin with a general result.

\begin{pro}\label{pro:separable-pten-HG}
Let $X$ and $Y$ be Banach spaces. If $X$ is separable and $Y$ is Hilbert generated (resp., subspace of a Hilbert generated space), then
$X\pten Y$ is Hilbert generated (resp., subspace of a Hilbert generated space).
\end{pro}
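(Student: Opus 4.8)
The plan is to treat the two assertions separately: the \emph{Hilbert generated} case admits a direct generating argument (in the spirit of Proposition~\ref{pro:separable-pten-WCG}), whereas the \emph{subspace of a Hilbert generated space} case is better handled on the dual ball (in the spirit of Proposition~\ref{pro:separable-pten-sWCG}).

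For the first case, suppose $Y$ is Hilbert generated and fix a Hilbert space~$H$ together with an operator $S:H\to Y$ of norm~$\leq 1$ with dense range. Let $(x_n)_{n\in\N}$ be a dense sequence in~$B_X$. I would form the Hilbert space $\mathcal{H}:=\big(\bigoplus_{n\in\N}H\big)_{\ell_2}$ (the $\ell_2$-sum of countably many copies of~$H$) and define
\[
	\Psi:\mathcal{H}\to X\pten Y,\qquad \Psi\big((h_n)_{n\in\N}\big):=\sum_{n\in\N}2^{-n}\,x_n\otimes S(h_n).
\]
That $\Psi$ is a well-defined bounded operator follows from the Cauchy--Schwarz inequality, since
\[
	\sum_{n\in\N}\big\|2^{-n}x_n\otimes S(h_n)\big\|\leq \sum_{n\in\N}2^{-n}\|h_n\|\leq \Big(\sum_{n\in\N}4^{-n}\Big)^{1/2}\big\|(h_n)_{n\in\N}\big\|_{\mathcal{H}},
\]
so the defining series converges absolutely.

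The only point needing a small argument is that $\Psi$ has dense range. Evaluating at sequences supported at a single coordinate shows that the (linear) range of~$\Psi$ contains $x_m\otimes S(h)$ for all $m\in\N$ and $h\in H$; since $\{x_m:m\in\N\}$ is dense in~$B_X$ and $S(H)$ is dense in~$Y$, the standard estimate $\|x\otimes y-x'\otimes y'\|\leq\|x-x'\|\,\|y\|+\|x'\|\,\|y-y'\|$ shows that every elementary tensor lies in $\overline{{\rm span}}\{x_m\otimes S(h):m\in\N,\,h\in H\}$, which therefore equals $X\pten Y$. Thus $\Psi$ has dense range and, $\mathcal{H}$ being a Hilbert space, $X\pten Y$ is Hilbert generated.

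For the subspace case, suppose $Y$ is a subspace of a Hilbert generated space, equivalently that $(B_{Y^*},w^*)$ is uniform Eberlein (Subsection~\ref{subsection:WCG}). Here the main obstacle is that one \emph{cannot} simply embed $X\pten Y$ into $X\pten Z$ for a Hilbert generated $Z\supseteq Y$, because the projective tensor product does not preserve subspaces; this forces the argument onto the dual ball. Instead I would recall from the proof of Proposition~\ref{pro:separable-pten-C} the affine homeomorphic embedding
\[
	\xi:(B_{(X\pten Y)^*},w^*)\to (B_{Y^*},w^*)^{\N},\qquad \xi(S):=\big(S_X(x_n)\big)_{n\in\N},
\]
where $(x_n)_{n\in\N}$ is a dense sequence in~$B_X$. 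Since the class of uniform Eberlein compacta is stable under countable products (by the usual rescaling argument embedding $\prod_n K_n$ into an $\ell_2$-sum of the ambient Hilbert spaces) and under passing to closed subspaces, the product $(B_{Y^*},w^*)^{\N}$ is uniform Eberlein, hence so is its closed subspace $\xi(B_{(X\pten Y)^*})$, which is homeomorphic to $(B_{(X\pten Y)^*},w^*)$. By the characterization recalled above, this says precisely that $X\pten Y$ is a subspace of a Hilbert generated space, completing the proof.
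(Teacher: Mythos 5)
Your proof is correct and follows essentially the same route as the paper: a weighted operator from an $\ell_2$-sum of copies of the generating Hilbert space for the first assertion (the paper uses weights $1/n$ where you use $2^{-n}$, an immaterial difference), and the dual-ball embedding into $(B_{Y^*},w^*)^{\N}$ together with the stability of uniform Eberlein compacta under countable products for the second. Nothing further is needed.
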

\begin{proof} Suppose first that $Y$ is Hilbert generated and fix an operator $T:\ell_2(\Gamma) \to Y$ with dense range, 
for some non-empty set~$\Gamma$. Let $(x_n)_{n\in \N}$ be a dense sequence in~$B_X$ and consider the operator
$$
	\tilde{T}: \ell_2(\ell_2(\Gamma)) \to X\pten Y,
	\quad
	\tilde{T}\big((u_n)_{n\in \N}\big):= \sum_{n\in \N}\frac{1}{n} x_n \otimes T(u_n),
$$	
where  $\ell_2(\ell_2(\Gamma))$ stands for the $\ell_2$-sum of countably many copies of~$\ell_2(\Gamma)$, i.e., 
the Banach space of all sequences $(u_n)_{n\in\N}$ in~$\ell_2(\Gamma)$ such that $(\|u_n\|)_{n\in\N} \in \ell_2$. It is immediate that 
$\tilde{T}$ has dense range, so $X\pten Y$ is Hilbert generated.

The argument for subspaces of Hilbert generated spaces follows the same lines
of Proposition~\ref{pro:separable-pten-C}, bearing in mind that a Banach space~$Z$ is subspace of a Hilbert generated space
if and only if $(B_{Z^*},w^*)$ is uniform Eberlein compact (see Subsection~\ref{subsection:WCG}) and
the fact that uniform Eberlein compactness is preserved by countable products (see, e.g., \cite[Theorem~3.6]{wag}).
\end{proof}

Clearly, the space $\ell_p(\Gamma)$ is Hilbert generated for any $2\leq p < \infty$ and for any non-empty set~$\Gamma$, but it 
fails to be Hilbert generated when $1<p<2$ and $\Gamma$ is uncountable (see \cite[Lemma~6]{fab-alt-J-4}). 
Still in this case $\ell_p(\Gamma)$ is subspace of a Hilbert generated space, because so is every superreflexive space. 
If $1<p,q<\infty$ satisfy $1/p+1/q<1$, then the projective tensor product $\ell_p(\Gamma) \pten \ell_q(\Gamma)$
is reflexive (as we mentioned in the introduction) but cannot be superreflexive unless $\Gamma$ is finite (see \cite[p.~522]{bu-die}).
We next prove that it is always subspace of a Hilbert generated space:

\begin{theo}\label{theo:lp-pten-lq-sHG}
Let $\Gamma$ and $\Delta$ be non-empty sets and let $1< p,q<\infty$ be such that $1/p+1/q<1$. 
Then	$\ell_p(\Gamma)\pten \ell_q(\Delta)$ is subspace of a Hilbert generated space.
\end{theo}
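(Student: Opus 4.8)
The plan is to show that $(B_{Z^*},w^*)$ is uniform Eberlein, where $Z:=\ell_p(\Gamma)\pten\ell_q(\Delta)$; by the characterization recalled in Subsection~\ref{subsection:WCG} this gives that $Z$ is a subspace of a Hilbert generated space. I identify $Z^*=\mathcal{B}(\ell_p(\Gamma),\ell_q(\Delta))=\mathcal{L}(\ell_p(\Gamma),\ell_{q^*}(\Delta))$ and encode each $S\in Z^*$ by its matrix $a_{ij}(S):=S(e_i,\tilde{e}_j)=\langle S,e_i\otimes\tilde{e}_j\rangle$, where $\{e_i\}$ and $\{\tilde{e}_j\}$ are the canonical bases of $\ell_p(\Gamma)$ and $\ell_q(\Delta)$. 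Since $\{e_i\otimes\tilde{e}_j:(i,j)\in\Gamma\times\Delta\}$ spans a dense subspace of $Z$, the affine map $\Phi:(B_{Z^*},w^*)\to([-1,1]^{\Gamma\times\Delta},\tau_p(\Gamma\times\Delta))$ given by $\Phi(S):=(a_{ij}(S))_{(i,j)}$ is injective and $w^*$-to-$\tau_p$ continuous (each coordinate being $w^*$-continuous), hence a homeomorphism onto its image $K:=\Phi(B_{Z^*})$.

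The crux is a uniform \emph{bounded width} estimate: for every $\varepsilon>0$ there is $C(\varepsilon)<\infty$, depending only on $p,q,\varepsilon$, with $\sup_{x\in K}\big|\{(i,j):|x_{ij}|>\varepsilon\}\big|\le C(\varepsilon)$. To prove it, fix $S\in B_{Z^*}$ and a finite set $F\subseteq\Gamma\times\Delta$ with $|a_{ij}(S)|>\varepsilon$ on $F$. Reading rows and columns of the matrix as $S_X(e_i)\in\ell_{q^*}(\Delta)$ and $S_Y(\tilde{e}_j)\in\ell_{p^*}(\Gamma)$ yields $\sum_j|a_{ij}|^{q^*}\le1$ and $\sum_i|a_{ij}|^{p^*}\le1$, so in the bipartite graph $F$ every row has degree $<\varepsilon^{-q^*}$ and every column has degree $<\varepsilon^{-p^*}$. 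By K\"onig's edge-colouring theorem, $F$ splits into at most $\Delta_0:=\lceil\max(\varepsilon^{-p^*},\varepsilon^{-q^*})\rceil$ matchings. For a single matching $M=\{(i,\pi(i)):i\in I_M\}$ I test $S$ against $x=(\xi_i)_{i\in I_M}\in\ell_p(\Gamma)$ and $y=(\xi_i\,\mathrm{sign}(a_{i\pi(i)}))_{i\in I_M}\in\ell_q(\Delta)$ with independent random signs $\xi_i$: taking expectations kills the off-diagonal contributions, so some sign choice gives $\langle S,x\otimes y\rangle\ge\varepsilon|I_M|$, whence $\varepsilon|I_M|^{1/r}\le\|S\|\le1$ with $1/r:=1/q^*-1/p=1-\tfrac1p-\tfrac1q>0$; thus $|M|\le\varepsilon^{-r}$. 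Summing over the $\Delta_0$ matchings yields $|F|\le\Delta_0\,\varepsilon^{-r}=:C(\varepsilon)$.

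Finally I convert the width bound into a weak Hilbert-space embedding. For dyadic levels $k\in\N$ fix a continuous $\eta:[0,\infty)\to[0,1]$ with $\eta\equiv0$ on $[0,1]$ and $\eta\equiv1$ on $[2,\infty)$, and set $g_{(i,j),k}(x):=c_k\,\mathrm{sign}(x_{ij})\,\eta(2^k|x_{ij}|)$ on $K$. Each $g_{(i,j),k}$ is $\tau_p$-continuous (the sign jump is absorbed because $\eta$ vanishes near $0$), it is nonzero only where $|x_{ij}|>2^{-k}$, and the family separates the points of $K$. Choosing $c_k>0$ with $\sum_k c_k^2\,C(2^{-k})<\infty$ gives $\sup_{x\in K}\sum_{(i,j),k} g_{(i,j),k}(x)^2<\infty$, so the map $x\mapsto(g_{(i,j),k}(x))$ takes $K$ homeomorphically onto a weakly compact subset of $\ell_2\big((\Gamma\times\Delta)\times\N\big)$. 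Hence $K\cong(B_{Z^*},w^*)$ is uniform Eberlein and $\ell_p(\Gamma)\pten\ell_q(\Delta)$ is a subspace of a Hilbert generated space. The main obstacle is the width estimate; the essential idea there is to reduce, via K\"onig's theorem, to matchings, on which the random-sign test turns the forbidden configuration into a diagonal operator and produces the sharp exponent $r$.
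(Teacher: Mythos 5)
Your argument is correct, and its overall architecture coincides with the paper's: both proofs reduce the statement to a uniform bound, over $S$ in the dual unit ball, on the cardinality of $\{(i,j):|S(e_i\otimes\tilde{e}_j)|>\varepsilon\}$; both extract the row and column degree bounds from $\|S_X(e_i)\|_{q^*}\le 1$ and $\|S_Y(\tilde{e}_j)\|_{p^*}\le 1$; and both exploit $1/p+1/q<1$ through an estimate along a matching. The two key sub-steps, however, are carried out differently. For the combinatorics, the paper extracts a single matching of size $m$ from any set of size $m(r+s)$ by an elementary greedy argument (Lemma~\ref{lem:cansino}) and concludes by contradiction, whereas you decompose the whole set into at most $\Delta_0$ matchings via K\"onig's edge-colouring theorem, which yields an explicit bound $C(\varepsilon)$. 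For the analytic step, the paper applies Grothendieck's inequality to the $m\times m$ matrix viewed as an operator from $\ell_\infty^m$ to $\ell_1^m$, while you test $S$ directly against random-sign vectors in $\ell_p(\Gamma)\otimes\ell_q(\Delta)$: the Rademacher averaging kills the off-diagonal terms and gives $\sum_i |a_{i\pi(i)}|\le |I_M|^{1/p+1/q}$ with no Grothendieck constant --- a genuinely more elementary route for this step. Finally, you convert the width bound into a uniform Eberlein embedding by hand, whereas the paper simply invokes Theorem~\ref{theo:sHG} with the trivial decomposition (cf.\ Remark~\ref{rem:HG}); citing that theorem would shorten your last paragraph considerably.

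One small repair is needed in that last paragraph: mere continuity of $\eta$ with $\eta\equiv 0$ on $[0,1]$ and $\eta\equiv 1$ on $[2,\infty)$ does not guarantee that the family $\{g_{(i,j),k}\}$ separates the points of $K$. If $\eta$ already attains the value $1$ somewhere inside $(1,2)$, two points of $K$ differing only in one coordinate, with the same sign there (say values $1$ and $0.9$ when $\eta(1.8)=1$), are not separated by any $g_{(i,j),k}$. Choosing $\eta$ strictly increasing on $[1,2]$, e.g.\ $\eta(t)=\min\{1,\max\{0,t-1\}\}$, fixes this; alternatively, the issue disappears entirely if you route the conclusion through Theorem~\ref{theo:sHG}.
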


The proof of Theorem~\ref{theo:lp-pten-lq-sHG} uses the following elementary lemma:

\begin{lem}\label{lem:cansino}
Let $I$ and $J$ be sets, $\Omega \sub I\times J$ and $m,r,s\in \N$. Suppose that:
\begin{enumerate}
\item[(i)] $|\{j\in J: (i,j)\in \Omega\}|\leq r$ for every $i\in I$;
\item[(ii)] $|\{i\in I: (i,j)\in \Omega\}|\leq s$ for every $j\in J$;
\item[(iii)] $|\Omega| \geq m(r+s)$.
\end{enumerate}
Then there exist sets $\{i_1,\dots,i_m\}\sub I$ and $\{j_1,\dots,j_m\}\sub J$ with cardinality~$m$ such that
$(i_k,j_k)\in \Omega$ for all $k\in \{1,\dots,m\}$.
\end{lem}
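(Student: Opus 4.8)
The plan is to build the desired pairs $(i_1,j_1),\dots,(i_m,j_m)$ greedily, one at a time, using the degree bounds (i)~and~(ii) to guarantee that after each choice enough edges of $\Omega$ remain unused. Formally I would argue by finite induction on $k$, constructing for each $k\in\{0,1,\dots,m\}$ pairs $(i_1,j_1),\dots,(i_k,j_k)\in\Omega$ such that $i_1,\dots,i_k$ are pairwise distinct and $j_1,\dots,j_k$ are pairwise distinct; the case $k=m$ then yields the statement.

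The quantity to control at each stage is the set of edges still available after $k$ choices,
$$
	\Omega_k:=\big\{(i,j)\in\Omega:\ i\notin\{i_1,\dots,i_k\}\ \text{and}\ j\notin\{j_1,\dots,j_k\}\big\}.
$$
Each edge of $\Omega\setminus\Omega_k$ has its first coordinate in $\{i_1,\dots,i_k\}$ or its second coordinate in $\{j_1,\dots,j_k\}$; since by~(i) at most $r$ edges of $\Omega$ share a given first coordinate and by~(ii) at most $s$ edges share a given second coordinate, a crude union bound gives
$$
	|\Omega\setminus\Omega_k|\le kr+ks=k(r+s).
$$
Combining this with hypothesis~(iii) yields $|\Omega_k|\ge|\Omega|-k(r+s)\ge(m-k)(r+s)$.

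The inductive step is then immediate. If $k<m$ then $(m-k)(r+s)\ge r+s\ge 1$ (the degenerate case $r+s=0$ forces $\Omega=\emptyset$ and may be assumed away), so $\Omega_k\ne\emptyset$ and we may select any $(i_{k+1},j_{k+1})\in\Omega_k$. By the very definition of $\Omega_k$, this new first coordinate differs from $i_1,\dots,i_k$ and the new second coordinate differs from $j_1,\dots,j_k$, so the enlarged family again consists of pairs with distinct first coordinates and distinct second coordinates. Iterating until $k=m$ produces the required $\{i_1,\dots,i_m\}\subseteq I$ and $\{j_1,\dots,j_m\}\subseteq J$, each of cardinality~$m$, with $(i_k,j_k)\in\Omega$ for all $k$. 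I do not expect any genuine obstacle here: the whole content is the counting inequality $|\Omega\setminus\Omega_k|\le k(r+s)$, and the only point deserving a word of care is the trivial degenerate case $r+s=0$.
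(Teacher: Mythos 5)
Your proof is correct and is essentially the paper's own argument: the paper also selects the pairs recursively, bounding by $r+s$ the number of edges of $\Omega$ sharing a coordinate with any already chosen pair, which is exactly your union bound $|\Omega\setminus\Omega_k|\le k(r+s)$. (The degenerate case $r+s=0$ does not arise since $r,s\in\N$.)
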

\begin{proof}
For each $(i,j)\in I\times J$, write
$$
	\Omega_{(i,j)}:=\{(i',j')\in \Omega: \, \text{$i=i'$ or $j=j'$}\}, 
$$
so that $|\Omega_{(i,j)}|\leq r+s$. Therefore, we have 
$$
	\Omega \setminus \bigcup_{(i,j)\in F}\Omega_{(i,j)} \neq \emptyset
$$
for every $F \sub I\times J$ with $|F|< m$. Now, we can apply this fact recursively to get $(i_1,j_1), \dots, (i_m,j_m) \in \Omega$
in such a way that $(i_{k'},j_{k'})\not \in \bigcup_{k<k'} \Omega_{(i_k,j_k)}$ for all $k'\leq m$, hence
$i_k\neq i_{k'}$ and $j_k\neq j_{k'}$ whenever $k\neq k'$.
\end{proof}

Another key ingredient is the following characterization of subspaces of Hilbert generated spaces
(see, e.g., \cite[Theorem~6.30]{fab-alt-JJ}) which should be compared with Theorem~\ref{theo:Eberlein}:

\begin{theo}\label{theo:sHG}
Let $X$ be a Banach space. The following statements are equivalent:
\begin{enumerate}
\item[(i)] $X$ is subspace of a Hilbert generated space.
\item[(ii)] There is a set $G\subseteq B_X$ with $X=\overline{{\rm span}}(G)$ such that for every $\varepsilon>0$ there is a countable decomposition 
$G=\bigcup_{n\in \N} G_n^\varepsilon$ such that 
$$
	\big|\{x\in G_n^\varepsilon : |x^*(x)|>\varepsilon\}\big|<n 
	\quad \text{for every $n\in\mathbb{N}$ and for every $x^\ast\in B_{X^\ast}$}.
$$
\end{enumerate}
\end{theo}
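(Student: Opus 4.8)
The plan is to lean on the fact recorded in Subsection~\ref{subsection:WCG} that $X$ is a subspace of a Hilbert generated space if and only if $(B_{X^*},w^*)$ is uniform Eberlein, and to prove the equivalence of (ii) with this condition. The direction (ii)$\impli$(i) is entirely constructive: from the decompositions I build an explicit homeomorphic embedding of $(B_{X^*},w^*)$ into the weak topology of a ball of some $\ell_2(\Gamma)$, while (i)$\impli$(ii) is the harder direction and carries the real difficulty.

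For (ii)$\impli$(i), I fix for each $m\in\N$ the decomposition $G=\bigcup_n G_n^{1/m}$ and set $\Gamma:=\{(x,n,m):x\in G_n^{1/m}\}$. Using the truncations ${\rm sign}(t)\max\{0,|t|-1/m\}=:\psi_m(t)$, which are continuous and satisfy $|\psi_m(t)-t|\le 1/m$ and $|\psi_m(t)|\le|t|$, together with weights $a_{n,m}>0$ chosen so that $\sum_{n,m}a_{n,m}^2\,n\le 1$ (e.g. $a_{n,m}^2=2^{-n-m}/n$), I define
$$\Phi(x^*)(x,n,m):=a_{n,m}\,\psi_m\big(x^*(x)\big),\qquad x^*\in B_{X^*}.$$
The crucial point is that, for fixed $x^*$, $n$ and $m$, the coordinates with $\psi_m(x^*(x))\neq 0$ are exactly those $x\in G_n^{1/m}$ with $|x^*(x)|>1/m$, of which there are fewer than $n$ by hypothesis (ii); since each such term has absolute value at most $1$, one gets $\|\Phi(x^*)\|_2^2\le\sum_{n,m}a_{n,m}^2\,n\le 1$, so $\Phi$ maps into $B_{\ell_2(\Gamma)}$. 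Each coordinate of $\Phi$ is $w^*$-continuous, hence $\Phi$ is $w^*$-to-$\tau_p(\Gamma)$ continuous and, being bounded, $w^*$-to-weak continuous. Finally $\Phi$ is injective: if $x^*\neq y^*$ then, since $X=\overline{{\rm span}}(G)$, some $x\in G$ has $x^*(x)\neq y^*(x)$, and for $m$ large the estimate $|\psi_m(s)-\psi_m(t)|\ge|s-t|-2/m$ forces $\Phi(x^*)$ and $\Phi(y^*)$ to differ at the relevant coordinate. Thus $\Phi$ is a homeomorphism onto a weakly compact subset of $\ell_2(\Gamma)$, so $(B_{X^*},w^*)$ is uniform Eberlein and (i) holds.

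For (i)$\impli$(ii) I start from $(B_{X^*},w^*)$ being uniform Eberlein, i.e. homeomorphic to a weakly compact subset of some $\ell_2(\Gamma)$; this yields a separating family $\{\phi_\gamma:\gamma\in\Gamma\}\subseteq C(B_{X^*},w^*)$ with $\sum_\gamma\phi_\gamma(x^*)^2\le 1$ for all $x^*$, and in particular $|\{\gamma:|\phi_\gamma(x^*)|>\epsilon\}|\le\epsilon^{-2}$ \emph{uniformly} in $x^*$. Since $X$ is also a subspace of a WCG space, Theorem~\ref{theo:Eberlein} (applied to a generating set $G\subseteq B_X$ coming from a Markushevich basis with countably supported coordinate functionals) already provides, for each $\epsilon$, a decomposition $G=\bigcup_n G_n^\epsilon$ in which every set $\{x\in G_n^\epsilon:|x^*(x)|>\epsilon\}$ is finite for each $x^*$. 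The task is then to refine this so that these finite sets have size $<n$, uniformly in $x^*$, by extracting the quantitative cardinality control $\epsilon^{-2}$ from the Hilbert structure and distributing it across the countably many blocks.

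The main obstacle is precisely this last realization step, and it is where the bulk of the work lies. One cannot hope for a single family $\{x_\gamma\}\subseteq B_X$ of \emph{linear} evaluations with a global bound $\sup_{x^*\in B_{X^*}}\sum_\gamma x^*(x_\gamma)^2<\infty$, since such a family would make $X$ itself Hilbert generated, which fails for proper subspaces; hence the nonlinear coordinates $\phi_\gamma$ genuinely cannot be taken linear. The content of (i)$\impli$(ii) is thus to trade the uniform \emph{global} $\ell_2$-control available on the $\phi_\gamma$ for the weaker \emph{per-block} control of (ii) attached to the linear functionals $x\in G$, which forces one to build $G$ adapted to the embedding rather than from an arbitrary basis and to keep the per-block cardinality bound uniform over all of $B_{X^*}$. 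This delicate interweaving of the nonlinear uniform Eberlein embedding with a linear generating set is exactly the technical heart carried out in \cite[Theorem~6.30]{fab-alt-JJ}.
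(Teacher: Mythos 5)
First, a point of reference: the paper does not prove this theorem at all. It is quoted as a known result, with the citation \cite[Theorem~6.30]{fab-alt-JJ}, so there is no internal proof to compare against; the only thing the paper does with it is apply the implication (ii)$\impli$(i) in the proof of Theorem~\ref{theo:lp-pten-lq-sHG}.

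Your direction (ii)$\impli$(i) is correct and essentially self-contained, modulo the fact recorded in Subsection~\ref{subsection:WCG} that $X$ is a subspace of a Hilbert generated space if and only if $(B_{X^*},w^*)$ is uniform Eberlein. The construction checks out: the truncation $\psi_m$ annihilates every coordinate with $|x^*(x)|\le 1/m$, so by hypothesis at most $n-1$ coordinates survive in the block indexed by $(n,m)$, each of modulus at most $1$; with $a_{n,m}^2=2^{-n-m}/n$ this gives $\|\Phi(x^*)\|_2^2\le\sum_{n,m}a_{n,m}^2\,n\le 1$. Coordinatewise $w^*$-continuity plus boundedness yields $w^*$-to-weak continuity, since the weak topology and the topology of coordinatewise convergence agree on bounded subsets of $\ell_2(\Gamma)$; and the estimate $|\psi_m(s)-\psi_m(t)|\ge |s-t|-2/m$, applied to a point of $G$ separating $x^*$ from $y^*$ (such a point exists because $X=\overline{{\rm span}}(G)$), gives injectivity. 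Compactness of $(B_{X^*},w^*)$ then upgrades $\Phi$ to a homeomorphism onto a weakly compact subset of $\ell_2(\Gamma)$. This is a genuine proof of the implication the paper actually needs.

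The gap is the other direction: for (i)$\impli$(ii) you prove nothing. What you offer is a correct diagnosis of the obstruction --- the coordinates $\phi_\gamma$ of a uniform Eberlein embedding of $(B_{X^*},w^*)$ are nonlinear, and a \emph{linear} family in $B_X$ with a global uniform bound $\sup_{x^*\in B_{X^*}}\sum_\gamma x^*(x_\gamma)^2<\infty$ is too much to ask --- followed by an explicit deferral of ``the technical heart'' to \cite[Theorem~6.30]{fab-alt-JJ}, which is precisely the reference the statement is quoted from. Identifying where the difficulty lies is not the same as resolving it: nothing in your text produces the generating set $G$, the decompositions $G_n^\varepsilon$, or the uniform cardinality bound $<n$ from the uniform Eberlein structure; the appeal to Theorem~\ref{theo:Eberlein} only yields finiteness of the exceptional sets, with no control on their size, and you do not bridge that quantitative gap. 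So the proposal is half a proof: one correct, constructive implication, and one implication that remains entirely outsourced to the literature --- which is admittedly no less than the paper itself provides, but it does not constitute a proof of the equivalence.
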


\begin{proof}[Proof of Theorem~\ref{theo:lp-pten-lq-sHG}]
We can assume without loss of generality that $\Gamma=\Delta$. Indeed, observe that if $|\Delta|\leq |\Gamma|$, then
$\ell_q(\Delta)$ is a $1$-complemented subspace of $\ell_q(\Gamma)$
and so $\ell_p(\Gamma)\pten \ell_q(\Delta)$ embeds isometrically into $\ell_p(\Gamma)\pten \ell_q(\Gamma)$
(see, e.g., \cite[Proposition~2.4]{rya}).

Let $\{e_\gamma:\gamma\in \Gamma\}$ and $\{\tilde{e_\gamma}:\gamma\in \Gamma\}$
be the canonical bases of $\ell_p(\Gamma)$ and $\ell_q(\Gamma)$, respectively.
The set $G:=\{e_\gamma\otimes \tilde{e_{\gamma'}}: (\gamma,\gamma')\in \Gamma\times \Gamma\} \sub B_{\ell_p(\Gamma)\pten \ell_q(\Gamma)}$ 
satisfies $\ell_p(\Gamma)\pten \ell_q(\Gamma)=\overline{{\rm span}}(G)$.
By Theorem~\ref{theo:sHG}, in order to show that $\ell_p(\Gamma)\pten \ell_q(\Gamma)$ is subspace of a Hilbert generated
space it is enough to prove that for every $\varepsilon>0$ there exists $n\in\mathbb{N}$ 
such that for all $S \in B_{(\ell_p(\Gamma)\pten \ell_q(\Gamma))^\ast}$ we have
$$
	\big|\{(\gamma,\gamma')\in \Gamma \times \Gamma : \, |S(e_\gamma \otimes \tilde{e_{\gamma'}})| > \varepsilon \}\big|<n.
$$
Suppose by contradiction that there exists $\varepsilon>0$ such that for each $n\in\mathbb{N}$ there is 
$S^n\in B_{(\ell_p(\Gamma)\pten \ell_q(\Gamma))^\ast}$ in such a way that the set 
$$
	\Omega_n:=\{(\gamma,\gamma')\in \Gamma \times \Gamma : \, |S^n(e_\gamma\otimes \tilde{e_{\gamma'}})| > \varepsilon \}
$$
has cardinality $|\Omega_n|\geq n$. Given any $n\in \N$, 
write $T_n:=S_{\ell_p(\Gamma)}^n \in B_{\mathcal{L}(\ell_p(\Gamma),\ell_{q^*}(\Gamma))}$ and notice that for each $\gamma\in \Gamma$ 
the set $\{\gamma'\in \Gamma: (\gamma,\gamma')\in \Omega_n\}$
has cardinality $\leq \varepsilon^{-q^*}$, because
$$
	1\geq \|T_n(e_\gamma)\|^{q^*}\geq \sum_{\gamma'\in \Gamma: \, (\gamma,\gamma')\in \Omega_n}|T_n(e_\gamma)(\tilde{e_{\gamma'}})|^{q^*}
	\geq \epsilon^{q^*} |\{\gamma'\in \Gamma: (\gamma,\gamma')\in \Omega_n\}|.
$$
Similarly, for each $\gamma'\in \Gamma$ 
the set $\{\gamma\in \Gamma: (\gamma,\gamma')\in \Omega_n\}$
has cardinality $\leq \varepsilon^{-p^*}$.

Choose $t\in \N$ large enough such that $t\geq \varepsilon^{-p^*}+ \varepsilon^{-q^*}$.
Fix $m\in \N$ and write $n:= mt$. By Lemma~\ref{lem:cansino},
we can find sets $\{i_1,\ldots,i_m\}\sub \Gamma$ and $\{j_1,\ldots,j_m\}\sub\Gamma$, both of cardinality~$m$, such that 
\begin{equation}\label{eqn:IJ}
	|T_n(e_{i_k})(\tilde{e_{j_k}})| > \varepsilon
	\quad \text{for all $k\in\{1,\dots,m\}$}. 
\end{equation}

Let $T:\mathbb{R}^m \to \mathbb{R}^m$ be the linear map whose matrix is 
$$
	\left(
	\begin{array}{cccc} 
	T_n(e_{i_1})(\tilde{e_{j_1}}) & T_n(e_{i_2})(\tilde{e_{j_1}}) & \dots & T_n(e_{i_m})(\tilde{e_{j_1}}) \\ 
	T_n(e_{i_1})(\tilde{e_{j_2}}) & T_n(e_{i_2})(\tilde{e_{j_2}}) & \dots & T_n(e_{i_m})(\tilde{e_{j_2}}) \\
	\vdots & \vdots & \ddots &\vdots \\
	T_n(e_{i_1})(\tilde{e_{j_m}}) & T_n(e_{i_2})(\tilde{e_{j_m}}) & \dots & T_n(e_{i_m})(\tilde{e_{j_m}})
	\end{array}
	\right). 
$$
Then $T$ can be seen as an operator from $\ell^m_\infty$ to $\ell^m_1$ and also as an operator from~$\ell^m_p$ to~$\ell^m_{q^*}$.
Notice that $\|T\|_{\mathcal{L}(\ell^m_p,\ell^m_{q^*})}\leq 1$, because it factors as
$$
	\xymatrix@R=3pc@C=3pc{\ell^m_p
	\ar[r]^{T} \ar[d]_{u} & \ell^m_{q^*}\\
	\ell_p(\Gamma)  \ar[r]^{T_n}  & \ell_{q^*}(\Gamma) \ar[u]_{\pi} \\
	}
$$
where $u$ and $\pi$ are (up to the natural isometric isomorphisms) 
the inclusion of $\overline{{\rm span }}(\{e_{i_1}, \ldots, e_{i_m}\})$ 
and the projection onto $\overline{{\rm span }}(\{\hat{e_{j_1}}, \ldots, \hat{e_{j_m}}\})$, respectively
(here $\{\hat{e_\gamma}:\gamma\in \Gamma\}$ denotes the canonical basis of~$\ell_{q^*}(\Gamma)$).
Since the identity operator $\ell^m_\infty \to \ell^m_p$ has norm $\leq m^{1/p}$
and the identity operator $\ell^m_{q^*} \to \ell_1^m$ has norm $\leq m^{1/q}$ (as an application of Hölder's inequality),
we conclude that
\begin{equation}\label{eqn:norm01}
	\|T\|_{\mathcal{L}(\ell^m_{\infty},\ell^m_{1})} \leq m^{1/p+1/q}.
\end{equation}
		
Grothendieck's inequality (see, e.g., \cite[1.14]{die-alt}) applied to the matrix above yields
$$
	\left|\sum_{k=1}^m \sum_{k'=1}^m T_n(e_{i_k})(e_{j_{k'}}) \langle u_{k},v_{k'}\rangle\right| \leq K_G \|T\|_{\mathcal{L}(\ell^m_{\infty},\ell^m_{1})}
$$
for any vectors $u_{1},\dots, u_{m}, v_{1},\dots,v_{m}$ in the closed unit ball of a given Hilbert space, $K_G$ being Grothendieck's constant. 
In particular, if $\{u_{1},\dots, u_{m}\}$ is chosen to be any orthonormal basis of $\ell_2^m$ and we take
$v_{k}:={\rm sign}(T_n(e_{i_k})(e_{j_{k}}))u_{k}$ for all $k\in\{1,\dots,m\}$, then the previous inequality, \eqref{eqn:IJ} and~\eqref{eqn:norm01} give
$$
	m\epsilon < \sum_{k=1}^m \big|T_n(e_{i_k})(e_{j_{k}})\big| \leq K_G \, m^{1/p+1/q}.
$$
Therefore $\varepsilon< K_G \, m^{1/p+1/q-1}$.
This inequality holds for all $m\in \N$, thus contradicting that $1/p+1/q<1$. The proof is finished.
\end{proof}

\begin{rem}\label{rem:HG}
\rm The proof of Theorem~\ref{theo:lp-pten-lq-sHG} shows that 
condition~(ii) of Theorem~\ref{theo:sHG} is satisfied without passing to a countable decomposition of 
$\{e_\gamma\otimes \tilde{e_{\gamma'}}: (\gamma,\gamma')\in \Gamma\times \Gamma\}$.
\end{rem}

\begin{cor}\label{cor:c0-pten-c0-sHG}
If $\Gamma$ and $\Delta$ are non-empty sets and $1<q<\infty$, then $c_0(\Gamma)\pten c_0(\Delta)$ and $c_0(\Gamma)\pten \ell_q(\Delta)$ 
are subspaces of Hilbert generated spaces.
\end{cor}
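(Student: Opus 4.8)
The plan is to run the proof of Theorem~\ref{theo:lp-pten-lq-sHG} again, now viewing $c_0$ as the ``$p=\infty$'' endpoint of the scale of $\ell_p$ spaces. In either case I take $G:=\{e_\gamma\otimes \tilde{e}_{\gamma'}:(\gamma,\gamma')\in\Gamma\times\Delta\}$, where $\{e_\gamma\}$ is the canonical basis of $c_0(\Gamma)$ and $\{\tilde{e}_{\gamma'}\}$ is the canonical basis of $c_0(\Delta)$ (resp.\ of $\ell_q(\Delta)$); each $e_\gamma\otimes\tilde{e}_{\gamma'}$ has projective norm $1$ and $\overline{\mathrm{span}}(G)$ is the whole space. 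By Theorem~\ref{theo:sHG} together with Remark~\ref{rem:HG}, it then suffices to show that for every $\varepsilon>0$ there is $n\in\N$ such that $|\{(\gamma,\gamma'):|S(e_\gamma\otimes\tilde{e}_{\gamma'})|>\varepsilon\}|<n$ for every norm-one functional $S$ on the tensor product, and I argue by contradiction, assuming functionals $S^n$ whose exceptional sets $\Omega_n$ satisfy $|\Omega_n|\ge n$.

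First I would record the row/column sparsity of $\Omega_n$, which is where the duality $c_0(\cdot)^*=\ell_1(\cdot)$ enters. For $c_0(\Gamma)\pten\ell_q(\Delta)$, writing $T:=S^n_{c_0(\Gamma)}\in\mathcal{L}(c_0(\Gamma),\ell_{q^*}(\Delta))$ with $\|T\|\le1$, each fixed $\gamma$ admits at most $\varepsilon^{-q^*}$ indices $\gamma'$ with $|T(e_\gamma)(\tilde{e}_{\gamma'})|>\varepsilon$ (since $\|T(e_\gamma)\|_{q^*}\le1$), while each fixed $\gamma'$ admits at most $\varepsilon^{-1}$ such indices $\gamma$ (since $S^n_{\ell_q(\Delta)}(\tilde{e}_{\gamma'})\in\ell_1(\Gamma)$ has norm $\le1$). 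For $c_0(\Gamma)\pten c_0(\Delta)$ the target of $T$ is $\ell_1(\Delta)$ and both bounds become $\varepsilon^{-1}$. Choosing $t\in\N$ with $t\ge\varepsilon^{-q^*}+\varepsilon^{-1}$ (resp.\ $t\ge2\varepsilon^{-1}$) and setting $n:=mt$, Lemma~\ref{lem:cansino} yields sets $\{i_1,\dots,i_m\}\subseteq\Gamma$ and $\{j_1,\dots,j_m\}\subseteq\Delta$ of cardinality $m$ with $|T(e_{i_k})(\tilde{e}_{j_k})|>\varepsilon$ for all $k$.

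The crux is to bound the norm of the associated $m\times m$ matrix $[T(e_{i_k})(\tilde{e}_{j_{k'}})]$ as an operator $\ell^m_\infty\to\ell^m_1$, and it is precisely the $c_0$ endpoint that removes the $m^{1/p}$ factor appearing in the reflexive case. Indeed, finitely many basis vectors span an isometric copy of $\ell^m_\infty$ inside $c_0(\Gamma)$, so the inclusion $\ell^m_\infty\to c_0(\Gamma)$ sending the unit vectors to $e_{i_1},\dots,e_{i_m}$ is an isometry (i.e.\ the exponent $1/p=0$). Composing this isometry with $T$ and with the norm-one coordinate projection onto $\{j_1,\dots,j_m\}$, and then using the identity $\ell^m_{q^*}\to\ell^m_1$ of norm $m^{1/q}$ in the $\ell_q$ case (no such step is needed in the $c_0\pten c_0$ case, where the matrix already lands in $\ell^m_1$), I get $\|[T(e_{i_k})(\tilde{e}_{j_{k'}})]\|_{\mathcal{L}(\ell^m_\infty,\ell^m_1)}\le m^{1/q}$ (resp.\ $\le1$).

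Finally, Grothendieck's inequality, applied exactly as in the proof of Theorem~\ref{theo:lp-pten-lq-sHG} with an orthonormal system and signs $\mathrm{sign}(T(e_{i_k})(\tilde{e}_{j_k}))$, gives $m\varepsilon<\sum_{k=1}^m|T(e_{i_k})(\tilde{e}_{j_k})|\le K_G\,m^{1/q}$ (resp.\ $\le K_G$), whence $\varepsilon<K_G\,m^{1/q-1}$ (resp.\ $\varepsilon<K_G/m$). Since $1/q<1$, letting $m\to\infty$ produces the desired contradiction. The main obstacle is really just bookkeeping: one must attach the exponent $1$ to the correct factor (it comes from the $\ell_1$-duality of the $c_0$ side) and verify that the isometric embedding $\ell^m_\infty\to c_0$ kills any $m^{1/p}$ blow-up; once this is in place, every remaining estimate is identical to the reflexive computation already carried out.
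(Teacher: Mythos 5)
Your proof is correct, but it takes a genuinely different route from the paper. The paper disposes of the corollary in two lines: pick $1<p<\infty$ with $1/p+1/q<1$; the formal inclusion of $\ell_p(\Gamma)\pten \ell_q(\Delta)$ into $c_0(\Gamma)\pten \ell_q(\Delta)$ (resp.\ into $c_0(\Gamma)\pten c_0(\Delta)$) is a norm-one operator with dense range, so its adjoint is injective and $w^*$-to-$w^*$ continuous, hence embeds the dual ball homeomorphically into the dual ball of $\ell_p(\Gamma)\pten\ell_q(\Delta)$, which is uniform Eberlein by Theorem~\ref{theo:lp-pten-lq-sHG}; since uniform Eberlein compactness passes to compact subspaces, the characterization recalled in Subsection~\ref{subsection:WCG} finishes the argument. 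You instead rerun the entire combinatorial and Grothendieck-inequality computation of Theorem~\ref{theo:lp-pten-lq-sHG} at the ``$p=\infty$'' endpoint, and all your estimates check out: the $\ell_1$-duality of $c_0$ supplies the column bound $\varepsilon^{-1}$ needed for Lemma~\ref{lem:cansino}, the isometric copy of $\ell_\infty^m$ spanned by finitely many basis vectors of $c_0(\Gamma)$ correctly removes the $m^{1/p}$ factor from the $\mathcal{L}(\ell_\infty^m,\ell_1^m)$ norm estimate, and the resulting inequalities $\varepsilon<K_G\,m^{1/q-1}$ (resp.\ $\varepsilon<K_G/m$) give the contradiction since $1/q<1$. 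Your version is longer but self-contained and yields the slightly stronger conclusion analogous to Remark~\ref{rem:HG} (a single uniform bound $n(\varepsilon)$ for the canonical generating set, with no countable decomposition); the paper's version is shorter and exploits the useful transfer principle that the property of being a subspace of a Hilbert generated space is inherited through adjoints of dense-range operators.
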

\begin{proof}
Take $1<p<\infty$ such that $1/p+1/q<1$. The formal inclusion
from $\ell_p(\Gamma)\pten \ell_q(\Delta)$ into $X:=c_0(\Gamma)\pten c_0(\Delta)$
(resp., $Y:=c_0(\Gamma)\pten \ell_q(\Delta)$) is a norm~$1$ operator with dense range, so its adjoint is injective and
gives a homeomorphic embedding of $(B_{X^*},w^*)$ (resp., $(B_{Y^*},w^*)$) into the uniform Eberlein compact
space $(B_{(\ell_p(\Gamma)\pten \ell_q(\Delta))^*},w^*)$. 
\end{proof}

The space $L_p(\mu)$, for a finite measure~$\mu$ and $1\leq p<\infty$, is subspace
of a Hilbert generated space (see Subsection~\ref{subsection:WCG}). Given any Banach space~$Y$,
the projective tensor product $L_1(\mu)\pten Y$ coincides with the Lebesgue-Bochner space $L_1(\mu,Y)$
(see, e.g., \cite[Section~2.3]{rya}), which is easily seen to be Hilbert generated (resp., subspace of a Hilbert generated space) whenever~$Y$ is.
The case $1<p<\infty$ is different:

\begin{rem}\label{rem:Lp-sHG}
\rm Let $1<p,q<\infty$ and let $\mu$ and $\nu$ be finite measures of uncountable Maharam type.
Then $L_p(\mu)\pten L_q(\nu)$ is not subspace of a Hilbert generated space.
Indeed, since the spaces $L_p(\mu)$ and $L_q(\nu)$ are non-separable, 
each contains a subspace isomorphic to~$\ell_2(\omega_1)$ 
(see, e.g., \cite[pp. 127-128, Theorems~9 and~12]{lac-J}), which can be seen to be complemented
like in the separable case (see, e.g., the proof of Theorem~4.53 in~\cite{fab-ultimo}).
Therefore, $L_p(\mu)\pten L_q(\nu)$ contains a complemented subspace isomorphic to $\ell_2(\omega_1)\pten \ell_2(\omega_1)$
(see, e.g., \cite[Proposition~2.4]{rya}). 
From Proposition~\ref{pro:embedding-lp-lq} it follows that $L_p(\mu)\pten L_q(\nu)$ 
contains a subspace isomorphic to~$\ell_1(\omega_1)$ and so it even fails property~(C).
\end{rem}

\section{Topological properties in injective tensor products}\label{section:WLDiten}

Some of the Banach space properties that we already considered are known to be stable by taking injective tensor products. 
Namely, given two Banach spaces $X$ and~$Y$, their injective tensor product $X\iten Y$
is WCG or subspace of a WCG space if (and only if) both $X$ and $Y$ have the corresponding property (see \cite[Section~2]{rue-wer}).
Indeed, the basic idea is to consider the natural isometric embedding 
$$
	X\iten Y
	\hookrightarrow C(K)
$$ 
where $K:=B_{X^*}\times B_{Y^*}$ is equipped with the product of the weak$^*$-topologies. Then the usual characterization
of relative weak compactness in $C(K)$ via pointwise convergence (see, e.g., \cite[Corollary~3.138]{fab-ultimo})
applies to conclude that 
{\em $W_1\otimes W_2$ is relatively weakly compact in $X\iten Y$ whenever $W_X \sub X$ and $W_Y \sub Y$ are relatively weakly compact.}
From this it follows at once that $X\iten Y$ is WCG whenever $X$ and $Y$ are WCG. If we only assume that $X$ and $Y$ are 
subspaces of WCG spaces, then $(B_{X^*},w^*)$ and $(B_{Y^*},w^*)$ are Eberlein and so the same holds for~$K$, hence
$C(K)$ is a WCG space (see, e.g., \cite[Theorem~14.9]{fab-ultimo}) containing $X\iten Y$ as a subspace.  A similar argument yields:

\begin{pro}\label{pro:sHG-iten}
Let $X$ and $Y$ be Banach spaces. Then $X\iten Y$ is subspace of a Hilbert generated space if and only if 
$X$ and $Y$ are subspaces of Hilbert generated spaces.
\end{pro}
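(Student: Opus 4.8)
The plan is to follow the template used just above for the WCG case, relying throughout on the characterization recalled in Subsection~\ref{subsection:WCG}: a Banach space $Z$ is subspace of a Hilbert generated space if and only if $(B_{Z^*},w^*)$ is uniform Eberlein compact. Recall also that this class of compacta is stable under (countable, hence finite) products by \cite[Theorem~3.6]{wag}, and that being subspace of a Hilbert generated space is inherited by subspaces (a subspace of a subspace of a Hilbert generated space is again such a subspace).

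For the \emph{only if} direction I would use the isometric embeddings of the factors into the tensor product. Assuming $X,Y\neq\{0\}$ (the degenerate cases being trivial), fix $y_0\in Y$ with $\|y_0\|=1$; then $x\mapsto x\otimes y_0$ is an isometric embedding of $X$ into $X\iten Y$, since $\|x\otimes y_0\|=\|x\|\,\|y_0\|=\|x\|$, and symmetrically $Y$ embeds isometrically into $X\iten Y$. Hence, if $X\iten Y$ is subspace of a Hilbert generated space, heredity forces both $X$ and $Y$ to be so as well.

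For the \emph{if} direction, suppose $X$ and $Y$ are subspaces of Hilbert generated spaces, so that $(B_{X^*},w^*)$ and $(B_{Y^*},w^*)$ are uniform Eberlein. As in the WCG discussion, I would consider the natural isometric embedding $X\iten Y\hookrightarrow C(K)$, where $K:=B_{X^*}\times B_{Y^*}$ carries the product of the weak$^*$-topologies. By product stability, $K$ is uniform Eberlein, and since the target property passes to subspaces, it suffices to show that $C(K)$ is subspace of a Hilbert generated space.

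This last reduction is the crux, and is the uniform Eberlein analogue of the fact (invoked above via \cite[Theorem~14.9]{fab-ultimo} in the WCG setting) that $C(K)$ is WCG whenever $K$ is Eberlein. Concretely, one must verify that $(B_{C(K)^*},w^*)$ — equivalently, the space $P(K)$ of regular Borel probability measures on~$K$ — is uniform Eberlein whenever $K$ is. This is the main obstacle, and I would dispose of it by quoting the corresponding known result from the literature (the uniform Eberlein version of Amir--Lindenstrauss, see, e.g., \cite{fab-alt-JJ} and the references therein). Granting it, $C(K)$ is subspace of a Hilbert generated space, and therefore so is its subspace $X\iten Y$, which completes the argument.
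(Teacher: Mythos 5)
Your proposal is correct and follows essentially the same route as the paper: embed $X\iten Y$ isometrically into $C(B_{X^*}\times B_{Y^*})$, use that a product of two uniform Eberlein compacta is uniform Eberlein, and invoke the known fact that $C(K)$ is Hilbert generated for $K$ uniform Eberlein (the paper cites \cite[Theorem~14.15]{fab-ultimo} for exactly the step you flag as the crux). The converse via the isometric embeddings $x\mapsto x\otimes y_0$ and heredity is the standard argument the paper leaves implicit.
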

\begin{proof}
Bear in mind that the product of two uniform Eberlein compact spaces is uniform Eberlein and that
$C(K)$ is Hilbert generated whenever $K$ is a uniform Eberlein compact space (see, e.g., \cite[Theorem~14.15]{fab-ultimo}).
\end{proof}

We next analyze Corson's property~(C) and the property of being WLD in injective tensor products, for which another approach is needed.

\begin{proposition}\label{prop:condineceinjewld}
Let $X$ and $Y$ be Banach spaces such that $X\iten Y$ has property~(C). The following statements hold: 
\begin{enumerate}
\item[(i)] The range of every element of $\mathcal{I}(X,Y^*)$ (resp., $\mathcal{I}(Y,X^*)$) is contained in a $w^*$-separable subset of~$Y^*$ (resp., $X^*$).
\item[(ii)] If $X$ has the $\lambda$-BSAP for some $\lambda\geq 1$, then $\mathcal{I}(X,Y^*) \sub \mathcal{S}(X,Y^*)$.  
\end{enumerate}
\end{proposition}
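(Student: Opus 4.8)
The plan is to work throughout with the identifications recalled in Subsection~\ref{subsection:iten}: we have $(X\iten Y)^*=\mathcal{I}(X,Y^*)$ (equivalently $\mathcal{I}(Y,X^*)$), the $w^*$-topology agrees with the W$^*$OT on $\|\cdot\|_{{\rm int}}$-bounded sets, and property~(C) of $X\iten Y$ is, by Pol's characterization, exactly convex countable tightness of $V:=B_{\mathcal{I}(X,Y^*)}$ for the W$^*$OT. Observe that $V$ is convex and, by the ideal property of Pietsch integral operators ($\|T\circ S\|_{{\rm int}}\leq \|T\|_{{\rm int}}\|S\|$), it satisfies $T\circ S\in V$ whenever $T\in V$ and $S\in B_{\mathcal{L}(X)}$; hence $V$ fulfils hypotheses~(i) and~(ii) of Theorem~\ref{theo:LEMMA}. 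Part~(ii) will then follow almost formally, while part~(i) needs an argument that dispenses with any approximation assumption on~$X$.

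For part~(i), I would fix $T\in\mathcal{I}(X,Y^*)$ with $\|T\|_{{\rm int}}\leq 1$ and factor it as $T=V_0\circ I\circ U$ through the formal inclusion $I\colon L_\infty(\mu)\to L_1(\mu)$, with $\|U\|\,\|V_0\|\,\mu(\Omega)\leq 1+\epsilon$. The key idea is to replace the $\lambda$-BSAP-approximation of the identity on~$X$ used in Theorem~\ref{theo:LEMMA} by the intrinsic finite-rank approximation of the identity on~$L_1(\mu)$: if $E_{\mathcal{F}}\colon L_1(\mu)\to L_1(\mu)$ denotes the conditional expectation onto a finitely generated sub-$\sigma$-algebra~$\mathcal{F}$, then $(E_{\mathcal{F}})_{\mathcal{F}}$ is SOT-convergent to the identity and each $E_{\mathcal{F}}$ is a finite-rank contraction. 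Setting $T_{\mathcal{F}}:=V_0\circ E_{\mathcal{F}}\circ I\circ U$, this is a finite-rank operator that still factors through~$I$, so $\|T_{\mathcal{F}}\|_{{\rm int}}\leq 1+\epsilon$ (here $\|E_{\mathcal{F}}\|_{L_1\to L_1}\leq 1$ is crucial), and $T_{\mathcal{F}}\to T$ in SOT, hence in W$^*$OT. Applying convex countable tightness of the convex set $(1+\epsilon)V$ to the convex hull of $\{T_{\mathcal{F}}\}_{\mathcal{F}}$ yields a countable family $D_0$ of finite-rank operators with $T\in\overline{D_0}^{\text{W$^*$OT}}$. The norm-closed span~$Z$ of the (countably many, finite-dimensional) ranges of the members of~$D_0$ is norm separable, and W$^*$OT-convergence to~$T$ of a net drawn from~$D_0$ forces $T(x)\in\overline{Z}^{w^*}$ for every $x\in X$; thus $T(X)\sub\overline{Z}^{w^*}$, which is $w^*$-separable. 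The assertion for $\mathcal{I}(Y,X^*)$ follows by symmetry, since $X\iten Y=Y\iten X$.

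For part~(ii), I would invoke Theorem~\ref{theo:LEMMA}(b) directly with $V=B_{\mathcal{I}(X,Y^*)}$. Its three structural hypotheses hold by the first paragraph, and $X$ has the $\lambda$-BSAP by assumption; it remains to verify the extra hypothesis $V\sub\mathcal{W}(X,Y^*)$, that is, that every Pietsch integral operator is weakly compact. This is immediate from the factorization $T=V_0\circ I\circ U$ together with the weak compactness of $I\colon L_\infty(\mu)\to L_1(\mu)$, whose unit-ball image $\{f\in L_1(\mu):|f|\leq 1\}$ is uniformly integrable and hence relatively weakly compact by the Dunford-Pettis criterion. Theorem~\ref{theo:LEMMA}(b) then gives $V\sub\mathcal{S}(X,Y^*)$, and scaling yields $\mathcal{I}(X,Y^*)\sub\mathcal{S}(X,Y^*)$.

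The hard part will be part~(i): the delicate point is producing separable-range approximants of~$T$ that remain inside a \emph{fixed} integral-norm ball, \emph{without} assuming any approximation property on~$X$. The conditional-expectation device resolves this, but two verifications require care. First, one must keep $\|T_{\mathcal{F}}\|_{{\rm int}}$ bounded, which hinges on the contractivity of $E_{\mathcal{F}}$ on~$L_1$ and on the stability of the integral norm under post-composition. Second, one must pass from $T\in\overline{D_0}^{\text{W$^*$OT}}$ to the inclusion $T(X)\sub\overline{Z}^{w^*}$; here it is essential to take $Z$ \emph{norm}-separable (rather than merely $w^*$-separable), so that its $w^*$-closure is again $w^*$-separable and genuinely traps the whole range of~$T$.
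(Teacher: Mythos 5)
Your proof of part~(ii) is exactly the paper's: both invoke Theorem~\ref{theo:LEMMA}(b) with $V=B_{(X\iten Y)^*}=B_{\mathcal{I}(X,Y^*)}$, after noting the ideal property of the integral norm and the weak compactness of Pietsch integral operators. For part~(i), however, you take a genuinely different and, as far as I can tell, correct route. The paper argues globally: by the bipolar theorem, $B_{(X\iten Y)^*}$ is the $w^*$-closed convex hull of the elementary tensors $x^*\otimes y^*$ with $(x^*,y^*)\in B_{X^*}\times B_{Y^*}$, so property~(C) extracts countable sets $A_1\sub B_{X^*}$ and $A_2\sub B_{Y^*}$ with $T\in\overline{{\rm co}}^{w^*}\{x^*\otimes y^*:(x^*,y^*)\in A_1\times A_2\}$, and the range of any W$^*$OT-limit of convex combinations of the rank-one operators $x\mapsto x^*(x)y^*$ is then trapped in $\overline{{\rm span}}^{w^*}(A_2)$ --- two lines once the description of the dual ball is accepted. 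You instead work locally from the Pietsch factorization $T=V_0\circ I\circ U$ of a single operator and manufacture finite-rank approximants $V_0\circ E_{\mathcal{F}}\circ I\circ U$ via conditional expectations onto finitely generated sub-$\sigma$-algebras; the contractivity of $E_{\mathcal{F}}$ on $L_1(\mu)$ keeps these in a fixed integral-norm ball, so convex countable tightness applies, and the countably many finite-dimensional ranges give the $w^*$-separable trap. Your verifications (SOT-convergence of the conditional-expectation net, stability of $\|\cdot\|_{\rm int}$ under post-composition, passage from W$^*$OT-approximation by $D_0$ to $T(X)\sub\overline{Z}^{w^*}$ with $Z$ norm-separable) are all sound. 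What each approach buys: the paper's is shorter and handles all $T$ at once through the extreme-point description of $B_{(X\iten Y)^*}$; yours avoids that description entirely and makes explicit how the $\lambda$-BSAP hypothesis of Theorem~\ref{theo:LEMMA} can be replaced by intrinsic finite-rank approximation inside $L_1(\mu)$, which is a point of independent interest.
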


\begin{proof} We identify $(X\iten Y)^*$ with $\mathcal{I}(X,Y^*)$ (resp., $\mathcal{I}(Y,X^*)$) as in Subsection~\ref{subsection:iten}.

(i) Take $T\in \mathcal{I}(X,Y^*)$ with $\|T\|_{{\rm int}}=1$, so that
$$
	T\in B_{(X\iten Y)^*}=\overline{{\rm co}}^{w^*}\big(\{x^*\otimes y^*: \, (x^*,y^*)\in B_{X^*}\times B_{Y^*}\}\big).
$$
Since $X\iten Y$ has property~(C), there exist countable sets $A_1 \subseteq B_{X^*}$ and $A_2 \sub B_{Y^*}$ such that
$$
	T\in \overline{{\rm co}}^{w^*}\big(\{x^*\otimes y^*: \, (x^*,y^*)\in A_1\times A_2\}\big).
$$
For each $(x^*,y^*)\in A_1\times A_2$, the functional $x^*\otimes y^*$ is identified with the operator from $X$ to~$Y^*$
acting as $(x^*\otimes y^*)(x):=x^*(x) y^*$ for all $x\in X$. If $A$ denotes the set of such operators, then
$T$ belongs to the W$^*$OT-closure of ${\rm co}(A)$ in $\mathcal{L}(X,Y^*)$.
Clearly, this implies that $T(X)$ is contained in the $w^*$-separable set $\overline{{\rm span}}^{w^*} (A_2) \sub Y^*$.

(ii) is immediate from Theorem~\ref{theo:LEMMA}(b) applied to $V=B_{(X\iten Y)^*}$, 
bearing in mind that any Pietsch integral operator is weakly compact (see, e.g., \cite[Proposition~3.20]{rya}).
\end{proof}

\begin{theorem}\label{theo:caraWLDinject}
Let $X$ and $Y$ be Banach spaces. The following statements are equivalent:
\begin{enumerate}
\item[(i)] $X\iten Y$ is WLD.
\item[(ii)] $X$ and $Y$ are WLD, $\mathcal{I}(X,Y^*) \sub \mathcal{S}(X,Y^*)$ and $\mathcal{I}(Y,X^*) \sub \mathcal{S}(Y,X^*)$.
\end{enumerate}
\end{theorem}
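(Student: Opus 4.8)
The plan is to mirror the proof of the projective analogue (Theorem~\ref{theorem:caractWLD}), with Proposition~\ref{prop:condineceinjewld} now playing the role that Corollary~\ref{cor:SCPcct} played there, and using the identification $(X\iten Y)^*=\mathcal{I}(X,Y^*)=\mathcal{I}(Y,X^*)$ from Subsection~\ref{subsection:iten}. I may assume $X\neq\{0\}\neq Y$, since otherwise $X\iten Y=\{0\}$ and the equivalence is vacuous.

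For (i)$\impli$(ii), I would first note that both $X$ and $Y$ embed isometrically into $X\iten Y$: fixing $y_0\in Y$ with $\|y_0\|=1$, the map $x\mapsto x\otimes y_0$ is an isometry (the injective norm of $x\otimes y_0$ equals $\|x\|\,\|y_0\|=\|x\|$), and its image is a closed subspace; symmetrically for~$Y$. Since the class of WLD spaces is hereditary (see Subsection~\ref{subsection:WLD}), $X$ and $Y$ are WLD. Next, WLD spaces have property~(C) and the $1$-BSAP (combine \cite[Theorem~3.42]{fab-alt-JJ} with Lemma~\ref{lem:SCPidentity}); in particular $X\iten Y$ has property~(C) and $X$ has the $1$-BSAP, so Proposition~\ref{prop:condineceinjewld}(ii) gives $\mathcal{I}(X,Y^*)\sub\mathcal{S}(X,Y^*)$. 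Applying the same reasoning to the isometric identification $Y\iten X=X\iten Y$ yields $\mathcal{I}(Y,X^*)\sub\mathcal{S}(Y,X^*)$, which completes~(ii).

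For (ii)$\impli$(i), I would reproduce the construction from the proof of Theorem~\ref{theorem:caractWLD}. As $X$ and $Y$ are WLD they admit Markushevich bases $\{(x_i,x_i^*):i\in I\}$ and $\{(y_j,y_j^*):j\in J\}$, which can be taken with $\|x_i\|\le 1$ and $\|y_j\|\le 1$ and with $\{i:x^*(x_i)\neq 0\}$, $\{j:y^*(y_j)\neq 0\}$ countable for all $x^*\in X^*$, $y^*\in Y^*$. Setting $\Gamma_X:=\{x_i:i\in I\}$, $\Gamma_Y:=\{y_j:j\in J\}$ and $\Gamma:=\Gamma_X\times\Gamma_Y$, I would consider the map $\Phi(S):=(S(x,y))_{(x,y)\in\Gamma}$ of Lemma~\ref{lem:for2tensors}, restricted to $(X\iten Y)^*\sub\mathcal{B}(X,Y)$. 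Each coordinate $S\mapsto S(x_i,y_j)=\langle S,x_i\otimes y_j\rangle$ is evaluation against an element of the predual $X\otimes Y\sub X\iten Y$, hence $w^*$-continuous for $\sigma((X\iten Y)^*,X\iten Y)$; together with $\|x_i\|,\|y_j\|\le 1$ this makes $\Phi$ a bounded, injective, $w^*$-to-$\tau_p(\Gamma)$ continuous operator. The crux is to check that $\Phi(S)\in\ell_\infty^c(\Gamma)$ for every $S$. Here $S_X\in\mathcal{I}(X,Y^*)\sub\mathcal{S}(X,Y^*)$ has separable range, so $S_X(X)\sub\overline{C}^{w^*}$ for some countable $C=\{y_n^*:n\in\N\}\sub Y^*$; since each $\{j:y_n^*(y_j)\neq 0\}$ is countable, $S(x,y_j)=\langle S_X(x),y_j\rangle=0$ for all $x\in X$ once $y_j$ lies outside a fixed countable subset of~$\Gamma_Y$. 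The symmetric argument applied to $S_Y\in\mathcal{I}(Y,X^*)\sub\mathcal{S}(Y,X^*)$ controls the other coordinate, so $\Phi(S)$ is supported on a countable subset of~$\Gamma$, giving $\Phi\colon(X\iten Y)^*\to\ell_\infty^c(\Gamma)$ and hence that $X\iten Y$ is WLD.

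The main obstacle is bookkeeping rather than analysis: one must keep track that the relevant weak$^*$-topology is $\sigma((X\iten Y)^*,X\iten Y)$ and not the one induced from $(X\pten Y)^*$, and that the separable-range input is supplied by the \emph{integral} operators, which is exactly the content of Proposition~\ref{prop:condineceinjewld} (itself resting on the weak compactness of Pietsch integral operators, through Theorem~\ref{theo:LEMMA}(b)). Once these identifications are in place the argument runs word-for-word parallel to the projective case, and I do not expect any genuinely new difficulty to appear.
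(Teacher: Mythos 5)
Your proposal is correct and follows essentially the same route as the paper: the forward implication is obtained exactly as in the paper from the hereditariness of WLD, the fact that WLD implies property~(C) and the $1$-BSAP, and Proposition~\ref{prop:condineceinjewld}(ii); the converse reuses the map $\Phi$ from the proof of Theorem~\ref{theorem:caractWLD}, your direct verification of $w^*$-continuity of the coordinates being just a rephrasing of the paper's composition with the formal inclusion $j:(X\iten Y)^*\to(X\pten Y)^*$.
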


\begin{proof}
Since the property of being WLD passes to subspaces and implies both property (C) (see Subsection~\ref{subsection:WLD})
and the $1$-BSAP (combine \cite[Theorem~3.42]{fab-alt-JJ} and Lemma~\ref{lem:SCPidentity}), it
follows from Proposition \ref{prop:condineceinjewld}(ii) that (i)$\impli$(ii). 

The converse follows the same lines of Theorem \ref{theorem:caractWLD}. Indeed, suppose that (ii) holds
let $\Phi$ be as in the proof of Theorem~\ref{theorem:caractWLD}. Since the formal inclusion 
$$
	j: (X\iten Y)^* \to \mathcal{B}(X,Y)=(X\pten Y)^*
$$ 
is an injective $w^*$-to-$w^*$ continuous operator, the composition 
$$
	\varphi:= \Phi \circ j:  (X\iten Y)^*  \to \ell_\infty(\Gamma)
$$	
is an injective $w^*$-to-$\tau_p(\Gamma)$ continuous operator.
Moreover, $\Phi(S) \in \ell_\infty^c(\Gamma)$ for every $S\in \mathcal{B}(X,Y)$ such that $S_X$ and $S_Y$ have separable range
(see the proof of Theorem~\ref{theorem:caractWLD}). Hence $\varphi$ takes
values in~$\ell_\infty^c(\Gamma)$ and so $X\iten Y$ is WLD.
\end{proof}

\begin{cor}\label{cor:WLDitenM}
Let $X$ and $Y$ be WLD Banach spaces. If either $(B_{X^*},w^*)$ or $(B_{Y^*},w^*)$ has property~(M), then $X\iten Y$ is WLD.
\end{cor}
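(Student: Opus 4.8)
The plan is to deduce everything from the characterization of WLD injective tensor products given in Theorem~\ref{theo:caraWLDinject}. By commutativity of the injective tensor product I may assume without loss of generality that $(B_{Y^*},w^*)$ has property~(M). Since $X$ and $Y$ are already WLD, it then suffices to verify the two inclusions $\mathcal{I}(X,Y^*)\subseteq\mathcal{S}(X,Y^*)$ and $\mathcal{I}(Y,X^*)\subseteq\mathcal{S}(Y,X^*)$, after which Theorem~\ref{theo:caraWLDinject} finishes the proof.

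For the first inclusion I would take $T\in\mathcal{I}(X,Y^*)$ and use its Pietsch factorization $X\xrightarrow{U}L_\infty(\mu)\xrightarrow{I}L_1(\mu)\xrightarrow{V}Y^*$, so that $T(X)\subseteq V(L_1(\mu))$. Since $Y$ is WLD and $(B_{Y^*},w^*)$ has property~(M), Theorem~\ref{theo:JR} yields $\mathcal{L}(L_1(\mu),Y^*)=\mathcal{S}(L_1(\mu),Y^*)$; hence $V$, and therefore $T$, has separable range.

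The delicate part is the second inclusion, since property~(M) is available only for $Y^*$ and not for $X^*$, so the factorization argument above cannot be repeated verbatim. My plan is to exploit the correspondence between operators: given $T\in\mathcal{I}(Y,X^*)$, I write $T=S_Y$ for the associated integral bilinear form $S$, so that $S_X\in\mathcal{I}(X,Y^*)$ already has separable range by the first inclusion. Setting $Z:=\overline{S_X(X)}$ (a separable subspace of $Y^*$), I would factor $S_Y=(S_X)^*\circ\rho$, where $\rho\colon Y\to Z^*$ assigns to $y$ the restriction to $Z$ of its canonical image in $Y^{**}$, and $(S_X)^*\colon Z^*\to X^*$ is the adjoint of $S_X$ regarded as an operator into~$Z$. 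Since $Z$ is separable, each ball $nB_{Z^*}$ is $w^*$-compact and metrizable, whence $\rho(Y)=\bigcup_{n}n\,\rho(B_Y)$ is $w^*$-separable; as $(S_X)^*$ is $w^*$-to-$w^*$ continuous, $S_Y(Y)=(S_X)^*(\rho(Y))$ is $w^*$-separable in~$X^*$. Finally, since $X$ and $Y$ are WLD, Lemma~\ref{cor:WLD-WS} upgrades this to norm separability of $S_Y(Y)$.

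I expect the second inclusion to be the main obstacle, and the subtle point inside it is that a subset of a $w^*$-separable set need not be $w^*$-separable; this is exactly why I route the argument through the metrizable balls $nB_{Z^*}$ rather than through $Z^*$ as a whole. With both inclusions established, Theorem~\ref{theo:caraWLDinject} then gives that $X\iten Y$ is WLD.
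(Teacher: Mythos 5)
Your proof is correct. The reduction to Theorem~\ref{theo:caraWLDinject} and your first inclusion (operators into the dual whose ball has property~(M), handled by factoring through $L_1(\mu)$ and invoking Theorem~\ref{theo:JR}) coincide with one half of the paper's argument. For the other inclusion the paper proceeds differently: it invokes Pietsch's factorization (domination) theorem, which places the representing measure $\mu$ on the dual ball having property~(M); since such a $\mu$ has separable $L_1(\mu)$, every absolutely summing --- in particular every Pietsch integral --- operator out of that space has separable range. You instead transpose: knowing $S_X$ has separable range $Z$, you write $S_Y=(S_X)^*\circ\rho$ with $\rho\colon Y\to Z^*$, obtain $w^*$-separability of $S_Y(Y)$ from the $w^*$-metrizability of the balls of $Z^*$ together with the $w^*$-to-$w^*$ continuity of the adjoint, and upgrade to norm separability via Lemma~\ref{cor:WLD-WS}; your caution about subsets of $w^*$-separable sets is well placed and correctly resolved by routing through the metrizable balls $nB_{Z^*}$. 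Both routes are valid. The paper's is shorter modulo the external reference to Pietsch's theorem; yours stays entirely within the paper's own toolkit and yields a by-product worth noting: the transposition step uses only that $X$ and $Y$ are WLD and that $S_X$ has separable range (not integrality of $S$), so for WLD spaces the two inclusions $\mathcal{I}(X,Y^*)\subseteq\mathcal{S}(X,Y^*)$ and $\mathcal{I}(Y,X^*)\subseteq\mathcal{S}(Y,X^*)$ appearing in Theorem~\ref{theo:caraWLDinject}(ii) (and likewise the two equalities in Theorem~\ref{theorem:caractWLD}(ii)) are each a consequence of the other.
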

\begin{proof} Suppose, for instance, that $(B_{X^*},w^*)$ has property~(M). On the one hand, since $L_1(\mu)$ is separable for every 
regular Borel probability measure on~$(B_{X^*},w^*)$ (see Subsection~\ref{subsection:Measures}), 
we can apply Pietsch's factorization theorem (see, e.g., \cite[2.13]{die-alt}) 
to deduce that every absolutely summing operator from~$X$ to another Banach space
has separable range. In particular, $\mathcal{I}(X,Y^*) \sub \mathcal{S}(X,Y^*)$. 

On the other hand, any $T\in \mathcal{I}(Y,X^*)$ factors as
$$
	\xymatrix@R=3pc@C=3pc{Y
	\ar[r]^{T} \ar[d]_{U} & X^*\\
	L_\infty(\mu)  \ar[r]^{I}  & L_1(\mu) \ar[u]_{V} \\
	}
$$
for some finite measure~$\mu$, where $I$ is the formal inclusion operator and $U$ and $V$ are operators. Since $X$ is WLD and
$(B_{X^*},w^*)$ has property~(M), we know that $V$ has norm separable range (Theorem~\ref{theo:JR}), and so does~$T$. 
Hence $\mathcal{I}(Y,X^*) \sub \mathcal{S}(Y,X^*)$. The conclusion now follows
from Theorem~\ref{theo:caraWLDinject}.
\end{proof}

\begin{theo}\label{theo:classMS}
If $X$ is a Banach space such that $(B_{X^*},w^*)$ does not belong to the class~MS, then $\mathcal{I}(X,X^*)\not\subseteq \mathcal{S}(X,X^*)$.
\end{theo}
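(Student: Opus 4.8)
The plan is to produce a single Pietsch integral operator $T\in\mathcal{I}(X,X^*)$ whose range is not norm separable; by the identification $\mathcal{I}(X,X^*)=(X\iten X)^*$, this is exactly the content of $\mathcal{I}(X,X^*)\not\subseteq\mathcal{S}(X,X^*)$. Fix $\mu\in P(B_{X^*})$ of uncountable Maharam type, which exists because $(B_{X^*},w^*)\notin$ MS, and consider the canonical evaluation operator $R\colon X\to L_2(\mu)$, $R(x)=\hat x$, where $\hat x(x^*):=\langle x^*,x\rangle$; since $|\hat x|\le\|x\|$ on $B_{X^*}$ and $\mu$ is a probability, $\|R\|\le 1$. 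The operator I would analyze is the positive quadratic form $S:=R^*R\colon X\to X^*$, that is $S(x)(x')=\int_{B_{X^*}}\langle x^*,x\rangle\langle x^*,x'\rangle\,d\mu(x^*)$. It factors as $X\xrightarrow{U}L_\infty(\mu)\xrightarrow{I}L_1(\mu)\xrightarrow{V}X^*$ with $U(x)=\hat x$, $I$ the formal inclusion, and $V(f)(x'):=\int f\hat{x'}\,d\mu$; each factor has norm $\le 1$, so $S$ is Pietsch integral with $\|S\|_{\mathrm{int}}\le 1$, i.e.\ $S\in\mathcal{I}(X,X^*)$.

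The first substantial step is to show that $H:=\overline{R(X)}^{\,L_2(\mu)}$ is non-separable. I would argue by contrapositive: if $\{\hat{x_n}:n\in\N\}$ were dense in $H$, then every $\hat x$ (being in $H$) is an $L_2$-limit, hence an a.e.\ limit along a subsequence, of functions measurable for the countably generated $\sigma$-algebra $\Sigma_0:=\sigma(\hat{x_n}:n\in\N)$. Since the weak$^*$ topology of $B_{X^*}$ is the initial topology of the family $\{\hat x:x\in X\}$, the Baire $\sigma$-algebra of $B_{X^*}$ equals $\sigma(\hat x:x\in X)$, so the previous remark gives that it is contained, modulo $\mu$-null sets, in the completion of $\Sigma_0$. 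A countably generated measure algebra yields a separable $L_1(\mu)$, contradicting that $\mu$ has uncountable Maharam type. Thus $H$ is a non-separable Hilbert space and, in particular, contains an uncountable orthonormal family.

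To pass from non-separability of $H$ to non-separability of the range of an integral operator, I would look for an uncountable family $\{x_\gamma\}_{\gamma<\omega_1}\subseteq B_X$ whose images $\{\hat{x_\gamma}\}$ are (almost) orthonormal in $L_2(\mu)$. Given such a family, the estimate
\[
\|S(x_\gamma)-S(x_{\gamma'})\|_{X^*}\ \ge\ \big|\langle\hat{x_\gamma},\hat{x_\gamma}\rangle-\langle\hat{x_{\gamma'}},\hat{x_\gamma}\rangle\big|\ \approx\ 1\qquad(\gamma\neq\gamma'),
\]
obtained by testing $S(x_\gamma)-S(x_{\gamma'})$ against $x_\gamma\in B_X$, would exhibit an uncountable $1$-separated subset of $S(X)$, so $S$ would fail to have separable range and we would be done.

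The hard part is exactly this last step, and it is where I expect the real work to lie. Non-separability of $H$ only produces an orthonormal family in the \emph{closure} $\overline{R(X)}$; approximating such vectors by genuine $\hat x$ may force $\|x\|_X$ to blow up, and indeed for a poorly chosen $\mu$ (for instance a product measure on $B_{\ell_\infty(\omega_1)}=[-1,1]^{\omega_1}$ with rapidly decaying coordinate variances) the map $R$ sends $B_X$ to a ``thin'' subset of $H$, and $S=R^*R$ then has separable range. Hence one cannot use an arbitrary witnessing $\mu$: the genuine obstacle is to secure a bad measure whose \emph{first chaos} $\overline{R(X)}$ is non-degenerate, i.e.\ whose orthonormal vectors are realized, up to a small perturbation, by a uniformly bounded family in $X$. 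To overcome it I would invoke Maharam's classification theorem to replace $\mu$ by a homogeneous bad measure carrying an independent family $\{E_\gamma\}_{\gamma<\omega_1}$ with $\mu(E_\gamma)=\tfrac12$, and then realize the associated normalized, mutually orthogonal, uniformly bounded functions in the form $\hat{x_\gamma}$ with $x_\gamma\in B_X$; this descent of the homogeneous independent structure to bounded elements of $X$ (equivalently, the construction of a ``fat'' bad measure on $B_{X^*}$) is the crux on which the whole argument turns.
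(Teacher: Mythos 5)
Your operator is exactly the one the paper uses (there it is written $T=R\circ S$ with $S(x)=\hat x\in L_1(\mu)$, which coincides with your $R^*R$), and your argument that $H=\overline{R(X)}^{L_2(\mu)}$ is non-separable is a legitimate alternative to the paper's: the paper instead uses Stone--Weierstrass together with the norm-continuity of pointwise multiplication on uniformly bounded sets to show that $S(B_X)$ is non-separable in $L_1(\mu)$, whereas your route goes through the countably generated measure algebra. Your version works once you add the standard fact that the Maharam type of a regular Borel measure on a compact space is already computed on the Baire $\sigma$-algebra (equivalently, that $C(B_{X^*})$ is dense in $L_1(\mu)$).

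The genuine gap is in your last step, and the difficulty you locate there is illusory. You do not need an orthonormal (or almost orthonormal) family realized inside $B_X$, and you do not need to descend a Maharam-homogeneous independent family into $B_X$ --- which, as you suspect, cannot be done in general. All you need is an uncountable uniformly separated family in $R(B_X)$ itself, and this you already have: since $\overline{R(X)}=\overline{\bigcup_{n\in\N} nR(B_X)}$ is non-separable, $R(B_X)$ is a non-separable metric space, hence contains an uncountable set $\{\hat{x}_\alpha:\alpha<\omega_1\}$ with $x_\alpha\in B_X$ and $\|\hat{x}_\alpha-\hat{x}_\beta\|_{L_2(\mu)}\ge\epsilon$ for some fixed $\epsilon>0$ and all $\alpha\neq\beta$; no passage to the closure, no approximation, no blow-up of norms. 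Now the symmetry of $S=R^*R$ gives, testing against $x_\alpha-x_\beta\in 2B_X$,
$$
2\,\|S(x_\alpha)-S(x_\beta)\|\ \ge\ \big\langle S(x_\alpha-x_\beta),\,x_\alpha-x_\beta\big\rangle\ =\ \int_{B_{X^*}}(\hat{x}_\alpha-\hat{x}_\beta)^2\,d\mu\ =\ \|\hat{x}_\alpha-\hat{x}_\beta\|_{L_2(\mu)}^2\ \ge\ \epsilon^2,
$$
so $S(X)$ contains an uncountable $\epsilon^2/2$-separated set and $S\notin\mathcal{S}(X,X^*)$. This quadratic-form identity is precisely how the paper concludes. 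In particular, the scenario you fear (a ``thin'' image $R(B_X)$ making $R^*R$ have separable range) is incompatible with your own Step~1: non-separability of $H$ forces non-separability of $R(B_X)$, which by the displayed estimate forces non-separability of the range of $S$.
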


\begin{proof}  Let $\mu$ be a regular Borel probability measure
on~$(B_{X^*},w^*)$ for which $L_1(\mu)$ is not separable.

{\em Step~1}. Define $S: X \to L_1(\mu)$ by $S := u \circ i$, where $i:X \to C(B_{X^*})$ is the canonical isometric embedding
and $u: C(B_{X^*}) \to L_1(\mu)$
is the formal inclusion operator. Let $A \sub C(B_{X^*})$
be the subalgebra generated by $i(B_X)\cup \{{\bf 1}\}$ (we denote by ${\bf 1}$ the constant function taking value~$1$). 
By the Stone-Weierstrass theorem
we have $C(B_{X^*})=\overline{A}^{\|\cdot\|}$ and so $u(A)$ is dense in~$L_1(\mu)$ (bear in mind that $u$ has dense range). Hence $u(A)$
is not separable. 

Since $u(f g)=u(f) u(g)$ for every $f,g\in C(B_{X^*})$, the set $u(A)$ consists of all linear combinations 
of finite products of elements of $H:=S(B_X)\cup \{{\bf 1}\}$. The following claim allows us to conclude that $S(B_X)$
is not separable.

{\em Claim:} For each $n\in \N$, the ``multiplication'' map
$$
	\xi_n: H^n \to L_1(\mu),
	\quad
	\xi_n(f_1,\dots,f_n):=f_1\cdots f_n,
$$
is $\tau$-to-norm continuous, where $\tau$ is the product topology on~$H^n$ induced by the norm topology on each factor. 
Indeed, we will show that $\xi_n(\overline{W}^\tau) \sub \overline{\xi_n(W)}^{\|\cdot\|}$ for every
$W \sub H^n$. Fix $(f_1,\dots,f_n) \in \overline{W}^\tau \sub H^n$ and take a sequence $((f_1^k,\dots,f_n^k))_{k\in \N}$
in~$W$ which $\tau$-converges to $(f_1,\dots,f_n)$. Then there is a strictly increasing sequence $k_1<k_2<\dots$ in~$\N$ such that
for each $i\in \{1,\dots,n\}$ the subsequence $(f_i^{k_j})_{j\in \N}$ is $\mu$-a.e. convergent to~$f_i$, so $(f_1^{k_j}\cdots f_n^{k_j})_{j\in \N}$
is $\mu$-a.e. convergent to $f_1\cdots f_n$. An appeal to Lebesgue's dominated convergence theorem
ensures that $(f_1^{k_j}\cdots f_n^{k_j})_{j\in \N}$
is norm convergent to $f_1\cdots f_n$ (bear in mind that $|f_i^k|\leq 1$ $\mu$-a.e. for all $i$ and~$k$). 
This shows that $\xi_n((f_1,\dots,f_n))\in \overline{\xi_n(W)}^{\|\cdot\|}$. The claim is proved.

{\em Step~2.} Let us consider the adjoint $S^*:L_\infty(\mu) \to X^*$. For each $w^*$-Borel set $C \sub B_{X^*}$ we have
\begin{equation}\label{eqn:barycenter-simple}
	\langle S^*(\chi_C), x \rangle=\int_C S(x) \, d\mu
	\quad
	\mbox{for all }x\in X,
\end{equation}
where $\chi_C$ denotes the characteristic function of~$C$.
Since $|S(x)|\leq 1$ $\mu$-a.e. for every $x\in B_X$, equality~\eqref{eqn:barycenter-simple}
can be used to deduce that 
$$
	\|S^*(\chi_C)\| \leq \mu(C)
	\quad\text{for every $w^*$-Borel set $C \sub B_{X^*}$}.
$$ 
This inequality and the density of simple functions in~$L_1(\mu)$ allow 
to define an operator $R: L_1(\mu) \to X^*$
such that $R(\chi_C)=S^*(\chi_C)$ for every $w^*$-Borel set $C\sub B_{X^*}$. From~\eqref{eqn:barycenter-simple} we get
\begin{equation}\label{eqn:barycenter}
	\langle R(f), x \rangle=\int_{B_{X^*}} f S(x) \, d\mu
	\quad
	\mbox{for all }x\in X \mbox{ and }f\in L_1(\mu).
\end{equation}
We will check that the operator $T:=R \circ S: X\to X^*$ satisfies the required properties. Clearly, $T$
is Pietsch integral because $S$ (and so~$T$) factors through the formal inclusion operator from $L_\infty(\mu)$ to~$L_1(\mu)$. 

Observe that $S$ factors as $S=J\circ v \circ i$, where 
$$
	v:C(B_{X^*}) \to L_2(\mu) \quad\mbox{and}\quad J:L_2(\mu) \to L_1(\mu)
$$
are the formal inclusion operators. Since $S$ has non-separable range (as we showed in {\em Step~1}), the same holds for $S':=v\circ i$. 
Then there exist $\epsilon>0$ and an uncountable set $\{x_\alpha:\alpha<\omega_1\}\sub B_X$ such that 
\begin{equation}\label{eqn:separation}
	\|S'(x_\alpha-x_\beta)\|=\|S'(x_\alpha)-S'(x_\beta)\| \geq \epsilon
	\quad\mbox{whenever }\alpha \neq \beta.
\end{equation}
It follows that
\begin{multline*}
	2 \|T(x_\alpha)-T(x_\beta)\| \geq \big\langle T(x_\alpha)-T(x_\beta), x_\alpha-x_\beta \big\rangle= \big\langle R(S(x_\alpha-x_\beta)),x_\alpha-x_\beta\big\rangle
	\\ \stackrel{\eqref{eqn:barycenter}}{=}\int_{B_{X^*}} S(x_\alpha-x_\beta)^2 \, d\mu=
	\|S'(x_\alpha-x_\beta)\|^2 \stackrel{\eqref{eqn:separation}}{\geq} \epsilon^2
\end{multline*}
whenever $\alpha\neq \beta$. Therefore, $T$ has non-separable range.
\end{proof}

Recall that, if $X$ is a WLD Banach space, then $(B_{X^*},w^*)$ has property~(M)
if and only if it belongs to the class~MS (see Subsection~\ref{subsection:Measures}). Thus:

\begin{cor}\label{cor:WLDiten-noM}
Let $X$ be a Banach space. Then $X$ is WLD and $(B_{X^*},w^*)$ has property~(M) if and only if $X\iten X$ is WLD.
\end{cor}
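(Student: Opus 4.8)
The plan is to read off the corollary by assembling three results already proved in this section, namely Theorem~\ref{theo:caraWLDinject}, Corollary~\ref{cor:WLDitenM} and Theorem~\ref{theo:classMS}, together with the equivalence recorded in Subsection~\ref{subsection:Measures}: \emph{for a WLD Banach space~$X$, the Corson compact space $(B_{X^*},w^*)$ has property~(M) if and only if it belongs to the class~MS}. No new construction is needed; the work is purely in combining these facts in the two directions of the biconditional.

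For the direction asserting that ``$X$ is WLD and $(B_{X^*},w^*)$ has property~(M)'' implies ``$X\iten X$ is WLD'', I would simply invoke Corollary~\ref{cor:WLDitenM} in the special case $Y=X$: since $X$ is WLD and $(B_{X^*},w^*)$ has property~(M), that corollary gives at once that $X\iten X$ is WLD.

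For the converse I would start from the hypothesis that $X\iten X$ is WLD and apply Theorem~\ref{theo:caraWLDinject} with $Y=X$. This yields two pieces of information: first, $X$ is WLD; and second, $\mathcal{I}(X,X^*)\subseteq \mathcal{S}(X,X^*)$. The second inclusion is exactly what rules out the bad case in Theorem~\ref{theo:classMS}: taking the contrapositive of that theorem, the inclusion $\mathcal{I}(X,X^*)\subseteq \mathcal{S}(X,X^*)$ forces $(B_{X^*},w^*)$ to belong to the class~MS. Finally, since we have just established that $X$ is WLD, the equivalence from Subsection~\ref{subsection:Measures} upgrades membership in the class~MS to property~(M), so $(B_{X^*},w^*)$ has property~(M), as required.

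I do not expect a genuine obstacle, since all the heavy lifting sits in the cited results; the only point that deserves care is the last step of the converse. Theorem~\ref{theo:classMS} delivers only that $(B_{X^*},w^*)$ lies in the class~MS, which is a priori strictly weaker than property~(M) (under CH there are Corson compacta in the class~MS that fail property~(M)). What rescues the argument is precisely that Theorem~\ref{theo:caraWLDinject} also supplies the WLD-ness of~$X$, and it is under that hypothesis that MS and property~(M) coincide. Thus the crux is to notice that the WLD conclusion and the separable-range conclusion of Theorem~\ref{theo:caraWLDinject} must be used \emph{together}, the former to legitimize the equivalence and the latter to trigger Theorem~\ref{theo:classMS}.
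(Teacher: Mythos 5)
Your proof is correct and follows exactly the paper's route: Corollary~\ref{cor:WLDitenM} for the forward implication, and Theorem~\ref{theo:caraWLDinject} combined with (the contrapositive of) Theorem~\ref{theo:classMS} plus the MS/property-(M) equivalence for WLD spaces for the converse. The care you take at the last step --- noting that WLD-ness is what upgrades membership in the class~MS to property~(M) --- is precisely the point the paper flags in the sentence immediately preceding the corollary.
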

\begin{proof}
Apply Corollary~\ref{cor:WLDitenM} and Theorems~\ref{theo:caraWLDinject} and~\ref{theo:classMS}. 
\end{proof}

\begin{cor}\label{cor:PropertyCiten-MS}
Let $X$ be a Banach space having the $\lambda$-BSAP for some $\lambda\geq 1$ such that $X\iten X$ has property~(C). 
Then $(B_{X^*},w^*)$ belongs to the class~MS.  
\end{cor}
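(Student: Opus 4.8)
The plan is to obtain the corollary as a formal consequence of the two results immediately preceding it, namely Proposition~\ref{prop:condineceinjewld}(ii) and Theorem~\ref{theo:classMS}, the latter read in contrapositive form. The guiding observation is that, by Theorem~\ref{theo:classMS}, membership of $(B_{X^*},w^*)$ in the class~MS is detected by the behaviour of Pietsch integral operators from~$X$ into~$X^*$, while property~(C) of $X\iten X$ (together with the approximation hypothesis) forces exactly the operator-theoretic inclusion one needs. So the whole argument reduces to matching these two statements.

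Concretely, first I would apply Proposition~\ref{prop:condineceinjewld}(ii) with $Y=X$. Since by hypothesis $X\iten X$ has property~(C) and $X$ enjoys the $\lambda$-BSAP, that proposition yields the inclusion
\[
	\mathcal{I}(X,X^*)\subseteq \mathcal{S}(X,X^*),
\]
that is, every Pietsch integral operator from~$X$ to~$X^*$ has norm separable range. Next I would invoke Theorem~\ref{theo:classMS}: its contrapositive asserts that whenever $\mathcal{I}(X,X^*)\subseteq \mathcal{S}(X,X^*)$ holds, the compact space $(B_{X^*},w^*)$ belongs to the class~MS. Chaining this with the inclusion obtained in the previous step gives the desired conclusion at once.

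At this level there is essentially no obstacle: the corollary is a two-line combination of facts already established. The genuine content lies in the two ingredients rather than in their assembly. In particular, the substance sits in Theorem~\ref{theo:classMS}, whose proof manufactures, out of any regular Borel probability measure on $(B_{X^*},w^*)$ of uncountable Maharam type, an explicit Pietsch integral operator $T=R\circ S\colon X\to X^*$ with non-separable range; and in Proposition~\ref{prop:condineceinjewld}(ii), which transfers the convex-countable-tightness consequence of property~(C) to integral operators via the weak compactness of Pietsch integral operators and Theorem~\ref{theo:LEMMA}(b). Thus the only thing to verify in writing up the corollary is that the hypotheses of both results are met with $Y=X$, which is immediate.
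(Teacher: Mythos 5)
Your proposal is correct and coincides exactly with the paper's own proof: Proposition~\ref{prop:condineceinjewld}(ii) applied with $Y=X$ gives $\mathcal{I}(X,X^*)\subseteq\mathcal{S}(X,X^*)$, and the contrapositive of Theorem~\ref{theo:classMS} then yields that $(B_{X^*},w^*)$ belongs to the class~MS.
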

\begin{proof}
Apply Proposition~\ref{prop:condineceinjewld}(ii) and Theorem~\ref{theo:classMS}.
\end{proof}

Since $C(K)$ has the $1$-BAP for every compact space~$K$ 
and $C(K\times K)$ is isometrically isomorphic to $C(K)\iten C(K)$ (see, e.g., \cite[Sections~4.1 and~3.2]{rya}),
from the previous corollary we get the following result (see \cite[Theorem~5.6]{ple-sob}): 

\begin{cor}[Plebanek-Sobota]\label{cor:PlebanekSobota}
Let $K$ be a compact space. If $C(K \times K)$ has property~(C), then $K$ belongs to the class~MS.
\end{cor}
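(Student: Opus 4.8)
The plan is to apply Corollary~\ref{cor:PropertyCiten-MS} to the concrete Banach space $X:=C(K)$ and then transfer the class~MS property from $(B_{X^*},w^*)$ down to~$K$ itself. So the whole corollary should fall out as a specialization of the abstract result, once the hypotheses are verified and one last descent step is carried out.

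First I would check the hypotheses of Corollary~\ref{cor:PropertyCiten-MS} for $X=C(K)$. The space $C(K)$ has the $1$-BAP, and since every finite rank operator has separable range, the $1$-BAP implies the $1$-BSAP (as noted after Definition~\ref{defi:BSAP}); thus $X$ has the $\lambda$-BSAP with $\lambda=1$. Next, the canonical isometric isomorphism $C(K)\iten C(K)\cong C(K\times K)$ identifies $X\iten X$ with $C(K\times K)$, which has property~(C) by hypothesis. Corollary~\ref{cor:PropertyCiten-MS} then yields that $(B_{C(K)^*},w^*)$ belongs to the class~MS.

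It remains to deduce that $K$ itself belongs to the class~MS. Here I would use the canonical homeomorphic embedding $e\colon K\to (B_{C(K)^*},w^*)$ sending each point $t\in K$ to the evaluation functional (Dirac measure) $\delta_t$; its image $e(K)$ is compact, hence $w^*$-closed. Given $\mu\in P(K)$, the pushforward measure $e_*\mu\in P(B_{C(K)^*})$ is concentrated on~$e(K)$, and since $e$ is a homeomorphism onto its image, the spaces $L_1(\mu)$ and $L_1(e_*\mu)$ are isometrically isomorphic. As $(B_{C(K)^*},w^*)$ belongs to the class~MS, the measure $e_*\mu$ has countable Maharam type, so $L_1(e_*\mu)$---and therefore $L_1(\mu)$---is separable. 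Thus every $\mu\in P(K)$ has countable Maharam type, i.e., $K$ belongs to the class~MS.

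Since the argument is a direct specialization of Corollary~\ref{cor:PropertyCiten-MS}, I do not anticipate any serious difficulty; the only point requiring (routine) care is the final descent, namely that the class~MS is inherited by closed subspaces via the pushforward identification of the associated $L_1$ spaces.
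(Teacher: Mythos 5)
Your proposal is correct and follows the paper's own route: specialize Corollary~\ref{cor:PropertyCiten-MS} to $X=C(K)$ using the $1$-BAP of $C(K)$ and the identification $C(K\times K)\cong C(K)\iten C(K)$. The only (harmless) difference is the final descent from $(B_{C(K)^*},w^*)$ to $K$: the paper cites the equivalence from Marciszewski--Plebanek, while you prove the needed (easy) direction directly via the Dirac embedding and pushforward of measures, which is a valid routine argument.
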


It should be mentioned that a compact space~$K$ belongs to the class~MS if and only if 
$(B_{C(K)^*},w^*)$ belongs to the class~MS, see \cite[Proposition~2.4]{mar-ple-2}.

\begin{rem}\label{rem:CH}
\rm As we already mentioned in Subsection~\ref{subsection:Measures}, under CH there exist WLD Banach spaces $X$ for which $(B_{X^*},w^*)$ 
fails property~(M) (equivalently, it does not belong to the class~MS) and so $X\iten X$ does not have property~(C) by Corollary \ref{cor:PropertyCiten-MS}.
Since $(B_{X^*},w^*)$ is angelic (i.e.,  Fr\'{e}chet-Urysohn) whenever $X$ is a WLD Banach space,
this provides a (consistent) negative answer to the question raised in \cite[Problem~2.7]{rue-wer} of whether the property
of having $w^*$-angelic dual ball is preserved by the injective tensor product. 
\end{rem}

\section{Questions}\label{section:Questions}

We finish the paper with some related questions that we have been 
unable to answer. In what follows, 
$X$ and $Y$ are Banach spaces.

\begin{enumerate}
\item[(a)] Suppose that $X$ and $Y$ have property~(C) and that
\begin{equation}\label{eqn:A}
	\mathcal{L}(X,Y^*)=\mathcal{K}(X,Y^*).
\end{equation}
Does $X\pten Y$ have property~(C)? What happens if~\eqref{eqn:A} is weakened to
$$ 
	\mathcal{L}(X,Y^*)= \mathcal{S}(X,Y^*) \quad\mbox{and}\quad \mathcal{L}(Y,X^*)= \mathcal{S}(Y,X^*)?
$$
What about the particular case when either $X$ or~$Y$ is reflexive?
\item[(b)] Suppose that $X$ and $Y$ are WLD and that either $X$ or $Y$ has the Dunford-Pettis property. Is $X\pten Y$ WLD?
\item[(c)] Suppose that $X$ and $Y$ are subspaces of Hilbert generated spaces and that equality~\eqref{eqn:A} holds.
Is $X \pten Y$ subspace of a Hilbert generated space?
\item[(d)] Suppose that $X\iten Y$ is WLD. Does either $(B_{X^*},w^*)$ or $(B_{Y^*},w^*)$ have property~(M)?
\end{enumerate}

\subsection*{Acknowledgements}

The research is partially supported by grants MTM2017-86182-P 
(funded by MCIN/AEI/10.13039/501100011033 and ``ERDF A way of making Europe'') and 
20797/PI/18 (funded by {\em Fundaci\'on S\'eneca}).
A. Rueda Zoca was also partially supported by: grant PGC2018-093794-B-I00 
(funded by MCIN/AEI/10.13039/501100011033 and ``ERDF A way of making Europe''), 
grant FJC2019-039973 (funded by MCIN/AEI/10.13039/501100011033) and grants A-FQM-484-UGR18 and FQM-0185 (funded by {\em Junta de Andaluc\'{i}a}).


\bibliographystyle{amsplain}

\end{document}